\documentclass[11pt]{amsart}

\usepackage[utf8]{inputenc}
\usepackage{amssymb}
\usepackage{amsmath}
\usepackage{amsthm}

\usepackage{epsfig}
\usepackage{color}
\usepackage{verbatim}
\usepackage{paralist}

\usepackage{tikz}

\usepackage{comment}
\usepackage{amsmath}
\usepackage{hyperref}
\usepackage{xypic}
\usepackage{amscd}
\usepackage{color}

\newfont{\sheaf}{eusm10 scaled\magstep1}

\newtheorem{thm}{Theorem}[section]

\newtheorem{lemma}[thm]{Lemma}

\newtheorem{proposition}[thm]{Proposition}

\theoremstyle{definition}
\newtheorem{remark}[thm]{Remark}

 \newtheorem{example}[thm]{Example}

\DeclareMathOperator{\Pic}{Pic}

\def\c1{\operatorname{c_1}}
\def\c2{\operatorname{c_2}}

\def\c{\mathbf{c}}

\def\PP{{\mathbb P}}

\def\cong{\simeq}

\def\+{\oplus}                   
\def\*{\otimes}                  

\def\Pic{\operatorname{Pic}}

\def\geq{\geqslant}
\def\leq{\leqslant}

\makeatletter
\@namedef{subjclassname@2020}{\textup{2020} Mathematics Subject Classification}
\makeatother

\begin{document}

\title[2--elementary rational covers of the plane]{Two--elementary rational covers of the plane}

\author[C.~Ciliberto]{Ciro Ciliberto}
\address{C.~Ciliberto, Dipartimento di Matematica,  Universit\`a di Roma ``Tor Vergata'', Via della Ricerca Scientifica, 00173 Roma, Italy}
\email{cilibert@axp.mat.uniroma2.it}

\begin{abstract} In this paper I classify, up to Cremona transformations, the Galois cover of the plane with Galois group of the form $\mathbb Z_2^r$. 
  \end{abstract}

\maketitle

\tableofcontents

\section*{Introduction} The study of finite, normal multiple covers of the plane is a classical subject in algebraic geometry. In particular a problem that is still widely open is the one of classifying, up to Cremona transformations of the plane, the multiple covers $f: S\longrightarrow \PP^2$ as above such that $S$ is rational. The first important contribution  to this subject goes back to the paper \cite {CE}  of 1900 by G. Castelnuovo and F. Enriques. In that paper the authors claimed to give the full classification of rational double covers of the plane up to Cremona transformations. The arguments in that paper were not complete  and a full treatment of the subject has been given only more recently by L. Bayle and A. Beauville \cite {BB} in 2000 and by A. Calabri in his PhD thesis published in 2006 (see \cite {Cal}), that is more in the style of the original Castenuovo--Enriques' approach. The result is stated here in Theorem \ref {thm:cal}. In Calabri's thesis there is also the full classification of rational, cyclic triple covers of the plane. 

Besides these results essentially nothing is known on this subject, for instance it is still unknown a classification, up to Cremona transformation of the plane, of rational non--Galois triple planes. In general the problem of classifying rational multiple covers of the plane seems to be extremely hard, probably out of our present possibilities. Therefore it seems reasonable to restrict the attention to some interesting particular cases. For instance I propose to focus on  Galois covers of the plane whose Galois group is finite and abelian. The reason for looking at these cases is the following. On one side J. Blanc, in his  thesis  \cite {Bl} of 2006, has proposed a classification of the finite abelian subgroups of the Cremona group of the plane, up to conjugation. So, in principle, we know for all abelian Galois covers $f: S\longrightarrow \PP^2$ with $S$ rational, what are the possible Galois group and how they acts on $S$, up to conjugation. On the other hand there is also the paper \cite {Par} of 1991 by R. Pardini, in which there is a detailed description of structure, in particular of the branch curves, of  finite, abelian covers of algebraic varieties. The idea is that a classification, up to Cremona transformations, of the branch curves of the rational abelian covers of the plane should be in principle possible, by putting together Blanc's and Pardini's results. 

Though in principle the above project looks feasible, in practice it is rather hard to be carried out, due to the several cases that Blanc's classification presents and due to the great variety of Galois groups entering into the scene, and this makes the Pardini's style analysis quite complicated and cumbersome. However, I think this is a project that is worth to be pursued. 

The present paper is a first contribution in this direction. In this paper in fact I give the classification of the branch curves of all rational abelian covers of $\PP^2$, up to Cremona transformations, with two--elementary Galois group, i.e., with Galois group of the type $\mathbb Z_2^r$. By Blanc's classification, it turns out that $1\leq r \leq 4$. As I said already, the case $r=2$ is classical and well known, so we can focus to the cases $2\leq r\leq 4$. 

According to Blanc's analysis, that I recall in Section \ref {ssec:cases} (see in particular Proposition \ref {prop:bla}), the classification is naturally divided in two main cases: the case in which there is a pencil of rational curves fixed by the group action (the \emph{invariant conic bundle case}) and the case in which the invariant part of the Picard group under the group action has rank 1, generated by the canonical bundle (the \emph{Del Pezzo case}). 

The classification in the invariant conic bundle case is contained in Propositions \ref {prop:G'=0}, \ref {prop:G'2}, \ref {prop:G'4} for $r=2$, in  Propositions \ref {prop:G'=0,3}, \ref {prop:G'=2,3} for $r=3$ and in  Proposition \ref {prop:G'=2,4} for $r=4$. 

The classification in the Del Pezzo case is contained in  Propositions \ref {prop:bert}, \ref {prop:geiser} for $r=2$, in  Propositions \ref {prop:geiserol}, \ref {prop:geisero} for $r=3$ and in Proposition 
\ref  {prop:r4} for $r=4$. 

The paper is organised as follows. In Section \ref {sec:prel} there are some generalities and preliminaries, in particular I mention the classification of rational double planes and Blanc's subdivision in the invariant conic bundle case and in the Del Pezzo case. In Section \ref {sec:blanc} I recall Blanc's classification of $\mathbb Z_2^r$ subgroups of the Cremona group of the plane up to conjugation. In particular, as I said, one has $1\leq r\leq 4$.
In Section \ref {sec:pard}, following Pardini's paper \cite {Par}, I recall the theory of $\mathbb Z_2^r$ covers of the plane, especially focusing on the cases $2\leq r\leq 4$ that are the ones we are interested in by Blanc's classification. Section \ref {sec:classinv} is devoted to the classification in the invariant conic bundle case, whereas Section \ref {sec:classinva} contains the classification in the Del Pezzo case.\medskip

\noindent {\bf Acknowledgements:} The author is a member of GNSAGA of the Istituto Nazionale di Alta Matematica. \medskip

\section{Preliminaries}\label{sec:prel}

\subsection{Generalities}  Let $S$ be an irreducible, complex, normal, projective surface, $G$ a finite group (that in this paper will often be assumed to be abelian) of automorphisms of $S$ and a finite morphism $f: S\longrightarrow \PP^2$ such that if $x\in \PP^2$ is a general point, $f^{-1}(x)$ is a full orbit of the $G$ action. In this case I will say that $f: S\longrightarrow \PP^2$, or the pair $(S,G)$,  is a \emph{$G$--cover} of $\PP^2$, and of course $\deg(f)=|G|$. 

Given a pair $(S,G)$ as above, that is also called a \emph{$G$--surface}, and given a birational morphism $\phi: S\longrightarrow S'$, with $S'$ an irreducible, normal, projective surface, one says that  $\phi$ is \emph{$G$--equivariant}, if the $G$--action on $S'$ induced by $\phi$ is biregular. The pair $(S,G)$ is said to be \emph{minimal} if any $G$--equivariant morphism $\phi: S\longrightarrow S'$ is an isomorphism. By a sequence of contractions of $(-1)$--curves, one can always achieve the situation that $(S,G)$ is minimal and therefore I will always assume  to deal with minimal pairs. 

Given an irreducible, smooth, projective surface $S$, and given a morphism $f: S\longrightarrow \PP^1$, one says that $(S,\phi)$ is a \emph{conic bundle structure} if:\\
\begin{inparaenum}
\item [$\bullet$] the general fibre $F$ of $\phi$ is smooth, irreducible and rational, so that $|F|$ is a pencil of rational curves;\\
\item [$\bullet$] all singular fibres of $\phi$ are of the form $F_1+F_2$ with $F_1,F_2$ smooth, irreducible and rational, with $F_1^2=F_2^2=-F_1\cdot F_2=-1$ (in particular $F_1,F_2$ are $(-1)$--curves). 
\end{inparaenum}

I will often say that the pencil $|F|$ is a \emph{conic bundle}.

\subsection{The Del Pezzo and the invariant conic bundle cases}\label{ssec:cases}

Given a $G$--surface $(S,G)$, with $S$ smooth, the group $G$ acts on $\Pic(S)$ and I will denote by $\Pic(S)^G$ the fixed part of $\Pic(S)$ by the $G$--action. I recall the following proposition from \cite [Prop. 2.3.1]{Bl}:

\begin{proposition}\label{prop:bla} Let $(S,G)$ be a minimal $G$--surface with $S$ smooth and rational. Then either:\\
\begin{inparaenum}
\item [(i)] $\Pic(S)^G$ has rank 1 and over $\mathbb Q$ it is generated by the canonical system, or\\
\item [(ii)] $\Pic(S)^G$ has rank 2, there a conic bundle structure $(S,\phi)$ on $S$ such that $\Pic(S)^G$ is generated over $\mathbb Q$ by the canonical system and by the general fibre $F$ of $\phi$. 
\end{inparaenum}
\end{proposition}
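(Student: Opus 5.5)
This is the $G$-equivariant version of the classical classification of minimal rational surfaces, obtained by running the $G$-equivariant minimal model program (Mori theory) on $S$. The plan is to work in the $G$-invariant part of the cone of curves and extract an extremal ray. Since $G$ is finite, averaging gives an identification $N^1(S)^G_{\mathbb Q}=\Pic(S)^G\otimes\mathbb Q$. The cone $\overline{NE}(S)^G$ is the image of $\overline{NE}(S)$ under averaging over $G$.

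Because $S$ is rational, $K_S$ is not nef: for instance $K_S\cdot H<0$ for a $G$-invariant ample class $H$, since $h^0(mK_S)=0$ for all $m>0$. Hence the cone theorem gives a $K_S$-negative extremal ray $R$ of $\overline{NE}(S)^G$. This ray is spanned by the $G$-orbit sum of a rational curve $C$ spanning a $K_S$-negative extremal ray of $\overline{NE}(S)$. Contracting $R$ equivariantly gives three cases, according to the classification of extremal rays on smooth surfaces.

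\textbf{Case (a): $C^2<0$.} Then $C$ is a $(-1)$-curve. The curves in the orbit $GC$ are numerically proportional in the invariant space, which forces them to be pairwise disjoint: if $C\cdot gC>0$, then the orbit sum would have nonnegative square, contradicting extremality and negativity. So they can be contracted simultaneously, giving a $G$-equivariant birational morphism that is not an isomorphism. This contradicts minimality, so the case does not occur.

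\textbf{Case (b): $C^2=0$.} Then the orbit sum defines a $G$-invariant fibration $\phi\colon S\to\PP^1$ with rational general fibre $F$, and $\rho(S)^G=2$. One must check that every singular fibre is of the form $F_1+F_2$ with two $(-1)$-curves. Any fibre is a tree of rational curves. If some singular fibre contained more components, relative minimality with respect to the invariant ray would fail. Concretely, the components of reducible fibres not meeting a $G$-invariant section class generate a $G$-stable set of disjoint $(-1)$-curves, contradicting minimality, unless each fibre has exactly two components swapped by $G$ or otherwise unremovable. I would follow the standard argument, as in Manin or Iskovskikh, that fibres with three or more components, or with a component of self-intersection $\leq -2$, contain a $G$-orbit of disjoint $(-1)$-curves. $\Pic(S)^G_{\mathbb Q}$ is then spanned by $F$ and $K_S$, since $K_S\cdot F=-2\neq 0$ makes them independent and the rank is 2. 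This is (ii).

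\textbf{Case (c): $R$ is the whole cone.} Then $\rho(S)^G=1$ and $-K_S$ is ample, so $\Pic(S)^G\otimes\mathbb Q=\mathbb Q K_S$. This is (i).

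The main obstacle is case (b), namely verifying that the fibres have the conic bundle shape in the sense of the definition. The key fact is that the equivariant contraction of an extremal ray with $C^2=0$ has $\rho=2$ relative to $G$. Together with the computation that in any fibre with $\geq 3$ components, or with a non-$(-1)$ component, $G$-invariance of the fibre class produces a disjoint $G$-orbit of $(-1)$-curves, this contradicts minimality. The disjointness check for $G$-orbits of $(-1)$-curves in case (a) is the second delicate point, handled by the same negativity argument.
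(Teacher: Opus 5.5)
The paper gives no proof of this statement; it cites Blanc, Prop.~2.3.1. The expected argument is the equivariant Mori/Manin--Iskovskikh one whose outline you follow. However, your case split is on the wrong quantity, and as written Case (a) fails.

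You type $R$ by $C^2$, where $C$ spans a $K_S$-negative extremal ray of $\overline{NE}(S)$; that is the non-equivariant classification. For such a $C$, $C^2=0$ forces $S$ to be a $\PP^1$-bundle with $\rho(S)=2$, and $C^2>0$ forces $S\cong\PP^2$. So your Case (b) has no singular fibres at all, and every minimal pair with $\rho(S)\geq 3$ falls into Case (a). There the claimed contradiction does not exist. Take a conic bundle whose singular fibres $F_1+F_2$ have components exchanged by $G$, or the quartic del Pezzo surface with $G\cong\mathbb Z_2^4$. In both, every $K_S$-negative extremal ray of $\overline{NE}(S)$ is a $(-1)$-curve, and its $G$-translates do meet it: $F_1\cdot F_2=1$, resp.\ each line meets five others in its orbit. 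The orbit sum then has square $0$, resp.\ positive square. Nothing forbids an extremal ray of the invariant cone from having non-negative square; it is $\mathbb R_+F$, resp.\ $\mathbb R_+(-K_S)$, in the two conclusions of the statement. So Case (a) as written would exclude exactly outcomes (i) and (ii) whenever $\rho(S)\geq 3$.

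The trichotomy must be on $\bar C^2$, where $\bar C=C_1+\dots+C_n$ is the orbit sum spanning $R$. By invariance, $C_i\cdot\bar C=\bar C^2/n$ and $K_S\cdot C_i=K_S\cdot\bar C/n<0$ for every $i$.

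If $\bar C^2<0$, then $C_1^2+\sum_{j\geq 2}C_1\cdot C_j<0$ gives $C_1^2<0$. So $C_1$ is a $(-1)$-curve, and then $C_1\cdot C_j=0$ for $j\geq 2$. This is your Case (a) with the correct hypothesis, and minimality kills it.

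If $\bar C^2=0$, then $C_i\cdot\bar C=0$ shows $\bar C$ is nef. Riemann--Roch together with $h^0(K_S-m\bar C)=0$ yields the $G$-invariant fibration with $K_S\cdot F=-2$. Extremality of $R$ then does the rest. The sum $\bar D$ of any $G$-orbit of fibre components satisfies $\bar D+\bar E=mF$ with $\bar E$ effective and invariant, hence $\bar D\in R$.
\begin{itemize}
\item Let $V$ be the span of the fibre components. It is a $G$-stable hyperplane of $N^1(S)$ with trivial one-dimensional quotient, since it misses an invariant ample class. So $V^G=\mathbb R F$, which gives $\rho^G=2$.
\item Apply Zariski's lemma to the part of $\bar D$ lying in a fixed reducible fibre. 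It shows the stabiliser of that fibre permutes its components transitively with equal multiplicities, which forces the fibre to be $F_1+F_2$. Your minimality argument for fibres with at least three components also works here.
\end{itemize}

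If $\bar C^2>0$, then $\bar C$ is interior to the positive cone of $N^1(S)^G$, which lies in $\overline{NE}(S)^G$. So $R$ can be extremal only if $\rho^G=1$. Ampleness of $-K_S$ then follows from $K_S\cdot x=K_S\cdot\pi(x)<0$ for all $0\neq x\in\overline{NE}(S)$, with $\pi$ the averaging map.

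Finally, $K_S\cdot H<0$ does not follow from $h^0(mK_S)=0$. You only need that $K_S$ is not nef, since otherwise $\kappa(S)\geq 0$. The orbit sum of a $K_S$-negative curve then gives a $K_S$-negative invariant class.
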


In case (i), $S$ is a Del Pezzo surface (see \cite [Lem. 4.1.5]{Bl}), i.e.,  $S$ is a rational surface whose anti-canonical divisor $-K_S$ is ample. Namely,
$S$ is either isomorphic to $\PP^2$, or to $\PP^1 \times \PP^1$ or to the blow-up of $r\leq 8$ points $p_1, p_2, \ldots, p_r$ in $\PP^2$ in sufficiently
general position. In this case I will say that we are in the \emph{Del Pezzo case}.

In  case (ii), there is a pencil $\phi: S\longrightarrow \PP^1$ with general fibre $F$ smooth, rational and irreducible. 
The group $G$ preserves this pencil. If $F$ is the generic fibre of $\phi$, then $G$ embeds into 
$${\rm Aut}(F)\rtimes {\rm PGL}(2, \mathbb C)\cong {\rm PGL}(2, \mathbb C(t))\rtimes {\rm PGL}(2, \mathbb C)$$
where $t$ is a variable.
 In this case I will say that $(S,g)$ is an \emph{invariant conic bundle structure} for $G$ and that we are in the \emph{invariant conic bundle} case.  

Note that if $(S,G)$ is a minimal $G$--surface with $S$ rational and normal, but not smooth, then $G$ acts also on the minimal desingularization $S'$ of $S$, and $(S',G)$ is minimal, so that Proposition \ref {prop:bla} can be applied to $(S',G)$. Then one says that $(S,G)$ presents the Del Pezzo or the invariant conic bundle structure if $(S',G)$ does. Note that if $(S,G)$ presents  the invariant conic bundle structure, there is a pencil $|F|$ of rational curves that are in general Weil divisors on $S$. 

Given a finite subgroup $G$ of the Cremona group of the plane, there is a (unique up to isomorphism) smooth rational surface $S$ and a birational map $\varphi: \PP^2\dasharrow S$ such that $\varphi \circ G\circ \varphi^{-1}\cong G$ acts biregularly on $S$ and $(S,G)$ is minimal (see \cite [Prop. 2.2.3]{Bl}, \cite [Thm. 1.4]{DE}). I will say that $G$ presents the Del Pezzo or the invariant conic bundle structure, if so is for $(S,G)$.

\subsection{Classification of rational double planes} 

In this paper I want to classify $G$--rational covers of $\PP^2$ up to Cremona transformations, in the case $G$ is of the form $\mathbb Z_2^r$. The case $r=2$ is classical, going back to G. Castelnuovo and F. Enriques \cite {CE}. In more recent times this case has been reconsidered by L. Bayle and A. Beauville  in \cite {BB}, and, in a way closer to Castelnuovo--Enriques' and our viewpoint here by A. Calabri in his Doctoral Thesis \cite {Cal}. The classification theorem  is as follows:

\begin{thm}\label{thm:cal} Let $f: S\longrightarrow \PP^2$ be a finite double cover of the plane with irreducible, projective, normal, surface. Then $S$ is rational if and only if, up to Cremona equivalence, the branch curve of $f$ is of one of the following types:\\
\begin{inparaenum}[(i)]
\item a reduced conic;\\
\item a smooth quartic curve;\\
\item an irreducible reduced  sextic curve with two infinitely near triple points;	\\
\item an irreducible reduced curve of degree $2d>2$ with an ordinary singularity of multiplicity $2d-2$ and either otherwise smooth or with only a further node. 
\end{inparaenum}
\end{thm}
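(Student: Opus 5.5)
We prove both directions separately. The "if" direction is a direct check: in each case we exhibit a rational parametrization or a conic bundle/Del Pezzo structure on the (desingularized) double cover. For (i) the double cover of $\PP^2$ branched along a smooth conic is a quadric $\PP^1\times\PP^1$. A line pair gives a quadric cone, which is also rational. For (ii) the double cover branched along a smooth quartic is a Del Pezzo surface of degree $2$, hence the blow-up of $\PP^2$ at $7$ points. For (iv), consider the pencil of lines through the point $p$ of multiplicity $2d-2$. A general such line meets the branch curve in exactly $2$ further points, so its preimage is a double cover of $\PP^1$ branched at $2$ points (the multiplicity $2d-2$ is even), hence a rational curve. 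Thus the resolution of $S$ carries a conic bundle over $\PP^1$ and is rational by Noether--Enriques; the extra node does not affect this. For (iii) we first resolve the two infinitely near triple points. The resulting double cover is branched along a curve in the anticanonical-type system on a blown-up plane, and we check that $-K$ of the resolved cover is big with a pencil of rational curves, e.g. the conics through the triple point tangent to the tangent direction. Alternatively we identify it as a Del Pezzo double cover of degree $1$ (the Bertini case), giving the blow-up of $8$ points.

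For the "only if" direction, let $S$ be rational. We pass to the canonical resolution $\tilde S\to \tilde P$, where $\tilde P$ is a blow-up of $\PP^2$ obtained by resolving the singularities of the branch curve $B$ (even resolution). The covering involution $\iota$ gives a $\mathbb Z_2$-subgroup of the Cremona group. Applying the equivariant minimal model construction and Proposition \ref{prop:bla} to $G=\langle\iota\rangle$, we may assume $(S',G)$ is minimal and split into the two cases of that proposition.

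In the \emph{Del Pezzo case}, $\Pic(S')^G$ has rank one. Then $\iota$ acts as $-1$ on $K^\perp$, which forces $S'$ to be a Del Pezzo surface of degree $2$ or $1$ with $\iota$ the Geiser or Bertini involution. In the degree $2$ case the quotient is $\PP^2$ via $|-K|$ with branch a smooth quartic, giving (ii). In the degree $1$ case the quotient is the quadric cone, branched along a sextic section away from the vertex. Projecting from a general point of the cone (a Cremona equivalence) turns this into a plane sextic with two infinitely near triple points, giving (iii). Degree $\geq 3$ is excluded because an involution acting as $-1$ on $K^\perp$ forces $K^2\le 2$. The small cases $\PP^2$ and $\PP^1\times\PP^1$ yield (i).

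In the \emph{conic bundle case}, $\iota$ preserves a conic bundle $|F|$. There are two subcases. If $\iota$ acts trivially on the base, it acts on each fibre as an involution, and its fixed curve is a bisection (hyperelliptic in general) together with components of singular fibres. The quotient is a ruled surface $\mathbb F_n$, with branch curve meeting the fibre in $2$ points plus some fibres. Elementary transformations and Cremona maps then bring $\mathbb F_n$ to the pencil of lines through a point, giving (iv): a curve of degree $2d$ with a point of multiplicity $2d-2$. Minimality controls the extra singularities, leaving at most a node. If $\iota$ acts nontrivially on the base, the quotient is a conic bundle over $\PP^1/\iota\cong\PP^1$ and the fibres are not branched. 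We then reduce to a smaller configuration, a conic as in (i) or a case of (iv) with $d$ small.

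The main obstacle is this last part. We must show carefully, via the numerics of singular fibres exchanged or fixed by $\iota$, that minimality leaves only the listed branch curves, and in particular that the allowed extra singularity is exactly a node. The plan is to follow Calabri's fibre-by-fibre analysis \cite{Cal} and the invariant count of \cite{BB}.
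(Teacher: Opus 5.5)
\paragraph{Overall.} The paper gives no proof of this theorem. It is quoted from Castelnuovo--Enriques, Bayle--Beauville and Calabri, and the remarks after it only match the cases to the dichotomy of Proposition~\ref{prop:bla}. Your outline follows the Bayle--Beauville route. Its Del Pezzo half is essentially right, though excluding $3\le K^2\le 7$ needs one line: on a $(-1)$-curve $\iota^*\ell=-\ell-\frac{2}{K^2}K$ must be integral, and $K$ is primitive.

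\paragraph{The gap: the conic bundle case.} This is where the content of (iv) lies. ``Minimality controls the extra singularities, leaving at most a node'' is not an argument, and it points to the wrong mechanism. The singularities of a plane model are not intrinsic to $(S',\iota)$; the node is forced by parity. What is missing is the following.
\begin{itemize}
\item Suppose $\iota$ acts trivially on the base. Minimality makes every singular fibre twisted, since an untwisted one has an invariant $(-1)$-curve as a component. In particular no fibre component is fixed, contrary to what you write.
\item Hence, by Lemma~\ref{lem:bl}, the fixed locus is one smooth hyperelliptic curve $C$ of genus $g=k-1$. If there are no singular fibres it is instead two disjoint sections, which leads to (i).
\item In the hyperelliptic case, $S'/\iota$ is a smooth $\mathbb F_m$ whose entire branch locus is the smooth bisection $\bar C\sim 2\sigma_0+bf$. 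Here $b$ is even and $g=b-m-1$, so $m\equiv g+1\pmod 2$.
\item An elementary transformation at a general point $x\in\bar C$, followed by one at the point of $\bar C$ infinitely near to $x$, lowers $m$ by $2$. It returns a smooth bisection with no branch fibres.
\item If $g$ is even you reach $\mathbb F_1$. Contracting $\sigma_0$ gives degree $2d=g+2$ with an ordinary $(2d-2)$-fold point and no other singularity.
\item If $g$ is odd you reach $\mathbb F_0$. A smooth even bisection on $\mathbb F_1$ has even genus, so one more elementary transformation, at a general point off $\bar C$, is unavoidable. It creates exactly one node, giving $2d=g+3$.
\end{itemize}

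\paragraph{Two other steps are wrong as stated.}
\begin{itemize}
\item Suppose $\iota$ acts non-trivially on the base. By Lemma~\ref{lem:bl} it twists no fibre, so minimality removes all singular fibres: invariant ones contain invariant $(-1)$-curves, and exchanged pairs contain two disjoint exchanged ones. Hence $S'=\mathbb F_n$. Every fixed curve lies in one of the two invariant fibres and is therefore rational. Such a fibre may be fixed pointwise, so ``the fibres are not branched'' is false. The outcome is always (i), never (iv), since the curves in (iv) have geometric genus $2d-2$ or $2d-3\geq 1$.
\item In the `if' direction for (iii), the conics through the triple point with the given tangent form a $3$-dimensional system, not a pencil. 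On the canonical resolution the branch class is $6H-2e_1-4e_2$, with $e_i$ the total transforms of the exceptional curves. Each such conic meets it in $6$ points, so its preimage has genus $2$, not $0$. Nor is $-K$ big on that resolution, where $K^2=-2$.
\item Use your alternative instead. Contracting three $(-1)$-curves recovers the degree~$1$ Del Pezzo surface, i.e.\ undo the projection from the quadric cone.
\end{itemize}
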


Note that in case (i) if the conic is singular and in case (iv) we are in the invariant conic bundle case (on the minimal desingularization of $S$), whereas in cases (ii) and (iii) we are in the Del Pezzo case. In case 
(i), if the conic is irreducible, we are in the Del Pezzo case, since $S\cong \PP^1\times \PP^1$ and the double plane map induces the involution $(x,y)\in \PP^1\times \PP^1\longrightarrow (y,x)\in  \PP^1\times \PP^1$. In case (ii), $S$ is a smooth Del Pezzo surface with $K_S^2=2$, i.e., the plane blown--up at 7 sufficiently general points, and the double plane map induces an involution on $S$ called a \emph{Bertini involution}. In case (iii), the minimal desingularization of $S$ is a smooth Del Pezzo surface with $K_S^2=1$, i.e., the plane blown--up at 8 sufficiently general points, and the double plane map induces an involution on $S$ called a \emph{Geiser involution}. In this case, the surface $S$ can also be seen as the double cover of a quadric cone $Q$ in $\PP^3$ branched along a smooth curve cut out on $Q$ by a cubic hypersurface and at the vertex of the cone.   

In view of Theorem \ref {thm:cal}, in this paper I will disregard the case $G\cong \mathbb Z_2$ and I will focus on the cases $G\cong \mathbb Z_2^r$, with $r\geq 2$. I will base my analysis on two main fundamental results: (a) the classification in J. Blanc thesis \cite {Bl} of the abelian finite subgroups of the plane Cremona group; (b) R. Pardini's paper \cite {Par} in which there is a thorough analysis of the abelian covers  of algebraic varieties, which I will apply to $\mathbb Z_2^r$ rational covers of the plane. 

\section{Blanc's classification}\label{sec:blanc} In this section I recall Blanc's classification of the $\mathbb Z_2^r$--rational surfaces (with $r>1$), up to conjugation. I first start by considering  subgroups $G\cong \mathbb Z_2^r$, with $r>1$, of the Cremona group of the plane that present the invariant conic bundle case. 

\subsection{Preliminaries on the invariant conic bundle case}
If we have a $G$--surface $(S,G)$, and there is an invariant conic bundle on $S$, then  (on the minimal desingularization $S'$ of $S$) there is a base point free  pencil $|F|$ of rational curves on $S$ that is preserved by $G$, hence $G$ acts on the $\PP^1$ that parameterizes the pencil $|F|$, so that we have a homomorphism $\pi: G\longrightarrow {\rm PGL}(2,\mathbb C)$. Then we have an exact sequence
\begin{equation}\label{eq:split}
0\longrightarrow G'\longrightarrow G \stackrel{\pi }{\longrightarrow}\pi(G)\longrightarrow 0.
\end{equation}
Of course $G'$ is a subgroup of the automorphisms of the general fibre $F\in |F|$, that is  a $\PP^1$, so $G'$ is  also a subgroup of ${\rm PGL}(2,\mathbb C)$. 
In the situation under consideration in which $G\cong \mathbb Z_2^r$, with $r>1$, then both $G'$ and $\pi(G)$ are of the form $\mathbb Z_2^s$, with $0\leq s\leq r$. Moreover, by the classification of the finite abelian subgroups of ${\rm PGL}(2,\mathbb C)$, one has $s\leq 2$ and therefore $r\leq 4$. Finally, the sequence \eqref {eq:split} splits, because $G$ is also a finite dimensional vector space on $\mathbb Z_2$. 

Still in the  case that we are in the invariant conic bundle case, given $g\in G\setminus \{0\}$ such that $\pi(g)=0$, $g$ acts on every singular fibre of the conic bundle $|F|$ (still considered on the minimal desingularization $S'$ of $S$). If $F_1+F_2$ is such a fibre, 
and $g(F_1)=F_2$ (and consequently  $g(F_2)=F_1$), I will say that $g$ \emph{twists} the singular fibre $F_1+F_2$. I need to mention the following result (\cite [Lem. 7.1.3]{Bl}):

\begin{lemma}\label{lem:bl} Let  $(S,G)$ be a $G$--surface  with $S$ smooth, presenting the invariant conic bundle case. Consider a $g\in G\setminus \{0\}$ that twists at least one singular fibre of the conic bundle structure. Then the following are equivalent:\\
\begin{inparaenum}[(i)]
\item $g$ is an involution;\\
\item $\pi(g)=0$;\\
\item the set of points of $S$ fixed by $g$ is a smooth hyperelliptic curve of genus $k-1>0$, plus perhaps a finite number of
isolated points, which are the singular points of the singular fibres of $|F|$ not twisted by $g$;\\
\item $g$ fixes some curve of positive genus.
\end{inparaenum}

Moreover, if the above conditions are satisfied, the number of singular fibres of the conic bundle that are twisted by $g$ is $2k$.
\end{lemma}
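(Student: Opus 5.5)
The plan is to work directly on the conic bundle $\phi: S\to\PP^1$ with $g$ preserving each fibre class structure, and to prove (i)$\Rightarrow$(ii)$\Rightarrow$(iii)$\Rightarrow$(iv)$\Rightarrow$(i), then count twisted fibres via Riemann--Hurwitz. Throughout, fix a singular fibre $F_1+F_2$ with $g(F_1)=F_2$.

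\emph{(i)$\Rightarrow$(ii).} Suppose $g$ is an involution with $\pi(g)\neq 0$. Then $\pi(g)$ is an involution of $\PP^1$, and it fixes the point $t_0=\phi(F_1+F_2)$, since $g$ maps the fibre $F_1+F_2$ to itself. So $g$ sends the fibre over $t_0$ to itself while acting as an involution on the base. I would derive a contradiction by local analysis at the node $p=F_1\cap F_2$, which is a fixed point of $g$. The tangent directions of $F_1,F_2$ at $p$ are exchanged, so $dg_p$ has eigenvalues $\pm1$ with eigenvectors along the bisectors. In local coordinates $xy=t$ for $\phi$, the map $g$ sends $(x,y)\mapsto(y,x)$ up to higher order terms, and hence $t\mapsto t$ to first order. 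Linearizing $g$ (possible since it has finite order) makes this exact, so $\pi(g)$ acts as the identity near $t_0$, giving $\pi(g)=0$.

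\emph{(ii)$\Rightarrow$(iii).} Now $g$ acts on every fibre. On a smooth fibre $F\cong\PP^1$ it is a nontrivial automorphism, since $g\neq 0$ and $g$ acts faithfully as an element of $\mathrm{PGL}(2,\mathbb C(t))$. The step I expect to be the main obstacle is showing that $g$ has order exactly $2$ on the generic fibre. I would argue this from the twisted fibre: near $p$, $g$ swaps the two branches and preserves $t=xy$, so by linearization $g(x,y)=(\lambda y,\lambda^{-1}x)$, which is an involution. An automorphism of finite order that is an involution on an open set is an involution.

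Given that $g$ is an involution on the generic fibre, it has exactly two fixed points on each general fibre. The fixed locus therefore contains a curve $C$ that is a double section of $\phi$. On a non-twisted singular fibre, $g$ preserves each $F_i$ and acts on each as an involution, and the node is an isolated fixed point. On a twisted fibre, $g$ fixes the node and exactly one further point on the $(-1)$-curve bisector, so $C$ passes through the twisted fibre at one point with ramification. It remains to check that $C$ is smooth and that $C\to\PP^1$ is branched exactly over the twisted fibres. Smoothness holds because fixed loci of involutions on smooth surfaces are smooth. Riemann--Hurwitz then gives $2g(C)-2=-4+b$, where $b$ is the number of twisted fibres, so $b=2k$ is even and $C$ is hyperelliptic of genus $k-1$. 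Positivity of the genus would need $b\geq 4$. I expect $b=2$ can be excluded because the curve $C$ would be rational; in that case one could contract the components to get a minimal model contradicting the twisting assumption, or more likely $b=2$ is simply impossible by parity of the fixed points. This step is the remaining delicate point.

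\emph{(iii)$\Rightarrow$(iv)} is trivial.

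\emph{(iv)$\Rightarrow$(i).} If $g$ fixes a curve $D$ of positive genus, then $D$ is not a fibre and is not contained in a fibre, so $D$ dominates the base. Thus $\pi(g)$ fixes infinitely many points, so $\pi(g)=0$ and $g$ acts on the fibres fixing points of $D$. Elements of finite order in $\mathrm{PGL}(2)$ that fix a point have two fixed points. If $g$ had order $n>2$, then $g^2$ would also twist, since the twist swaps $F_1\leftrightarrow F_2$ only for odd powers. I would handle this by the same linearization at the node: the swap forces $g^2$ to be the identity near $p$, hence $g$ is an involution. The count of $2k$ twisted fibres was obtained above.
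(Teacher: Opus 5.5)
The paper does not prove this lemma; it quotes it from Blanc's thesis, so your proposal has to stand on its own. Most of it does. The computation at the node is the right tool. With $t=xy$ near the node $p$ of a twisted fibre, $dg_p$ is anti-diagonal. Hence $g^2=1$ forces the multiplier of $\pi(g)$ at $t_0$ to be $1$, so $\pi(g)=0$. Conversely, $g^*t=t$ forces $(dg_p)^2=1$, so $g^2=1$ because $g$ has finite order. This gives (i)$\Leftrightarrow$(ii). Your fixed-locus analysis and Riemann--Hurwitz then correctly produce the fixed locus: a smooth irreducible bisection $C$ of genus $k-1$, branched exactly over the $b=2k$ twisted fibres, together with the nodes of the untwisted singular fibres. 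The implication (iv)$\Rightarrow$(ii) is also fine. There are two slips:
\begin{itemize}
\item On a twisted fibre the only fixed point is the node, since a fixed point of $F_1$ lies in $g(F_1)=F_2$. $C$ passes through the node along the $(+1)$-eigenline of $dg_p$, transversally to both components, which is why $C$ ramifies there.
\item $g^2$ does not twist anything. Your last paragraph is unnecessary anyway, since (iv)$\Rightarrow$(ii)$\Rightarrow$(i).
\end{itemize}

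The genuine gap is the inequality $k-1>0$. It is the only content of (ii)$\Rightarrow$(iii) beyond your computation, and (ii)$\Rightarrow$(iv) rests on it. You leave it open, and neither suggestion can work: parity only says $b$ is even, and minimality does not help, as shown below. In fact $b=2$ really occurs.

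Let $S$ be $\PP^1\times\PP^1$ blown up at $(0,0)$ and $(\infty,\infty)$, fibred by $x$, and let $g\colon (x,y)\mapsto (x,x/y)$. In the chart $x=uv$, $y=v$ around the node over $x=0$ one gets $g(u,v)=(v,u)$. So $g$ is a biregular involution, trivial on the base, twisting exactly two singular fibres, and its fixed locus is the smooth \emph{rational} bisection $y^2=x$.

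Minimality does not rescue the statement. Adjoin the commuting involution $y\mapsto (y-b(x)x)/(b(x)y-1)$, with $b$ a general polynomial of degree $m\geq 3$. The resulting $\mathbb Z_2^2$ has involutions fixing $y^2=x$, $w^2=1-b^2x$ and $w^2=x(b^2x-1)$. On a relatively minimal model all $2m+3$ singular fibres are twisted and $K_S^2=5-2m<0$. Hence the pair is minimal, since with $\operatorname{rk}\operatorname{Pic}(S)^G=2$ a $G$-orbit of disjoint $(-1)$-curves would force $K_S^2>0$. Yet $g$ still twists only two fibres.

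So your argument proves what is actually true: (i)$\Leftrightarrow$(ii)$\Leftrightarrow$(iii) with genus $k-1\geq 0$, together with (iv)$\Rightarrow$(i). The implication (i)$\Rightarrow$(iv) holds exactly when $g$ twists at least four fibres. Rather than trying to exclude $b=2$, you should state the lemma with this correction, or add the hypothesis $k\geq 2$.
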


In the setting of Lemma \ref {lem:bl}, if $g\in G$  twists at least one singular fibre of the conic bundle structure and one of the (and therefore all) conditions of Lemma \ref {lem:bl} are satisfied, one says that $g$ is a \emph{twisting De Jonqui\`ere involution}. 

In the situation under consideration in which $G\cong \mathbb Z_2^r$, with $r>1$, given a  non--zero element $g\in G$, either it does not twist any singular fibre of the conic bundle structure (hence it exchanges singular fibres) or,  being  an involution, one has $\pi(g)=0$. In other terms, the non--zero elements of $G'$ are exactly the twisting De Jonqui\`ere involutions in $G$. 

If $(S,G)$ is a $G$--surface presenting the invariant conic bundle case,   with $S$ normal but not smooth, one can apply Lemma \ref {lem:bl} to the minimal desingularization $S'$ of $S$, that is still a $G$--surface presenting the invariant conic bundle case. Then I can apply the above definition and terminology for $(S,G)$. 

\subsection{Blanc's classification in the invariant conic bundle structure case}

Here I state this theorem  (see \cite [Chapt. 10, Thm. B] {Bl}):

\begin{thm}\label{thm:class1} A subgroup $G\cong \mathbb Z_2^r$, with $r>1$, of the Cremona group that presents the invariant conic bundle case, is conjugate, in the Cremona group, to one (and only one) of the following\footnote{The symbols indicating the various cases are taken from Blanc's thesis.}:\\
\begin{inparaenum}
\item [{\bf (0.22)}] $r=2$, automorphisms of $\PP^1\times \PP^1$ preserving the pencil given by the first projection $p_1:  \PP^1\times \PP^1\longrightarrow \PP^1$, generated by $(x,y)\mapsto (-x,y)$ and $(x,y)\mapsto (x,-y)$, with $G'\cong \mathbb Z_2$;\\
\item [{\bf (P1.221)}] $r=2$, automorphisms of $\PP^1\times \PP^1$  preserving the pencil given by the first projection $p_1:  \PP^1\times \PP^1\longrightarrow \PP^1$, generated by $(x,y)\mapsto (x^{-1},y)$ and $(x,y)\mapsto (-x,-y)$, with $G'=0$;\\
\item [{\bf (P1.22.1)}] $r=2$, automorphisms of $\PP^1\times \PP^1$  preserving the pencil given by the first projection $p_1:  \PP^1\times \PP^1\longrightarrow \PP^1$, generated by $(x,y)\mapsto (\pm x^{\pm 1},y)$, with $G'=0$;\\
\item [{\bf (P1.2221)}] $r=3$, automorphisms of $\PP^1\times \PP^1$  preserving the pencil given by the first projection $p_1:  \PP^1\times \PP^1\longrightarrow \PP^1$, generated by $(x,y)\mapsto (\pm x^{\pm 1},y)$ and $(x,y)\mapsto (x,-y)$, with $G'\cong \mathbb Z_2$;\\
\item [{\bf (P1.222)}] $r=3$, automorphisms of $\PP^1\times \PP^1$  preserving the pencil given by the first projection $p_1:  \PP^1\times \PP^1\longrightarrow \PP^1$, generated by $(x,y)\mapsto (\pm x,\pm y)$ and $(x,y)\mapsto (x^{-1},y)$, with $G'\cong \mathbb Z_2$;\\
\item [{\bf (P1s.222)}] $r=3$ automorphisms of $\PP^1\times \PP^1$  preserving the pencil given by the first projection $p_1:  \PP^1\times \PP^1\longrightarrow \PP^1$, generated by $(x,y)\mapsto (x^{-1},y^{-1})$,  $(x,y)\mapsto (x,-y)$ and $(x,y)\mapsto (x,x/y)$, with $G'\cong \mathbb Z_2^2$;\\
\item [{\bf (P1.2222)}] $r=4$, automorphisms of $\PP^1\times \PP^1$  preserving the pencil given by the first projection $p_1:  \PP^1\times \PP^1\longrightarrow \PP^1$, generated by $(x,y)\mapsto (\pm x^{\pm 1},\pm y^{\pm 1})$, with $G'\cong \mathbb Z_2^2$;\\
\item [{\bf (C.2,21)}] $r=2$, birational maps  of $\PP^1\times \PP^1$ to itself preserving the pencil given by the first projection $p_1:  \PP^1\times \PP^1\longrightarrow \PP^1$, generated by 
\begin{align*}
&((x_1:x_2), (y_1:y_2))\dasharrow ((x_1:x_2), (y_2\prod_{i=1}^k(x_1-b_ix_2):y_1\prod_{i=1}^k(x_1-a_ix_2))),\\
&((x_1:x_2), (y_1:y_2))\mapsto((x_1:-x_2), (y_1:y_2))\\
\end{align*}
where $k$ is even,  $a_1,\ldots, a_k, b_1,\ldots, b_k\in \mathbb C^*$ are all distinct, and the sets $\{a_1,\ldots, a_k\}$ and
$\{b_1,\ldots, b_k\}$ are both invariant by multiplication by $-1$; here $G'\cong \mathbb Z_2$ contains exactly one twisting de Jonqui\`eres involution, that fixes
a hyperelliptic curve of genus $k-1$;\\
\item [{\bf (C.2,22)}] $r=3$, birational maps of $\PP^1\times \PP^1$ to itself preserving the pencil given by the first projection $p_1:  \PP^1\times \PP^1\longrightarrow \PP^1$, generated by 
\begin{align*}
&((x_1:x_2), (y_1:y_2))\dasharrow ((x_1:x_2), (y_2\prod_{i=1}^{k/4}P(b_i):y_1\prod_{i=1}^{k/4}P(a_i))),\\
&((x_1:x_2), (y_1:y_2))\mapsto((x_1:-x_2), (y_1:y_2))\\
&((x_1:x_2), (y_1:y_2))\mapsto((x_2:x_1), (y_1:y_2))\\
\end{align*}
where $k$ is divisible by 4, $P(z)=(x_1^2-z^2x_2^2)(x_1^2-z^{-2}x_2^2)$, 
$a_1,\ldots, a_{k/4}, b_1,\ldots, b_{k/4}\in \mathbb C\setminus \{0, \pm 1\}$ are all distinct; here $G'\cong \mathbb Z_2$ contains exactly one twisting de Jonqui\`eres involution, that fixes
a hyperelliptic curve of genus $k-1$;\\
\item [{\bf (C.22)}] $r=2$, $G=G'$ formed by birational maps of $\PP^1\times \PP^1$ to itself preserving the pencil given by the first projection $p_1:  \PP^1\times \PP^1\longrightarrow \PP^1$, generated by two twisting De Jonqui\'eres involutions
\begin{align*}
&(x,y)\dasharrow \Big (x, \frac {g(x)}y\	\Big ),\\
&(x,y)\dasharrow \Big (x, \frac {h(x)y - g(x)}{y - h(x)}\Big)\\
\end{align*}
for some $g(x), h(x) \in \mathbb C(x)\setminus \{0\}$, and where each of the two involutions fixes a hyperelliptic curve of positive genus;\\
\item [{\bf (C.221)
}] $r=3$, birational maps of $\PP^1\times \PP^1$ to itself preserving the pencil given by the first projection $p_1:  \PP^1\times \PP^1\longrightarrow \PP^1$, generated by 
\begin{align*}
&(x,y)\dasharrow \Big (x, \frac {g(x)}y\Big ),\\
&(x,y)\dasharrow \Big (x, \frac {h(x)y - g(x)}{y - h(x)}\Big)\\
&(x,y)\mapsto (-x,y)\\
\end{align*}
for some $g(x), h(x) \in \mathbb C(x)\setminus \{0\}$ invariant by the involution $x\mapsto -x$; here $G'\cong \mathbb Z_2^2$ and the genus of the hyperelliptic
curve fixed by any non--trivial involution of $G'$ is odd;\\
\item [{\bf (C.22,22)}] $r=4$, birational maps of $\PP^1\times \PP^1$ to itself preserving the pencil given by the first projection $p_1:  \PP^1\times \PP^1\longrightarrow \PP^1$, generated by 
\begin{align*}
&(x,y)\dasharrow \Big (x, \frac {g(x)}y\Big ),\\
&(x,y)\dasharrow \Big (x, \frac {h(x)y - g(x)}{y - h(x)}\Big)\\
&(x,y)\mapsto (\pm x^{\pm 1},y)\\
\end{align*}
for some $g(x), h(x) \in \mathbb C(x)\setminus \{0\}$ invariant by  $x\mapsto \pm x^{\pm 1}$; here $G'\cong \mathbb Z_2^2$ and the genus of the hyperelliptic
curve fixed by any non--trivial involution of $G'$ is equal to 3 modulo 4.\\
\end{inparaenum}
\end{thm}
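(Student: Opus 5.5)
This result is quoted from Blanc's thesis, so the plan below reconstructs his argument rather than proving it from scratch. We take a minimal $G$--surface $(S,G)$ with $S$ smooth and an invariant conic bundle $\phi:S\to\PP^1$, as provided by Proposition \ref{prop:bla}. We use the split exact sequence \eqref{eq:split}, with $G'\cong\mathbb Z_2^s$, $\pi(G)\cong\mathbb Z_2^t$, $s,t\le 2$ and $s+t=r$. The case division is by the pair $(s,t)$, and within each pair by whether $G'$ contains twisting De Jonqui\`eres involutions.

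\emph{Step 1: no twisting elements.} Suppose no element of $G'$ twists a singular fibre. Then every singular fibre is exchanged with another one by some element of $G$, or it is not twisted at all. In the latter case one of its components can be contracted $G$--equivariantly, and minimality forbids this. One then shows that in this situation there are no singular fibres at all, so $S$ is a Hirzebruch surface $\mathbb F_n$. Next one shows that $n$ can be reduced to $0$: $G$--equivariant elementary transformations at $G$--orbits of points are allowed because we only need conjugacy in the Cremona group, and we apply them at fixed points of $G'$ on fibres. This gives $S=\PP^1\times\PP^1$ with $G$ acting through $\mathrm{PGL}(2)\times\mathrm{PGL}(2)$, but respecting the first projection, which allows the twisted form $(x,y)\mapsto(x,x/y)$. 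We then list conjugacy classes of pairs (a $\mathbb Z_2^t\subset\mathrm{PGL}(2)$ on the base, and a lift) using the normal forms $\langle x\mapsto -x\rangle$ and $\langle \pm x^{\pm1}\rangle$. This produces (0.22), (P1.221), (P1.22.1), (P1.2221), (P1.222), (P1s.222) and (P1.2222). The case $G'=0$, $t=2$ splits into (P1.221) and (P1.22.1) according to whether the section $y$ can be chosen invariant.

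\emph{Step 2: twisting elements.} Suppose some $g\in G'$ twists a fibre. By Lemma \ref{lem:bl}, $g$ is a De Jonqui\`eres involution fixing a hyperelliptic curve of genus $k-1$ and twisting $2k$ fibres. On the generic fibre $F\cong\PP^1_{\mathbb C(x)}$ we write $g$ as $y\mapsto g(x)/y$. If $G'\cong\mathbb Z_2^2$, then $G'$ is a Klein group in $\mathrm{PGL}(2,\mathbb C(x))$, which we normalize as $\{y\mapsto g/y,\ y\mapsto (hy-g)/(y-h)\}$. For $t=0$ this gives (C.22). If $G'\cong\mathbb Z_2$ with $t=0$, then $G$ is not of the form $\mathbb Z_2^r$ with $r>1$, so for $G'\cong\mathbb Z_2$ we need $t\ge1$. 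We then lift the base action of $\pi(G)$ ($x\mapsto -x$, or $\pm x^{\pm1}$) as coordinate changes commuting with $G'$. This forces $g,h$ (or the root sets $\{a_i\},\{b_i\}$) to be invariant under the base action, and yields (C.2,21), (C.2,22), (C.221) and (C.22,22). The congruence conditions ($k$ even, $4\mid k$, genus odd, genus $\equiv 3 \pmod 4$) come from counting twisted fibres: the base group acts freely on generic orbits of the $2k$ twisted fibres, and it cannot fix a twisted fibre compatibly with commuting with $g$.

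\emph{Step 3: uniqueness and exhaustiveness.} Uniqueness follows from conjugacy invariants: $(s,t)$, the number and genera of fixed curves of positive genus (these are birational invariants), and whether $G'$ contains twisting elements. The main obstacle is Step 2. There one must check that every lift of the base action commuting with $G'$ can be conjugated into exactly the stated form, and one must rule out configurations such as $t=2$ with $G'\cong\mathbb Z_2$ twisting. The first thing to try is a cohomological computation: $H^1(\pi(G),\mathrm{PGL}(2,\mathbb C(x)))$ restricted to the centralizer of $G'$, worked out explicitly on the fixed hyperelliptic curve.
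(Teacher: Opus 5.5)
The paper gives no proof of this theorem; it simply quotes Blanc's Theorem B. So your text has to stand on its own as a reconstruction. The overall architecture is the right one: the split sequence, twisting versus non-twisting, and normal forms in $\mathrm{PGL}(2,\mathbb C(x))$. But there are genuine gaps, starting in Step 1.

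\textbf{Step 1, the case \textbf{(P1s.222)}.}
\begin{itemize}
\item The map $(x,y)\mapsto(x,x/y)$ occurring in \textbf{(P1s.222)} is not an automorphism of $\PP^1\times\PP^1$. Automorphisms preserving the first ruling form $\mathrm{PGL}(2,\mathbb C)\times\mathrm{PGL}(2,\mathbb C)$, because a morphism $\PP^1\to\mathrm{PGL}(2,\mathbb C)$ is constant.
\item The map is undefined at $(0,0)$ and $(\infty,\infty)$. After blowing these up, it exchanges the two components of the fibres over $x=0,\infty$. So it is a twisting involution with $k=1$, whose fixed curve $y^2=x$ is rational.
\item Its $G'=\langle y\mapsto -y,\ y\mapsto x/y\rangle$ is not conjugate in $\mathrm{PGL}(2,\mathbb C(x))$ to a group of constant M\"obius maps, since the fixed points $\pm\sqrt x$ are not defined over $\mathbb C(x)$.
\end{itemize}
Hence this case cannot come out of your no-twisting branch. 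Your Step 2 does not treat twisting involutions of genus $0$ either, and Lemma \ref{lem:bl} as quoted (with $k-1>0$) would even exclude them.

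\textbf{Step 1, the split of $G'=0$.} Your criterion for separating \textbf{(P1.221)} from \textbf{(P1.22.1)} (whether $y$ can be chosen invariant) is empty. When $G'=0$, $G$ acts faithfully on $\mathbb C(x)$, and the generic fibre descends to a conic over $\mathbb C(x)^G\cong\mathbb C(t)$. This conic has a rational point by Tsen's theorem, so an invariant fibre coordinate always exists. This is exactly the $H^1$ computation you propose at the end: it vanishes by Hilbert 90 together with $\mathrm{Br}(\mathbb C(t))=0$. Concretely, $s=(x+x^{-1})y$ is invariant under $(x,y)\mapsto(x^{-1},y)$ and $(x,y)\mapsto(-x,-y)$. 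The map $(x,y)\mapsto(x,(x+x^{-1})y)$ therefore conjugates \textbf{(P1.221)}, as written here, onto \textbf{(P1.22.1)}. So for the normal forms as quoted, the ``only one'' clause is actually false for this pair. You have to work from Blanc's original list.

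\textbf{Step 3, uniqueness.} The pair $(s,t)$ is attached to a chosen invariant conic bundle, not to a conjugacy class. For instance, \textbf{(P1.22.1)} preserves both rulings, with $(s,t)=(0,2)$ for the first and $(2,0)$ for the second. Moreover, none of your invariants separates \textbf{(0.22)} from \textbf{(P1.22.1)}: neither has twisting elements or fixed curves of positive genus. What does separate them is, for example, the existence of a point fixed by all of $G$. For abelian $G$ this property survives equivariant blow-ups and blow-downs. \textbf{(0.22)} fixes $(0,0)$, while \textbf{(P1.22.1)} fixes no point.

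\textbf{Step 2, the congruence conditions.} Your orbit count only shows that $|\pi(G)|$ divides $2k$. That gives neither $k$ even for \textbf{(C.2,21)}, nor $4\mid k$ for \textbf{(C.2,22)}, nor the genus conditions. In fact, take
$$\langle (x,y)\mapsto(x,Q(x^2)/y),\ (x,y)\mapsto(-x,y)\rangle,$$
with $Q$ of odd degree $k\geq3$ and simple non-zero roots. This is a $\mathbb Z_2^2$ in the invariant conic bundle case. Its only involution fixing a curve of positive genus fixes $y^2=Q(x^2)$, which has even genus $k-1$. So no counting argument can force $k$ to be even. The parity conditions come from the choice of normal form:
\begin{itemize}
\item how the $2k$ branch points are split into $\{a_i\}$ and $\{b_i\}$;
\item which of the two lifts of the base involution to the fixed hyperelliptic curve actually occurs. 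The two lifts differ by the hyperelliptic involution and in general give non-conjugate groups.
\end{itemize}
That normalisation, together with a proof that it is exhaustive, is the real content of Blanc's argument, and your sketch leaves it out. Since the example above does not seem to fit the list as quoted here, check it against Blanc's original statement before attempting a proof.
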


\subsection{Blanc's classification in Del Pezzo case}

Here this is the result  (see again \cite [Chapt. 10, Thm. B] {Bl}):

\begin{thm}\label{thm:class2} A subgroup $G\cong \mathbb Z_2^r$, with $r>1$, of the Cremona group of the plane that presents the Del Pezzo case is conjugate, in the Cremona group, to one (and only one) of the following:\\
\begin{inparaenum}
\item[{\bf (4.222) -- (4.2222)}] two groups of automorphisms of the Del Pezzo surface of degree 4 in $\PP^4$, obtained by blowing up the plane  at the four coordinate points plus at the point $(a:b:c)$, that is defined in $\PP^4$ by the two quadratic equations
\begin{align}\label{eq:dp4}
&cx_1^2-ax^2_3 -(a - c)x^2_4 -ac(a - c)x^2_5 = 0\\
&cx_2^2-bx^2_3 -(c - b)x^2_4 -bc(c - b)x^2_5 = 0.\nonumber
\end{align}
The two groups have $r=3$ (case (4.222)), and
\begin{equation}\label{eq:GG}
G=\{(x_1 : x_2 : x_3 : x_4 : x_5) \mapsto (\pm x_1 : \pm x_2 : \pm x_3 : x_4 : x_5)\},
\end{equation}
and $r=4$ (case (4.2222)), and
\begin{equation}\label{eq:GGG}
G=\{(x_1 : x_2 : x_3 : x_4 : x_5) \mapsto (\pm x_1 : \pm x_2 : \pm x_3 : \pm x_4 : x_5)\};
\end{equation}
\item [{\bf (2.G2) -- (2.G22)}] two groups of automorphisms of Del Pezzo surfaces of degree 2 obtained as a double cover of the plane branched along a smooth quartic curve $\Gamma$ of equation $f(x,y,z)=0$. The two groups contain the Bertini involution $\sigma$ and are of the form $G=\langle \sigma \rangle \times H$, where $H\subset 
 {\rm PGL}(3, \mathbb C)$ is a group of automorphisms of
the quartic curve $\Gamma$. 

We have the case $r=2$ (case (2.G2)) in which
$\Gamma$ has equation of the form 
$$
f_4(x, y) + f_2(x, y)z^2 + z^4=0
$$
with $f_i(x,y)$ homogeneous polynomials of degree $i\in \{2,4\}$ and 
$H\cong \mathbb Z_2$ is generated by the involution $(x:y:z)\mapsto (x:y:-z)$. 

We have the case $r=3$ (case (2.G22)) in which
$\Gamma$ has equation of the form 
$$
f_2(x^2, y^2,z^2)=0
$$
with $f_2(x,y,z)$ homogeneous polynomial of degree 2 and 
$H\cong \mathbb Z_2^2$ is generated by the involutions $(x:y:z)\mapsto (x:-y:z)$ and $(x:y:z)\mapsto (x:y:-z)$;\\
\item [{\bf (1.B2.1)}] $r=2$ a group of automorphisms of a Del Pezzo surface of degree 1, that is the double cover of a quadric cone $Q$ in $\PP^3$ branched along a smooth curve $\Gamma$ cut out on $Q$ by a cubic hypersurface and at the vertex of the cone. The group $G$ contains the Geiser involution $\tau$ and $G=\langle \tau\rangle \times H$, with $H\cong \mathbb Z_2$ generated by a projective involution of the cone $Q$ fixing the curve $\Gamma$. 
\end{inparaenum}
\end{thm}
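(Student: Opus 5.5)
This theorem is quoted from Blanc's thesis, so within the paper it can simply be cited. If one wanted to reconstruct it, I would combine Proposition \ref{prop:bla}(i) with the classification of automorphism groups of Del Pezzo surfaces.

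In the Del Pezzo case $S$ is a Del Pezzo surface of degree $d=K_S^2\in\{1,\dots,9\}$. Moreover $G\cong\mathbb Z_2^r$ acts minimally with $\Pic(S)^G\otimes\mathbb Q=\mathbb Q K_S$. The first step is to go through the degrees and discard those for which the pair cannot be minimal or is conjugate to a conic bundle case.

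For $d=9$ ($S=\PP^2$), a $\mathbb Z_2^2\subset {\rm PGL}(3)$ is diagonalizable. It fixes a point, and blowing that point up gives an invariant conic bundle, so the group appears in Theorem \ref{thm:class1}. For $d=8$ ($\PP^1\times\PP^1$) and for $d=6,5$, I would show that an elementary abelian $2$-group either preserves a ruling or is conjugate to a group preserving one. In degree $5$ the group ${\rm Aut}=S_5$ has $2$-subgroups fixing a $(-1)$-curve or a conic class. In degree $6$ one uses the elementary link to $\PP^1\times\PP^1$. Degrees $7$ and $3$ are handled similarly. In degree $7$ the unique $(-1)$-curve meeting the other two is invariant, so the pair is not minimal. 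In degree $3$ every involution of a cubic surface fixes a line or a point off the surface, and one produces a $G$-invariant conic bundle or a $G$-invariant $(-1)$-curve.

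For the remaining degrees $4,2,1$, I would work with the explicit models. In degree $4$, ${\rm Aut}(S)\supseteq\mathbb Z_2^4$ acts by sign changes on the diagonal pencil of quadrics \eqref{eq:dp4}. Then one shows that $\Pic^G$ has rank $1$ exactly for the subgroups \eqref{eq:GG} and \eqref{eq:GGG}, up to conjugation. This is done by computing the traces of the involutions on $K_S^\perp\cong D_5$ and averaging. In degree $2$ the Bertini involution $\sigma$ is central and acts as $-1$ on $K_S^\perp$. A $G$ with invariant rank $1$ must contain $\sigma$, since otherwise $G$ embeds in ${\rm Aut}(\Gamma)$, and the averaging argument shows an invariant class besides $K_S$. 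So $G=\langle\sigma\rangle\times H$ with $H\subset{\rm Aut}(\Gamma)$ elementary abelian. Normal forms of quartics with an involution, or with a $\mathbb Z_2^2$ of automorphisms, then give (2.G2) and (2.G22). Degree $1$ is similar, using the Geiser involution $\tau$ and the model as a double cone.

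The main obstacle is the conjugacy bookkeeping. One must show that the groups eliminated in degrees $\geq 3$ are genuinely conjugate to conic bundle groups, and that the listed cases are pairwise non-conjugate. Both need Blanc's theory of links and the invariant of fixed curves of positive genus, which is why I would cite \cite{Bl} rather than redo this.
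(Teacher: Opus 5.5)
Your primary route is the same as the paper's. The paper gives no proof of this theorem; it simply quotes it from Blanc's thesis (Chapt.~10, Thm.~B), which is what you propose to do in the end.

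The reconstruction you sketch, however, has a step that fails as stated. In degree $4$, the invariant-rank condition does not single out \eqref{eq:GG} and \eqref{eq:GGG}.

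\emph{The counterexample.} Let $\iota_i$ be the sign change of $x_i$ and $\iota_{ij}=\iota_i\iota_j$. The fixed locus of $\iota_i$ on $S$ is the smooth elliptic curve $S\cap\{x_i=0\}$, since the isolated point $e_i$ is not on $S$. The fixed locus of $\iota_{ij}$ is the four points $S\cap\{x_i=x_j=0\}$. By the Lefschetz formula, their traces on $K_S^\perp$ are $-3$ and $1$. The averaging you propose then gives, for $G=\langle\iota_1,\iota_2\rangle\cong\mathbb Z_2^2$,
\[
\dim (K_S^\perp)^G=\tfrac14(5-3-3+1)=0 .
\]
So $(S,G)$ is a minimal pair with $\Pic(S)^G$ of rank $1$, yet it is not in the list.

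\emph{How it has to be removed.} This group must be discarded by a link, not by the rank count. Its four fixed points $S\cap\{x_1=x_2=0\}$ lie on no line. Indeed, the sign-change group acts freely on the $16$ lines, so a line through a fixed point would give four distinct concurrent lines. But no three lines on a quartic Del Pezzo surface are concurrent. Blowing up one of these points gives a cubic surface with a $G$-invariant line $E$, and $|-K-E|$ is then a $G$-invariant conic bundle. In fact $G$ is conjugate to a group of type {\bf (C.22)}, whose two generators fix the elliptic bisections coming from $S\cap\{x_1=0\}$ and $S\cap\{x_2=0\}$.

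So degree $4$ needs this extra step. You must also treat elementary abelian subgroups of ${\rm Aut}(S)$ that are not contained in the normal $\mathbb Z_2^4$.

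Two smaller slips:
\begin{enumerate}
\item The isolated fixed point of a reflection of a cubic surface lies on the surface; it is an Eckardt point, not a point off the surface. The exclusion of degree $3$ still holds, but not for the reason you give.
\item In degree $1$, "similar" hides a real check. You must verify that no $\mathbb Z_2^2$ of automorphisms of the cone preserving $\Gamma$ lifts to an elementary abelian group on $S$. Lifts of a Klein group acting faithfully on the base commute only up to $\tau$, and the involution $z\mapsto -z$ lifts with order $4$. This is what rules out $r=3$ there.
\end{enumerate}
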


\section{$\mathbb Z_2^r$ covers of the plane} \label{sec:pard}

In this section, following Pardini's paper \cite {Par}, I will recall the theory of $\mathbb Z_2^r$ covers of the plane, and, in view of Blanc's cassification recalled above, I will especially focus on the cases $2\leq r\leq 4$. 

\subsection{Generalities} I will consider the situation of a finite $\mathbb Z_2^r$ cover $f: S\longrightarrow S'$, with $S, S'$ projective irreducible surfaces and $S'$  smooth  (in most cases $S'$ for us will be $\PP^2$). If $G\cong \mathbb Z_2^r$, I will denote by $G^*\cong \mathbb Z_2^r$ the group of characters of $G$. Then
one has the following splitting of $f_*(\mathcal O_S)$ as a sum of line bundles
\begin{equation}\label{eq:reg}
f_*(\mathcal O_S)= \oplus _{\chi\in G^*}L^{-1}_\chi,
\end{equation}
where $L_0\cong \mathcal O_{S'}$. Actually $G$ acts on the sheaf of algebras $f_*(\mathcal O_S)$ and  $G$ acts on $L^{-1}_\chi$ via the character $\chi$. The algebra structure on $f_*(\mathcal O_S)$ is compatible with the action of $G$, so the multiplication
is determined by linear maps 

$$
\mu_{\chi, \chi'}: L^{-1}_\chi\otimes L^{-1}_{\chi'}\longrightarrow L^{-1}_{\chi\chi'}\quad \text{for}\quad \chi, \chi'\in G^*.
$$

Let us denote by $R$ the ramification curve on $S$ and by $D$ the branch curve of $f$ on $S'$. Let  $T$ be an irreducible  component of $R$. Then the \emph{inertia group} $H_T$ of $T$ is defined as
$$
 H_T=\{g\in  G: g x = x, \,\, \forall x \in T\}.
$$

It is known that $H_T$ is a cyclic subgroup of $G$ (see \cite[Lem. 1.1]{Par}), in particular there is a unique non--zero $g_T\in G$ such that $H_T=\langle g_T\rangle$. If $D'$ is any irreducible component of $D$, then all the irreducible components of $f^{-1}(D')$ have the same inertia group, with generator $g_{D'}$. In particular if $D'$ is also reduced, for a general point $p\in D'$, $f^{-1}(p)$ consists of $t:=2^{r-1}$ distinct points $p_1,\ldots , p_t$ where there is a simple ramification, and specifically $g_{D'}(p_i)=p_i$ for  $1\leq i\leq t$. Therefore $D'$ appears with multiplicity $t$ in the branch locus scheme of $f$. 

So we may associate a non--zero element 
 $g_{D'}\in G$ to every irreducible and reduced component $D'$ of $D$. Conversely, for any non--zero $g\in G$, we can denote by $D_g$ the union of all the components of the branch curve whose associated cyclic non--zero element is  $g\in G$. I will call $D_g$ the \emph{$g$--component} of the branch curve.  Notice that given $g$, $D_g$ can be zero. In conclusion, we can write the branch curve as
$$
D=\sum_{g\in G\setminus \{0\}}D_g.
$$

The $\mathbb Z_2^r$ cover $f: S\longrightarrow S'$ is said to be \emph{totally ramified} if  $\mathbb Z_2^r$ is generated by all elements $g$ such that $D_g$ is non--zero. If $S'$ is irreducible, then $S$ is also irreducible if and only if $f: S\longrightarrow S'$ is totally ramified. In this paper I will consider only totally ramified $\mathbb Z_2^r$ covers.

\subsection{Building data} In \cite [Def. 2.1]{Par}, Pardini defines the set of line bundles $\{L_\chi\}_{\chi\in G^*}$ and the set of divisors $\{D_g\}_{g\in G\setminus \{0\}}$ to be the \emph{building data} of the $\mathbb Z_2^r$--cover $f: S\longrightarrow S'$.  I will call the set of divisors $\{D_g\}_{g\in G\setminus \{0\}}$ the \emph{branch data} of the $\mathbb Z_2^r$--cover.

For every character $\chi\in G^*$ and for every non--zero $g\in G$, we may have either $\chi(g)=0$ or $\chi(g)\neq 0$. I set $\epsilon_\chi(g)=0$ in the former case and $\epsilon_\chi(g)=1$ in the latter. Given two characters $\chi, \chi'\in G^*$ and any non--zero $g\in G$, I set 
$$
\epsilon_{\chi,\chi'}(g)= \left\{
\begin{array}{r}
1, \,\, \text{if $\epsilon_\chi(g)+\epsilon_{\chi'}(g)=2$}
\\
0, \,\, \text{if $\epsilon_\chi(g)+\epsilon_{\chi'}(g)<2$}
\end{array}
\right.
$$
Pardini proves that
\begin{equation}\label{eq:prod}
L_\chi\otimes L_{\chi'}\sim L_{\chi\chi'}+\sum_{g\in G\setminus \{0\}}\epsilon_{\chi,\chi'}(g) D_g, \quad \text{for}\quad \chi, \chi'\in G^*.
\end{equation}
(where $\sim$ denotes linear equivalence). 

One of the main results in \cite {Par} is Theorem 2.1 of that paper, to the effect that a set of building data satisfying \eqref 
{eq:prod} uniquely determines a $\mathbb Z_2^r$ cover $f: S\longrightarrow S'$ up to isomorphisms. 

In fact all building data are redundant for recovering the cover. Indeed Pardini defines in \cite [Def. 2.3]{Par} a \emph{reduced set of building data} as follows. We know that $G^*\cong \mathbb Z_2^r$, so we can determine $\chi_1,\ldots, \chi_r$ a minimal set of generators of $G^*$ and one writes $L_i=L_{\chi_i}$, for $1\leq i\leq r$. Then set of line bundles $\{L_i\}_{1\leq i\leq r}$ and the set of divisors $\{D_g\}_{g\in G\setminus \{0\}}$ is a reduced set  of building data of the $\mathbb Z_2^r$--cover $f: S\longrightarrow S'$.  In view of \eqref {eq:prod} a reduced set of building data suffices to recover a full set of building data. More precisely, one proves that 
\begin{equation}\label{eq:prodred}
2L_i \sim \sum_{g\in G\setminus \{0\}} \epsilon_{\chi_i}(g)D_g,\,\, 1\leq i\leq r
\end{equation}
and a reduced set  of building data verifying \eqref {eq:prodred} uniquely determines a $\mathbb Z_2^r$ cover $f: S\longrightarrow S'$ up to isomorphisms (see \cite [Prop. 2.1]{Par}).

 Note that, if $S'$ has no 2--torsion, then the branch data determine, via \eqref {eq:prodred}, the line bundles $L_i$, for $1\leq i\leq r$, and hence all the line bundles $L_\chi$, for $\chi\in G^*$, via \eqref {eq:prod}. 
 
 \subsection{Building data for $2\leq r\leq 4$}
 
 In this section I collect the explicit form of the building data and their relations  \eqref {eq:prod} for $\mathbb Z_2^r$ covers, for $2\leq r\leq 4$, that, in view of Blanc's classification, I will need later. 
 
 I will consider, for all $g\in \mathbb Z_2^r$, the obvious notation
$g=(\gamma_1,\ldots, \gamma_r)$, with $\gamma_i\in \{0,1\}$. If I use $g$ as an index like $a_g$, I simply write $a_{\gamma_1\ldots \gamma_r}$.
 
 \subsubsection{The building data for $r=2$}\label{ssec:2}  The  building data of a $\mathbb Z_2^2$--cover $f: S\longrightarrow \PP^2$ are:\\
\begin{inparaenum}[(i)]
\item three plane curves $D_{10}, D_{01}, D_{11}$ of degrees 
$d_{10}, d_{01}, d_{11}$ respectively;\\
\item three line bundles  $L_{10}\cong \mathcal O_{\PP^2}(a_{10})$, 
$L_{01}\cong \mathcal O_{\PP^2}(a_{01})$ and $L_{11}\cong \mathcal O_{\PP^2}(a_{11})$;\\
\item the relations \eqref {eq:prod} read
$$
\begin{array}{c}
2L_{10} \sim D_{10}+D_{11}
\\
2L_{01} \sim D_{01}+D_{11}
\\
2L_{11} \sim D_{01}+D_{10}
\\
L_{10}\otimes L_{01}\sim L_{11}+D_{11}
\\
L_{10}\otimes L_{11}\sim L_{01}+D_{01}
\\
L_{01}\otimes L_{11}\sim L_{10}+D_{10}
\end{array}
$$
that imply
$$
\begin{array}{r}
2a_{10} = d_{10}+d_{11}
\\
2a_{01} = d_{01}+d_{11}
\\
2a_{11} = d_{10}+d_{01}
\end{array}
$$
so that $d_{10}, d_{01},d_{11}$ have the same parity.\end{inparaenum}

\subsubsection{The building data for $r=3$} \label{ssec:bd3} The  building data of a $\mathbb Z_2^3$--cover $f: S\longrightarrow \PP^2$ are:\\
\begin{inparaenum}[(i)]
\item seven plane curves $D_g$ of degree $d_g$, for all $g\in  \mathbb Z_2^3\setminus \{0\}$;\\
\item seven line bundles  $L_g$, for all $g\in  \mathbb Z_2^3\setminus \{0\}$;\\
\item the relations \eqref {eq:prod} read
$$
\begin{array}{c}
2L_{100} \sim D_{100}+D_{110}+D_{101}+D_{111}
\\
2L_{010} \sim D_{010}+D_{110}+D_{011}+D_{111}
\\
2L_{001} \sim D_{001}+D_{101}+D_{011}+D_{111}
\\
L_{110}\sim L_{100}\otimes L_{010}-D_{110}-D_{111}
\\
L_{101}\sim L_{100}\otimes L_{001}-D_{101}-D_{111}
\\
L_{011}\sim L_{010}\otimes L_{001}-D_{011}-D_{111}
\\
L_{111}\sim L_{110}\otimes L_{001}-D_{011}-D_{101}.
\end{array}
$$
\end{inparaenum}

\subsubsection{The building data for $r=4$} \label{ssec:bd3} The  building data of a $\mathbb Z_2^4$--cover $f: S\longrightarrow \PP^2$ are:\\
\begin{inparaenum}[(i)]
\item 15 plane curves $D_g$ of degree $d_g$, for all $g\in  \mathbb Z_2^3\setminus \{0\}$;\\
\item 15 line bundles  $L_g$, for all $g\in  \mathbb Z_2^3\setminus \{0\}$;\\
\item the relations \eqref {eq:prod} read
\begin{equation}\label{eq:data}
\begin{array}{c}
2L_{1000} \sim D_{1000}+D_{1100}+D_{1010}+D_{1001}+D_{1110}+D_{1101}+D_{1011}+D_{1111}
\\
2L_{0100} \sim D_{0100}+D_{1100}+D_{0110}+D_{0101}+D_{1110}+D_{1101}+D_{0111}+D_{1111}
\\
2L_{0010} \sim D_{0010}+D_{1010}+D_{0110}+D_{0011}+D_{1110}+D_{1011}+D_{0111}+D_{1111}
\\
2L_{0001} \sim D_{0001}+D_{1001}+D_{0101}+D_{0011}+D_{1101}+D_{1011}+D_{0111}+D_{1111}
\\
L_{1100}\sim L_{1000}\otimes L_{0100}-D_{1100}-D_{1110}-D_{1101}-D_{1111}
\\
L_{1010}\sim L_{1000}\otimes L_{0010}-D_{1010}-D_{1110}-D_{1011}-D_{1111}
\\
L_{1001}\sim L_{1000}\otimes L_{0001}-D_{1001}-D_{1101}-D_{1010}-D_{1111}
\\
L_{0110}\sim L_{0100}\otimes L_{0010}-D_{0110}-D_{1110}-D_{0111}-D_{1111}
\\
L_{0101}\sim L_{0100}\otimes L_{0001}-D_{0101}-D_{1101}-D_{0111}-D_{1111}
\\
L_{0011}\sim L_{0010}\otimes L_{0001}-D_{0011}-D_{1011}-D_{0111}-D_{1111}
\\
L_{1110}\sim L_{1000}\otimes L_{0110}-D_{1010}-D_{1100}-D_{1101}-D_{1010}
\\
L_{1011}\sim L_{1000}\otimes L_{0011}-D_{1010}-D_{1001}-D_{1110}-D_{1101}
\\
L_{0111}\sim L_{0110}\otimes L_{0001}-D_{0101}-D_{0011}-D_{1011}-D_{1101}
\\
L_{1101}\sim L_{1100}\otimes L_{0001}-D_{1001}-D_{0101}-D_{0111}-D_{1011}
\\
L_{1111}\sim L_{1100}\otimes L_{0011}-D_{1010}-D_{1001}-D_{0110}-D_{0101}.
\end{array}
\end{equation}
\end{inparaenum}

\subsection{Normalization} \label {sss:norm} Given a $\mathbb Z_2^r$--cover $f: S\longrightarrow S'$ with  building data $\{L_\chi\}_{\chi\in G^*}$ and  branch data $\{D_g\}_{g\in G\setminus \{0\}}$, $S$ is normal if and only if each irreducible component of one of the non--zero divisors $\{D_g\}_{g\in G\setminus \{0\}}$, appears in the  branch curve $D=\sum_{g\in G\setminus \{0\}}D_g$ with multiplicity 1 (see \cite [Cor. 3.1]{Par}). Pardini describes in \cite [p. 203]{Par} the  process that brings to the normalization of a non--normal $S$ as above. In our situation, this process reduces to the following two steps.\\
\begin{inparaenum}
\item [{\bf Step 1:}] Suppose there is an irreducible curve $\Delta$ that appears in the  branch curve $D$ with multiplicity greater than 1. Consider first a non--zero element $g\in G$ such that one has $D_g=k_g\Delta+\bar D_g$, where $k_g\geq 2$ and $\bar D_g$ not containing $\Delta$ as a component. Write $k_g=2q_g+r_g$, with $0\leq r_g\leq 1$. Then we define
$D'_g=D_g-2q_g\Delta$ and for all $\chi\in G^*\setminus \{0\}$ define
$$
L_\chi'= \left\{
\begin{array}{r}
L_\chi, \,\, \text{if $\chi(g)=0$}
\\
L_\chi(-q_g\Delta), \,\, \text{if $\chi(g)\neq 0$.}
\end{array}
\right.
$$
Repeat this for every non--zero element $g\in G$ such that one has $D_g=k_g\Delta+\bar D_g$, where $k_g\geq 2$ and for the other non--zero $g\in G$ we set 
$D'_g=D_g$ and do not change the $L_\chi$s. Then repeat this for every irreducible curve $\Delta$ that appears in the  branch curve $D$ with multiplicity greater than 1. In this way one constructs a new set of data that are seen to be building data of a new $\mathbb Z_2^r$--cover of $S'$. For these new data, that by abuse of notation I still denote by $\{L_\chi\}_{\chi\in G^*\setminus \{0\}}$ and  $\{D_g\}_{g\in G\setminus \{0\}}$, one has that for any non--zero $g\in G$, $D_g$ is  reduced. \\
\item [{\bf Step 2:}] Suppose next that there is a divisor $\Delta$ such that there are two distinct non--zero $g,h\in G$ such that $\Delta$ is contained in $D_g$ and $D_h$. 
Then we set $k=g+h$ and  $D'_g=D_g-\Delta$, 
$D'_h=D_h-\Delta$, $D'_k=D_k+\Delta$, and $D'_j=D_j$ for all $j\in G$ different from $g,h,k$. Moreover we set $L'_\chi=L_\chi(-\Delta)$, if $\chi(g)=\chi(h)=1$, whereas
we set $L'_\chi=L_\chi$ otherwise. Then repeat this process any time there is a non--zero irreducible curve  appearing in two curves $D_g, D_h$. 
One sees that this process ends (see \cite [Prop. 3.2]{Par}) and that one has at the end a new set of building data that correspond to a  $\mathbb Z_2^r$--cover $\bar f: \bar S\longrightarrow S'$, with $\bar S$ normal. 
\end{inparaenum}

Applying this process any time we have a $\mathbb Z_2^r$--cover $f: S\longrightarrow S'$ with $S'$ smooth, we may (and will) assume that $S$ is normal.  

\subsection{Normal singularities} Suppose next we have a $\mathbb Z_2^r$--cover $f: S\longrightarrow S'$ with $S'$ smooth and $S$ normal.  The following result is \cite [Prop. 3.1]{Par}:

\begin{proposition}\label{prop:par} Suppose  we have a $\mathbb Z_2^r$--cover $f: S\longrightarrow S'$ with $S'$ smooth and $S$ normal, with branch curve
$D=\sum_{g\in G\setminus\{0\}} D_g$. Then $S$ is singular over a point $x\in S$ if and only if $x\in B$ and moreover $x\in D_{g_i}$, with $1\leq i\leq r$, with $g_i\in G$ distinct, and either the obvious map 
$$
\oplus_{i=1}^r \langle g_i\rangle \longrightarrow G
$$
is not an injection or, if $b_i=0$ is a local equation of $D_{g_i}$ around $x$, for $1\leq i\leq r$, then $db_1,\ldots, db_r$ are not linearly independent in  the dual of the Zariski tangent space of $S$ and $x$.
\end{proposition}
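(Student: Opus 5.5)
The plan is to prove this by a purely local analysis over a point $x\in S'$, using the linearization of finite group actions and the Chevalley--Shephard--Todd theorem. The index $r$ in the statement counts the branch components through $x$, not the rank of $G$. The statement is local in the analytic (or étale) topology on $S'$. Fix $x\in S'$ and a small neighbourhood $U$. If $x\notin D$, then $f$ is étale over $U$, so $S$ is smooth over $x$. So assume $x\in D$. Let $D_{g_1},\dots,D_{g_r}$ be the $g$--components of $D$ through $x$, and let $H\subseteq G$ be the subgroup generated by $g_1,\dots,g_r$.

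\textbf{Step 1 (reduction to the stabilizer).} First I show that $H$ is the stabilizer of any point $y\in f^{-1}(x)$. Indeed $g_i$ fixes the components of $f^{-1}(D_{g_i})$, so $g_i\in H_y$. Conversely, the local cover obtained by dividing by $H$ is unramified over $U$; since $U$ is simply connected, it is trivial. Hence $f^{-1}(U)$ splits as $|G/H|$ disjoint copies of an $H$--cover $V\to U$, totally ramified over $x$. Therefore $S$ is smooth over $x$ if and only if $V$ is smooth at its unique point $y$ over $x$, and $V\to U$ is just the quotient map $V\to V/H$.

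\textbf{Step 2 ($V$ smooth at $y$ implies the stated condition).} Suppose $V$ is smooth at $y$. By Cartan's lemma the $H$--action is linearizable at $y$, so $H\subset GL(T_yV)\cong GL(2,\mathbb C)$. Since $V/H=U$ is smooth, Chevalley--Shephard--Todd says $H$ is generated by pseudoreflections. An elementary abelian $2$--group in $GL(2,\mathbb C)$ is diagonalizable, so its elements are among $\mathrm{diag}(\pm1,\pm1)$. The pseudoreflections in it are at most $\mathrm{diag}(-1,1)$ and $\mathrm{diag}(1,-1)$, so $H\cong\mathbb Z_2^k$ with $k\le 2$. The fixed curves of these reflections are the two coordinate axes, whose images are the only branch curves through $x$. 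Hence at most two branches pass through $x$, they come from independent elements of $G$, and they are images of the smooth transversal axes. Explicitly, in the case $k=2$, $U\cong\{(u^2,v^2)\}$ and $b_1=u^2$, $b_2=v^2$. Consequently $r\le 2$, the map $\oplus\langle g_i\rangle\to G$ is injective, and $db_1,\dots,db_r$ are linearly independent.

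\textbf{Step 3 (the converse).} Suppose the map $\oplus\langle g_i\rangle\to G$ is injective and the $db_i$ are independent. Then $r\le 2$, $H\cong\mathbb Z_2^r$, and $b_1,\dots,b_r$ can be completed to local coordinates on $U$. By Pardini's building data restricted to $U$, where all line bundles are trivial, the normal $H$--cover with branch data $\{b_i=0\}$ labelled by the independent $g_i$ is given by $z_i^2=b_i$ for $i=1,\dots,r$. This is visibly smooth, with coordinates $z_1,z_2$ or $z_1,v$. By uniqueness of the cover given its building data (Pardini, Thm. 2.1 and Prop. 2.1, applied to the restriction over $U$) and normality of $S$, this coincides with $V$, so $V$ is smooth at $y$.

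I expect the main obstacle to be Step 1: identifying the local cover over $U$ with $G/H$ copies of an $H$--cover whose branch data are exactly the $D_{g_i}$ through $x$. This requires checking that the inertia groups of the branches generate the full stabilizer, which is the triviality of the unramified part over a simply connected neighbourhood. The rest is the standard reflection-group argument.
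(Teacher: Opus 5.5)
The paper gives no argument for this statement; it only quotes Pardini's Proposition~3.1. Your proof is a correct, self-contained version of the standard local argument behind that criterion: the quotient by the subgroup $H$ generated by the inertia elements is \'etale over a small ball by purity, so $H$ is the stabilizer; linearizing at $y$ reduces smoothness to a reflection-group computation inside $\{\mathrm{diag}(\pm1,\pm1)\}$; and the converse follows from the local model $z_i^2=b_i$. A small simplification: Chevalley--Shephard--Todd is not needed in Step~2, because each $g_i$ fixes pointwise a curve through $y$, hence is already a reflection after linearization, so $H$ is generated by reflections by construction.
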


In particular,  $S$ is smooth if the divisors $D_g$ are smooth for all non--zero $g\in G$ and $D$ has normal crossings. 

If $S$ is normal and singular, one can resolve the singularities of $S$ in the following way. Let $x\in S'$ be a point over which $S$ is singular. Let $\pi: \tilde S'\longrightarrow S' $ be the blow--up of $S'$ at $x$. Consider the Cartesian diagram
$$
 \label{eq:compos0}
  \xymatrix{
    \tilde S \ar[d]_{f'}  \ar[rr]^{\pi'}  &  &  \ar[d]^{f} S  \\
    \tilde S'  \ar[rr]_{\pi } & &S'
}
$$
so that $f': \tilde S\longrightarrow \tilde S'$ is a $\mathbb Z_2^r$--cover whose building data are just the pull back to $\tilde S'$ of the building data of $f: S\longrightarrow S'$ via $\pi$. Then normalize $\tilde S$ with the recipe given above, and proceed further. 

\subsection{Invariants} One has (see \cite [Prop. 4.2]{Par}):

\begin{proposition}\label{prop:inv} Consider a $G$--cover $f: S\longrightarrow S'$, with $G\cong \mathbb Z_2^r$ and $S, S'$ both smooth. Then one has
\begin{equation}\label{eq:chi}
\chi(\mathcal O_S)=2^r\chi(\mathcal O_{S'})+ \frac 12 \sum_{\chi\in G^*\setminus \{0\}} L_\chi \cdot (L_\chi+K_{S'}),
\end{equation}
and
\begin{equation}\label{eq:K2}
K_S^2=2^r\Big ( K_{S'}+\sum_{g\in G\setminus \{0\}} \frac 12 D_g\Big )^2.
\end{equation}

\end{proposition}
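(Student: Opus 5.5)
The two formulas are local-to-global computations for the flat finite morphism $f$. I would treat them separately: \eqref{eq:chi} via the eigensheaf splitting \eqref{eq:reg}, and \eqref{eq:K2} via the ramification formula for $K_S$.

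For \eqref{eq:chi}, I would use that $f$ is finite, so $\chi(\mathcal O_S)=\chi(S',f_*\mathcal O_S)$. By \eqref{eq:reg} this gives
$$\chi(\mathcal O_S)=\sum_{\chi\in G^*}\chi(L_\chi^{-1})=\chi(\mathcal O_{S'})+\sum_{\chi\neq 0}\chi(L_\chi^{-1}).$$
Riemann--Roch on the smooth surface $S'$ gives
$$\chi(L_\chi^{-1})=\chi(\mathcal O_{S'})+\tfrac12 L_\chi\cdot(L_\chi+K_{S'}).$$
Summing over the $2^r-1$ nontrivial characters yields \eqref{eq:chi}. Smoothness of $S$ is not needed here; flatness of $f$, which follows from \eqref{eq:reg}, is enough.

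For \eqref{eq:K2}, I would first establish the canonical bundle formula
$$2K_S\sim f^*\Big(2K_{S'}+\sum_{g\neq 0}D_g\Big).$$
Both $S$ and $S'$ are smooth, so Hurwitz gives $K_S=f^*K_{S'}+R$, where $R$ is the ramification divisor. Over each component $D'$ of $D_g$, the inertia group is $\langle g\rangle\cong\mathbb Z_2$ by the discussion preceding the building data. So $f^{-1}(D')$ consists of reduced components $T$, each with ramification index $2$. Hence $f^*D_g=2R_g$, where $R_g=\sum_T T$, and therefore $R=\sum_g R_g$ and $2R=f^*\sum_g D_g$. Since $S'$ is smooth and $D_g$ reduced (by the normality discussion, Step 1 and Step 2), I would check the index claim étale-locally: at a general point of $D_g$ the cover is the product of a double cover branched along $D_g$ and an étale cover of degree $2^{r-1}$. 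Then, working in $\Pic(S)\otimes\mathbb Q$,
$$K_S\equiv f^*\Big(K_{S'}+\tfrac12\sum_{g\neq 0}D_g\Big).$$
Intersection numbers multiply by $\deg f=2^r$ under pullback, which gives \eqref{eq:K2}.

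The main obstacle is the precise identification $f^*D_g=2R_g$, with no other ramification. This uses that isolated fixed points of elements of $G$ contribute nothing in codimension $1$, and that ramification only occurs over $D$. Since $K$ is determined in codimension $1$, this suffices. Everything else is formal.
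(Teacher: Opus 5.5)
Your proof is correct, and it is essentially the argument behind the result the paper quotes without proof from Pardini (Prop.~4.2 there): Riemann--Roch applied to each summand $L_\chi^{-1}$ of $f_*\mathcal O_S$ gives \eqref{eq:chi}, and the Hurwitz formula in the form $2K_S\sim f^*\bigl(2K_{S'}+\sum_{g\neq 0}D_g\bigr)$ gives \eqref{eq:K2}; the paper itself uses this form of Hurwitz repeatedly in Section~\ref{sec:classinva}. The one point that needs care is that every ramification index equals $2$, so that $f^*D_g=2R_g$. You handle it correctly, since every nontrivial cyclic subgroup of $\mathbb Z_2^r$ has order exactly $2$.
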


\section{The classification in the invariant  conic bundle case}\label{sec:classinv}

In this section I consider a $\mathbb Z_2^r$--cover $f: S\longrightarrow \PP^2$ with $S$ rational and normal (and $r>1$), and moreover I will consider the invariant conic bundle case. As we saw in Theorem \ref {thm:class1}, we have $2\leq r\leq 4$. I will consider the minimal desingularization $\phi: S'\longrightarrow S$. I will assume in this section that the pair $(S', \mathbb Z_2^r)$ is a minimal $\mathbb Z_2^r$--surface and that ${\rm Pic}(S')^{\mathbb Z_2^r}$ has rank 2, generated over $\mathbb Q$ by the class of $K_{S'}$ and by the class of a  general fibre $F$ of a pencil $|F|$ of rational curves that is fixed by the  $\mathbb Z_2^r$ action (see Proposition \ref {prop:bla}). 

The main remark is that the image of $|F|$ to the plane via the composite map $f\circ \phi$ is a pencil of rational curves. Then, up to a Cremona transformation of the plane, this pencil can be assumed to be the pencil $\mathcal F$ of lines passing through a given point $p\in \PP^2$. I will assume from now on that  this is the case.

\subsection{The case $r=2$} Here I have $G=\mathbb Z_2^2$.

\subsubsection{The three intermediate double covers}\label{ssec:inter} Given a $\mathbb Z_2^2$--cover $f: S\longrightarrow \PP^2$, we have the three non--zero elements $(1,0), (0,1), (1,1)$  of $\mathbb Z_2^2$. By modding out $S$ by these elements, we have three double covers 
$$
f_{10}: S\longrightarrow S_{10}, \,\, f_{01}: S\longrightarrow S_{01}, \,\, f_{11}: S\longrightarrow S_{11}.
$$
 The map $f: S\longrightarrow \PP^2$ factors through each one of these double covers, i.e., we have three double covers
\begin{equation}\label{eq:rob}
g_{10}: S_{10}\longrightarrow \PP^2,\,\, g_{01}: S_{01}\longrightarrow \PP^2, \,\, g_{11}: S_{11}\longrightarrow \PP^2
\end{equation}
so that $f=g_{10}\circ f_{10}=g_{01}\circ f_{01}=g_{11}\circ f_{11}$. The branch curve of the double cover $g_{10}: S_{10}\longrightarrow \PP^2$ is the curve $D_{01}+D_{11}$ and similarly for the double covers $g_{01}: S_{01}\longrightarrow \PP^2$ and $g_{11}: S_{11}\longrightarrow \PP^2$. From this we see that at most one of the divisors $D_{10}, D_{01}, D_{11}$ is zero, otherwise $S$ would not be irreducible. Equivalently, if at least two among $D_{10}, D_{01}, D_{11}$ are zero, then the cover is not totally ramified. 

\subsubsection{The behaviour of $G'$ for $r=2$} Remember the sequence \eqref {eq:split}. Looking at Theorem \ref {thm:class1}, we have five cases to take care of:\\
\begin{inparaenum}[(i)]
\item cases in which $G'=0$, corresponding to {\bf (P1.221)} and {\bf (P1.22.1)} in Theorem \ref {thm:class1};\\
\item cases in which $G'\cong \mathbb Z_2$, corresponding to {\bf (0.22)} and {\bf (C.2,21)} in Theorem \ref {thm:class1};\\
\item case in which $G'=G=\mathbb Z_2^2$, corresponding to {\bf (C.22)} in Theorem \ref {thm:class1}.
\end{inparaenum}

 I assume that the invariant pencil $|F|$ by the $\mathbb Z_2^2$ action is mapped to the pencil $\mathcal F$ of lines through the point $p\in \PP^2$. Let $\ell\in \mathcal F$ be a general line in the pencil. Then, according to the above three cases for $G'$, we have the following possibilities:\\
\begin{inparaenum}[(i)]
\item the pull back of $\ell$ via $f$ consists of four distinct  curves in $|F|$, in which case $G'=0$, and there is no ramification point on $\ell$ (see cases {\bf (P1.221)} and {\bf (P1.22.1)});\\
\item the pull back of $\ell$ via $f$ consists of two distinct  curves in $|F|$, in which case $G'\cong \mathbb Z_2$, and (by Riemann--Hurwitz formula) there are 2 ramification points on $\ell$, each has to be counted with multiplicity 2 (see cases {\bf (0.22)} and {\bf (C.2,21)});\\
\item the pull back of $\ell$ via $f$ consists of a unique curve in $|F|$, in which case $G'=G\cong  \mathbb Z_2^2$, and (by Riemann--Hurwitz formula) there are 3 ramification points on $\ell$, each has to be counted with multiplicity 2 (see case {\bf (C.22)}).
\end{inparaenum}

\subsubsection{The classification in the $G'=0$ case for $r=2$}

\begin{example}\label{ex:class}  Consider the Hirzebruch surface $\mathbb F_1$ (the blow--up of $\PP^2$ at a point $p$) with its structure map $\mu: \mathbb F_1\longrightarrow \PP^1$ given by the pull back on $\mathbb F_1$ of the pencil $\mathcal F$. There is a totally ramified $\mathbb Z_2^2$--cover $g: \PP^1\longrightarrow \PP^1$ with branch data given by three distinct points of $\PP^1$. There is then a cartesian diagram
$$
 \label{eq:compos}
  \xymatrix{
    Y \ar[d]_{f'}  \ar[rr]^{\nu}  &  &  \ar[d]^{g} \PP^1  \\
    \mathbb F_1  \ar[rr]_{\mu} & & \PP^1
}
$$
with $f': Y\longrightarrow \mathbb F_1$ a smooth $\mathbb Z_2^2$--cover and 
$\nu: Y\longrightarrow \PP^1$  a conic bundle structure, so that $Y$ is rational.  The 
$\mathbb Z_2^2$--cover $f': Y\longrightarrow \mathbb F_1$ is branched along three distinct fibres of $\nu$.

If $E$ is the $(-1)$--curve on $\mathbb F_1$, the pull back of $E$ on $Y$ is an irreducible rational curve $\Delta$, with $\Delta^2=-4$. By contracting this curve to a point we have a birational map $\pi': Y\longrightarrow S$ (with $S$ having a unique singular quadruple point of type $\frac 14(1,1)$) with a commutative diagram
\begin{equation}
 \label{eq:compos}
  \xymatrix{
    Y \ar[d]_{f'}  \ar[rr]^{\pi'}  &  &  \ar[d]^{f} S  \\
    \mathbb F_1  \ar[rr]_{\pi } & & \PP^2
}
\end{equation}
where $f: S\longrightarrow \PP^2$ is a $\mathbb Z^2_2$--cover with branch curve given by three distinct lines passing through the point $p$.  In conclusion $Y\cong \mathbb F_4$ (this is confirmed by the fact that $K_Y^2=8$ as it follows from an application of \eqref {eq:K2}) and $S$ is isomorphic to a cone over a rational normal curve of degree 4. \end{example}

\begin{proposition}\label{prop:G'=0} Consider a $G$--cover $f: S\longrightarrow \PP^2$, with $G\cong \mathbb Z_2^2$,  $S$ normal and $G'=0$, presenting the invariant conic bundle structure. Then, up to  a Cremona transformation, $f: S\longrightarrow \PP^2$ coincides with the cover considered in Example \ref {ex:class}, i.e.,
the image of the invariant pencil is the pencil $\mathcal F$ of lines through a point $p$ of the plane and the branch curve consists of three distinct lines of $\mathcal F$. More precisely, with the  notation as in Section \ref {ssec:2}, $D_{10}, D_{01}, D_{11}$ are three distinct lines of $\mathcal F$. 

Conversely, a $G$--cover $f: S\longrightarrow \PP^2$, with $G\cong \mathbb Z_2^2$ with branch curve consisting of three distinct lines in a pencil, then $S$ is rational and it is as in Example \ref {ex:class}.
\end{proposition}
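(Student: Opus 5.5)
We work throughout with the normalization conventions of Section \ref{sss:norm}, so that the $D_g$ are reduced and have no common components. We also use the reduction already made at the start of Section \ref{sec:classinv}: the invariant pencil $|F|$ maps to the pencil $\mathcal F$ of lines through $p$.

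Since $G'=0$, the pull back of a general line $\ell\in\mathcal F$ consists of four distinct curves of $|F|$, permuted simply transitively by $G$. So $f$ is unramified over a general point of $\ell$, and no component of the branch curve $D$ meets the general line of $\mathcal F$ outside $p$. Every irreducible component of $D$ must therefore be a line of $\mathcal F$. (A point can occur as a component of the image of a ramification curve only if that curve is contracted. We handle this by working on the blow-up $\mathbb F_1$ of $\PP^2$ at $p$. There the possible branch curves are fibres of $\mu$ or the exceptional curve $E$. The curve $E$ is not in the branch locus of the cover pulled back to $\mathbb F_1$, since $E$ maps to a point of $\PP^2$ and $f$ is finite over a neighbourhood of the generic point of $E$.)

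It follows that the cover $S$, pulled back to $\mathbb F_1$, is the fibre product of $\mu:\mathbb F_1\to\PP^1$ with a $\mathbb Z_2^2$-cover $g:C\to\PP^1$ of the base of the pencil. This holds because the function field of $S$ is generated over $\mathbb C(\PP^2)$ by square roots of products of equations of lines through $p$, that is, of forms in the coordinate of $\PP^1$; equivalently, the building data are pulled back from $\PP^1$. Since the four curves over $\ell$ lie in $|F|$, the curve $C$ parameterizes the pencil $|F|$ and so is rational. The cover $g$ is totally ramified because $S$ is irreducible. By Riemann--Hurwitz for a $\mathbb Z_2^2$-cover of $\PP^1$ with branch data $D_{10},D_{01},D_{11}$ consisting of $n_{10},n_{01},n_{11}$ points, we get
$$2g(C)-2=-8+2(n_{10}+n_{01}+n_{11}),$$
so $g(C)=0$ forces $n_{10}+n_{01}+n_{11}=3$. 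The parity condition of Section \ref{ssec:2} says the $n_g$ have the same parity. Together with total ramification (at most one $D_g$ is zero), this forces $n_{10}=n_{01}=n_{11}=1$. Hence $D_{10},D_{01},D_{11}$ are three distinct lines of $\mathcal F$. By uniqueness of the cover given its building data (\cite[Prop. 2.1]{Par}, $\PP^2$ having no 2-torsion), $f$ coincides with the cover of Example \ref{ex:class}.

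For the converse, suppose the branch data are three distinct lines of a pencil. Pardini's uniqueness result again identifies the cover with the construction of Example \ref{ex:class}. There $Y$ is a conic bundle over the rational curve $C$ (genus $0$ by the computation above), so $Y$, and hence $S$, is rational. The invariant pencil is the pull back of $\mathcal F$, and $G$ acts on its base $C$ faithfully, so $G'=0$.

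The main obstacle is the first step: justifying rigorously that all branch components are lines of $\mathcal F$, i.e.\ that there is no hidden branching over $p$ and that the cover is genuinely pulled back from $\PP^1$. I would handle this on $\mathbb F_1$ by an inertia-group argument. Any branch component not a fibre would meet the general fibre, producing a ramification point on a general member of $|F|$, hence a nonzero element of $G$ fixing the pencil member pointwise or preserving it. That element would then lie in $G'=0$, a contradiction.
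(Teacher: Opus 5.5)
Your proof is correct, but the decisive step is not the paper's. Both arguments begin the same way: with $G'=0$ the general line of $\mathcal F$ carries no branch point off $p$, so each $D_g$ is a union of $n_g$ lines through $p$.

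The paper then uses the three intermediate double planes $S_{10},S_{01},S_{11}$ of \eqref{eq:rob}. Each is rational, and each is branched along $n_{01}+n_{11}$, resp.\ $n_{10}+n_{11}$, $n_{10}+n_{01}$, concurrent lines. Rationality of such a double plane forces the number of lines to be $2$, and the three resulting linear equations give $n_{10}=n_{01}=n_{11}=1$ directly, with no parity argument.

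You instead descend to the base of the pencil. You identify the pulled-back cover with $\mathbb F_1\times_{\PP^1}C$ and recognize $C$ as the base of $|F|$. Riemann--Hurwitz then gives the single equation $n_{10}+n_{01}+n_{11}=3$, which the parity condition resolves.

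The paper's route needs the rationality of the quotient surfaces and the birational structure of double planes branched along concurrent lines, but no global description of the cover. Yours needs the function-field/fibre-product identification. After that, everything happens on curves, the given rationality of the base of $|F|$ is used directly, and you obtain the model of Example \ref{ex:class} at the same time, so the converse is immediate. The two arguments are really dual: $S/\langle g\rangle$ is birational to $\mathbb F_1\times_{\PP^1}(C/\langle g\rangle)$, so the paper's three double planes correspond to the three quotients $C/\langle g\rangle\to\PP^1$.

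One side remark. Your parenthetical reason why $E$ is not in the branch locus (``$f$ is finite over a neighbourhood of the generic point of $E$'') is not valid. In Proposition~\ref{prop:G'2}, when $D_{11}$ has multiplicity $d-1$ at $p$, $E$ does survive in the branch data of the normalized pulled-back cover. What keeps $E$ out here is that $n_{10}+n_{11}$ and $n_{01}+n_{11}$ are even, and this is exactly what your function-field argument encodes. Since that argument is already present, you can drop the parenthetical. The worry about a point appearing as a branch component is moot anyway, because $f$ is finite.
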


\begin{proof} As we saw, in this case the general line in $\mathcal F$ contains no branch point off $p$, and this implies that the curves $D_{10}, D_{01}, D_{11}$, of degrees $d_{10}, d_{01}, d_{11}$ respectively, are either 0 (at most one of them can be 0) or have a point of multiplicity 
$d_{10}, d_{01}, d_{11}$ respectively in $p$, so they consist of distinct lines through the point $p$. 

Consider the three intermediate double covers in \eqref {eq:rob}. Each of the surfaces $S_{01}, S_{10}, S_{11}$ is rational and it is a double cover of $\PP^2$ branched along a set of lines passing through the point $p$. Precisely  $g_{10}: S_{10}\longrightarrow \PP^2$ is branched along the $d_{01}+d_{11}$ lines of $D_{01}+D_{11}$, and similarly for $g_{01}: S_{01}\longrightarrow \PP^2$ and $g_{11}: S_{11}\longrightarrow \PP^2$. By the rationality of $S_{01}, S_{10}, S_{11}$, we deduce that $d_{10}=d_{01}=d_{11}=1$, as wanted. 

The last part of the statement follows by the argument in Example \ref {ex:class}.\end{proof}

\begin{remark}\label{rem:quadr} Notice that, in both cases {\bf (P1.221)} and {\bf (P1.22.1)} in Theorem \ref {thm:class1}, $\mathbb Z_2^2$ acts on the smooth quadric $Q=\PP^1\times \PP^1$ and determines $\mathbb Z_2^2$--covers $f: Q\longrightarrow Q$ that are ramified on three lines of the same ruling. By projecting down $Q$ to a plane from a general point of $Q$, one gets the cover of Example \ref {ex:class}. 
\end{remark}

\subsubsection{The classification in the $G'=\mathbb Z_2$ case for $r=2$}

\begin{proposition}\label{prop:G'2} Consider a $G$--cover $f: S\longrightarrow \PP^2$, with $G\cong \mathbb Z_2^2$, with $S$ normal and $G'=\mathbb Z_2$, presenting the invariant conic bundle case. Then, up to  a Cremona transformation, the image of the invariant pencil is the pencil $\mathcal F$ of lines through a point $p$ of the plane and the branch curve consists of two distinct lines of $\mathcal F$ and of a curve of odd degree $d$ with a point of multiplicity $d-2$ at $p$. More precisely, with the  notation as in Section \ref {ssec:2}, $D_{10}, D_{01}$ are two distinct lines of $\mathcal F$ and $D_{11}$ is either a line not passing through $p$ or a curve of odd degree $d\geq 2$, neither containing $D_{10}$ nor  $D_{01}$,  with a point of multiplicity $d-2$ at $p$.  

Conversely, any $\mathbb Z_2^2$--cover $f: S\longrightarrow \PP^2$, with the above branch data and $G'=\mathbb Z_2$  is such that $S$ is rational and normal, presenting the invariant conic bundle structure case. 
\end{proposition}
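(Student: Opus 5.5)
We follow the pattern of the proof of Proposition \ref{prop:G'=0}: first use the conic bundle to pin down the shape of the branch curve, then use Theorem \ref{thm:cal} on the three intermediate double covers \eqref{eq:rob} to fix the degrees. Since $G'\cong\mathbb Z_2$, there is a unique nonzero element, call it $(1,1)$ after relabelling, acting trivially on the base of the pencil. By the discussion after Lemma \ref{lem:bl} it is the twisting De Jonqui\`eres involution. The quotient $G/G'\cong\mathbb Z_2$ acts on the base $\PP^1$ of $|F|$, which maps $2:1$ onto the pencil $\mathcal F$. This double cover of $\PP^1$ is branched at exactly two points, which correspond to two lines $\ell_1,\ell_2$ of $\mathcal F$. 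Over a general line $\ell$, the preimage consists of two curves of $|F|$, each mapping $2:1$ to $\ell$ with two branch points. These branch points off $p$ must belong to the $g$-component with $g\in G'$, i.e.\ to $D_{11}$.

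So $D_{11}$ meets a general line of $\mathcal F$ in exactly two points off $p$, and therefore has degree $d$ and multiplicity $d-2$ at $p$. In particular $d=1$ with $p\notin D_{11}$ is allowed. The components $D_{10}$ and $D_{01}$ meet a general line of $\mathcal F$ only at $p$, so they consist of lines through $p$. Their union must be exactly $\ell_1\cup\ell_2$, the lines over which the base double cover ramifies. A line of $\mathcal F$ where $g$ fixes the whole fibre lies in $D_g$, and elements not in $G'$ cannot have ramification transverse to the pencil. Hence the branch lines of the base cover appear in $D_{10}+D_{01}$, and conversely.

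Next I would use the parity condition of Section \ref{ssec:2}: $d_{10},d_{01},d_{11}$ all have the same parity. Apply Theorem \ref{thm:cal} to $g_{11}:S_{11}\to\PP^2$, whose branch curve is $D_{10}+D_{01}$, a set of $d_{10}+d_{01}$ lines through $p$. Rationality forces exactly two lines, since a double plane branched on $2m$ concurrent lines is a conic bundle over a hyperelliptic curve of genus $m-1$. Together with total ramification this gives $d_{10}=d_{01}=1$, so $d$ is odd. Then $g_{10}$ is branched on $D_{01}+D_{11}$, a curve of degree $d+1$ with a point of multiplicity $d-1$ at $p$; this is case (iv) of Theorem \ref{thm:cal}, or a reduced conic when $d=1$. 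The same holds for $g_{01}$, which is consistent. Normality (Section \ref{sss:norm}) forces $D_{11}$ not to contain $\ell_1$ or $\ell_2$. We do not need to assume $D_{11}$ irreducible.

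For the converse, I would construct the cover as in Example \ref{ex:class}. Blow up $p$ to get $\mathbb F_1\to\PP^1$, and take the double cover $\PP^1\to\PP^1$ branched at the two points corresponding to $\ell_1,\ell_2$. The fibre product is again a Hirzebruch-type conic bundle. Then take the double cover of it branched on the pullback of $D_{11}$, which is bisection-like of relative degree $2$. The result is a conic bundle over $\PP^1$, hence rational, and the invariant pencil is the pullback of $\mathcal F$. The main obstacle is the step identifying $D_{10}+D_{01}$ exactly with the two branch lines of the base cover, i.e.\ excluding extra lines through $p$ and ensuring nothing else, such as the exceptional curve over $p$, contributes. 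I expect to handle this through the rationality of $S_{11}$ as above, checking carefully what happens at $p$ after blowing up.
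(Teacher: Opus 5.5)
Your setup matches the paper's. You identify $(1,1)$ as the generator of $G'$, show that horizontal branch components must have inertia in $G'$, and cut $D_{10}+D_{01}$ down to two lines of $\mathcal F$ using rationality of $S_{11}$. Your converse, via a rational pencil of rational curves, is also the paper's. However, the forward direction has a genuine gap at $p$.

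You assert that both branch points of a general curve of $|F|$ over $\ell$ lie off $p$, and conclude that $D_{11}$ has multiplicity $d-2$ at $p$. Nothing forces this. One of the two branch points may lie over $p$: after blowing up $p$ and normalizing as in Section~\ref{sss:norm}, the exceptional curve survives as a component of the $(1,1)$--branch divisor. This happens exactly when $D_{11}$ has multiplicity $d-1$ at $p$.

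Your own case ``$d=1$, $p\notin D_{11}$'' is of this kind: that line meets a general line of $\mathcal F$ once off $p$, not twice. So your sentence is internally inconsistent. For $d\geq 3$ such data are genuinely not of the stated form, e.g.\ $D_{11}$ a cubic with a node at $p$.

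Since the proposition asserts the normal form only up to Cremona transformations, what is missing is the reduction the paper performs, following Calabri:
\begin{itemize}
\item Write $D_{11}=C+R_1+\cdots+R_{d-e}$, with $C$ of degree $e$ having multiplicity $e-1$ at $p$, and the $R_i$ lines through $p$.
\item Apply quadratic transformations based at $p$, a general point of $C$, and the point of $C$ infinitely near to $p$. These lower $e$ to $1$, giving $D_{11}=R+R_1+\cdots+R_{d-1}$ with $R$ a line not through $p$.
\item Apply quadratic transformations based at $p$, at $R\cap R_{d-1}$, and at the point infinitely near to $p$ along $R_{d-2}$. These remove the $R_i$ two at a time, which uses that $d$ is odd.
\end{itemize}
Your ``main obstacle'' paragraph tries to show that nothing over $p$ contributes. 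That is the wrong target: this case occurs and must be transformed, not excluded.

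Two smaller points. First, ``together with total ramification this gives $d_{10}=d_{01}=1$'' does not follow. The split $D_{10}=\ell_1+\ell_2$, $D_{01}=0$, with $d_{11}$ even, satisfies the parity condition. It is still totally ramified, since $(1,0)$ and $(1,1)$ generate $G$. The paper does not spell this step out either, but the reason you give is not valid; you need a genuine argument, or a further transformation, to reach one line in each of $D_{10}$ and $D_{01}$.

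Second, in your converse, when $D_{11}$ is a line not through $p$, its pullback to the fibre product is a section, not a bisection. The preimage of the exceptional curve over $p$ must therefore be added to the branch divisor of the last double cover.
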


\begin{proof} We can assume that the non--zero element of $G'$ is $(1,1)$. Consider  the double cover $f_{11}: S\longrightarrow S_{11}$. The image via $f_{11}$ of the invariant pencil, is a rational pencil $\mathcal F'$ of rational curves.
The pull--back via $f_{11}$ of a general curve in $\mathcal F'$ is a unique curve of the invariant pencil. 

Consider next the double cover $g_{11}: S_{11}\longrightarrow \PP^2$, that maps $\mathcal F'$ to $\mathcal F$. This induces a double cover $\mathcal F'\longrightarrow \mathcal F$, such that the pull back of a general line $\ell$ in $\mathcal F$ is the union of two rational curves that map isomorphically to $\ell$. This implies that $g_{11}: S_{11}\longrightarrow \PP^2$ is branched along two distinct lines passing through $p$, which implies that $D_{10}, D_{01}$ are two distinct lines of $\mathcal F$. Then $D_{11}$ is a curve of odd degree $d$ with a point of multiplicity $c$ in $p$. Since, as we saw, on the general line $\ell$ in $\mathcal F$ there are two branch points, one has $d-2\leq c\leq d-1$. Precisely, if $c=d-2$ then $p$ is not a branch point on the general line of $\mathcal F$, whereas if $c=d-1$ then $p$ is  a branch point on the general line of $\mathcal F$.

Let us prove that if $c=d-1$, then up to a Cremona transformation of the plane, we can suppose that $d=1$ and  $D_{11}$ does not contain $p$. I will imitate the proof in \cite [Prop. 8.2.3]{Cal} and therefore I will be brief. Indeed, assume $c=d-1$. Then I can write
$$
D_{11}= C+ R_1+\cdots + R_{d-e}
$$
with $C$ irreducible of degree $e$ with an ordinary point of multiplicity $e-1$ at $p$, and $R_1,\ldots, R_{d-e}$ distinct lines passing through $p$. 

First I prove by induction that, up to Cremona transformations  we may assume that $e=1$ (i.e., $C$ is a line not passing through $p$). In fact, if $d>1$, choose a general point $q\in C$ and an infinitely near point $r$ to $p$ a
on $C$. Then make the quadratic transformation based at $p,q,r$. This drops drops the degree of $C$ by 1 and adds one more line through $p$ to $R_1,\ldots, R_{d-e}$. Repeating this process, we can reduce to the case in which
$$
D_{11}= R+ R_1+\cdots + R_{d-1}
$$
with $R$ a line not passing through $p$ and $R_1,\ldots, R_{d-1}$ lines through $p$. 

Next I prove by induction that we can get rid of the lines $R_1,\ldots, R_{d-1}$. In fact, if $d>1$, choose the intersection point $q$ of $R$ with $R_{d-1}$ and the point $r$ infinitely near to $p$ along $R_{d-2}$. Then make the quadratic transformation based at $p,q,r$. This eliminates the lines $R_{d-2}, R_{d-1}$. Since $d$ is odd, after $\frac {d-1}2$ steps we get rid of the lines $R_1,\ldots, R_{d-1}$, as wanted. 

To finish our proof I have to prove the last part of the statement, i.e.,  that any $\mathbb Z_2^2$--cover $f: S\longrightarrow \PP^2$, with the above branch data and $G'=\mathbb Z_2$ is such that $S$ is rational. Certainly $S$ has a pencil $\mathcal G$ of rational curves, the pull--back via $f$ of the pencil of lines in $\mathcal F$. To finish the proof, I have to show that $\mathcal G$ is rational. Consider again  the double cover $f_{11}: S\longrightarrow S_{11}$. The image via $f_{11}$ of the invariant pencil, is a pencil $\mathcal F'$ of rational curves. The double cover $g_{11}: S_{11}\longrightarrow \PP^2$, that has branch curve $D_{10}+D_{01}$,
maps two--to--one the pencil $\mathcal F'$ to $\mathcal F$, and the ramification occurs precisely at the two points corresponding to $D_{10}$ and $D_{01}$. This implies that $\mathcal F'$ is rational. Moreover, since $G'=\mathbb Z_2$, the pull--back via $f_{11}$ of a general curve in $\mathcal F'$ is a unique curve of the invariant pencil. This proves that $\mathcal G$ is rational as well as $\mathcal F'$, and this ends our proof. 
\end{proof}

\begin{remark}\label{rem:ot} (i) Let me keep the notation of the proof of  Proposition \ref {prop:G'2}. I show another way of concluding 
the proof, under some generality hypotheses. Assume  in fact we are in a \emph{general situation}, i.e., that $D_{11}$ is irreducible, smooth off $p$, that it has an ordinary singularity at $p$ and that $D_{10}, D_{01}$ are not tangent to $D_{11}$ off $p$. As we saw in the proof of the last part of Proposition \ref {prop:G'2}, we know that $S$ has a pencil $\mathcal G$ of rational curve, so $S$ has Kodaira dimension $-\infty$. To finish the proof, the only thing to check is that $S$ is regular. 

Let us blow--up the plane at $p$, getting $\pi: \mathbb F_1\longrightarrow \PP^2$, with the $(-1)$--curve $E$ of $\mathbb F_1$ contracted at $p$. Consider  the Cartesian diagram \eqref {eq:compos}, with $f': Y\longrightarrow \mathbb F_1$ a $\mathbb Z_2^2$--cover with building data just pull--back via $\pi$ of the building data of $f: S\longrightarrow \PP^2$. Let us denote by $D'_{10}, D'_{01}, D'_{11}$ the strict transforms of $D_{10}, D_{01}, D_{11}$ via $\pi$. Then the branch data of $f': Y\longrightarrow \mathbb F_1$ are 
$$
\bar D_{10}=D'_{10}+E, \,\, \bar D_{01}=D'_{01}+E, \,\, \bar D_{11}=D'_{11}+(d-2)E.
$$
As a consequence $Y$ is not normal and we have to normalize it. Since $d$ is odd, $d-2$ is also odd. Then the normalization process, as described in Section \ref {sss:norm}, imposes to change the branch data into
\begin{equation}\label{eq:bd}
\bar D'_{10}=D'_{10}, \,\, \bar D'_{01}=D'_{01}, \,\, \bar D'_{11}=D'_{11}.
\end{equation}

In the above general situation, after normalizing, we in fact have a desingularization  $\bar S$ of $S$, that is the  $\mathbb Z_2^2$--cover of $\mathbb F_1$ with branch data given by \eqref {eq:bd}. Then I can apply Proposition \ref {prop:inv} to compute $\chi(\mathcal O_{\bar S})$. Let us denote by $|P|$ the ruling of $\mathbb F_1$. Then we have
$$
2L_{10}\sim 2L_{01}\sim(d+1)P+2E, \,\,  2L_{11}\sim 2P
$$
and $K_{\mathbb F_1}=-2E-3P$. So applying \eqref {eq:chi} we easily see that have $\chi(\mathcal O_{\bar S})=1$, as wanted. 

If one is not in the general situation as above, the argument, though a bit more involved, is similar but I do not dive into this here and leave it to the reader. \medskip 

(ii) I conjecture that a result stronger than Proposition \ref {prop:G'2} may hold, namely that, if $d\geq 3$, up to a Cremona transformation of the plane, one can assume that $D_{11}$ is irreducible, that has an ordinary point of multiplicity  $d-2$ at $p$ and at most one node besides the singularity at $p$. The argument for proving this should be similar to the one in \cite [Prop. 8.2.4]{Cal}. However I do not dwell on this here. \medskip

(iii) In the case $d=1, c=0$ of Proposition \ref {prop:G'2} the branch data of the $\mathbb Z_2^2$--cover $f: S\longrightarrow \PP^2$ are given by three distinct lines in the plane not passing through the same point. In this case $S$ is smooth and a simple application of \eqref {eq:K2} shows that $K^2_S=9$, so that $S=\PP^2$. Up to projective transformations, this is the $\mathbb Z_2^2$--cover given by the map
$$
(x_0:x_1:x_2)\in \PP^2\longrightarrow (x^2_0:x^2_1:x^2_2)\in \PP^2
$$
and this is easily seen to correspond to  case {\bf (0.22)} in Theorem \ref {thm:class1}. 

The other cases in Proposition \ref {prop:G'2} correspond to  case {\bf (C.2,21)} in Theorem \ref {thm:class1}. Going back to the proof of Proposition \ref {prop:G'4}, the pull back $\Delta$ to $S_{11}$ via the double cover $g: S_{11}\longrightarrow \PP^2$ of the curve $D_{11}$  (that, as I conjectured in (ii) above, should be irreducible) is the branch curve of the double cover $f_{11}: S\longrightarrow S_{11}$. On $S_{11}$ we have the rational pencil $\mathcal F'$ of rational curves, whose pull back on $S$ is the invariant pencil of rational curves, and precisely the pull--back via $f_{11}$ of a general curve in $\mathcal F'$ is a unique curve of the invariant pencil. This implies that the curves in $\mathcal F'$ intersect $\Delta$ each in two points, so that $\Delta$ is  hyperelliptic. This is the hyperelliptic curve fixed by $G'$ mentioned in case {\bf (C.2,21)} of Theorem \ref {thm:class1}.
\end{remark} 

\subsubsection{The classification in the $G'=G=\mathbb Z_2^2$ case for $r=2$}

\begin{proposition}\label{prop:G'4} Consider a $G$--cover $f: S\longrightarrow \PP^2$, with $G\cong \mathbb Z_2^2$, with $S$ normal and $G'=\mathbb Z_2^2$, presenting the invariant conic bundle  case. Then, up to  a Cremona transformation, the image of the invariant pencil is the pencil $\mathcal F$ of lines through a point $p$ of the plane and, with the above notation, $D_{10}, D_{01}, D_{11}$ are curves of degree $d_{10}, d_{01}, d_{11}$ respectively with the same parity, with a point of multiplicity $d_{10}-1, d_{01}-1, d_{11}-1$ at $p$ respectively, with no common components (and for only one of these curves the point $p$ can even be of multiplicity one more). 

Conversely, any $\mathbb Z_2^2$--cover $f: S\longrightarrow \PP^2$, with the above branch data and $G'=\mathbb Z_2^2$ is such that $S$ is rational and normal, presenting the invariant conic bundle structure case. 
\end{proposition}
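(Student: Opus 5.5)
We follow the pattern of Propositions \ref{prop:G'=0} and \ref{prop:G'2}. After a Cremona transformation the image of the invariant pencil $|F|$ is the pencil $\mathcal F$ of lines through $p$. Since $G'=G$, the preimage of a general line $\ell\in\mathcal F$ is a single curve of $|F|$. Thus $f$ restricted to this curve is a $\mathbb Z_2^2$-cover $\PP^1\to\ell$. By Riemann--Hurwitz, as recalled above, it has exactly three branch points on $\ell$, each with inertia group a distinct nonzero element of $G$ (the three nonzero elements must all occur, otherwise the restricted cover is not totally ramified or is disconnected). Hence, off $p$, each of $D_{10},D_{01},D_{11}$ meets a general $\ell$ in exactly one point, or in none if that branch point sits at $p$ on every $\ell$.

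\textbf{Degree and multiplicity count.} Write $m_g$ for the multiplicity of $D_g$ at $p$, so that $D_g\cdot\ell-m_g=d_g-m_g$ is the number of moving intersection points. We get $d_g-m_g\in\{0,1\}$, and the sum of the three values is $2$ or $3$, since at most one of the three branch points on $\ell$ can be the fixed point $p$, namely the tangent direction through $p$ on the blow-up. Thus $m_g=d_g-1$ for all $g$, except possibly one $g$ with $m_g=d_g$. In the latter case $D_g$ is a union of lines through $p$, and $p$ is a branch point on every $\ell$ after blowing up. This gives the parenthetical clause. The parity condition is the one from Section~\ref{ssec:2} ($2a_{10}=d_{10}+d_{11}$, etc.). Since $S$ is normal, Section~\ref{sss:norm} shows that the $D_g$ are reduced and share no components.

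\textbf{Converse.} Blow up $p$ to get $\mathbb F_1$ and pull back the building data, then normalize as in Remark~\ref{rem:ot}(i). Over a general fibre $P$ of $\mathbb F_1$ the cover is a $\mathbb Z_2^2$-cover of $\PP^1$ with three simple branch points having distinct inertia. This forces a connected rational curve, so $S$ is fibred over a curve $B$ with rational fibres. The curve $B$ is the base of the Stein factorization of the composite map $S\to\mathbb F_1\to\PP^1$. Because the general fibre of $S\to\PP^1$ is already connected, we have $B\cong\PP^1$. Hence $S$ is birationally ruled over $\PP^1$, so $S$ is rational. The invariant pencil is the preimage of $\mathcal F$, and $G$ acts trivially on its base, so $G'=G$. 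Normality holds by construction.

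\textbf{Main obstacle.} The delicate step is the converse, specifically justifying that the general fibre is connected and that the inertia groups at the three points are pairwise distinct. This requires checking what happens at $p$ after normalization: the exceptional curve $E$ enters the branch data with multiplicity $m_g$ in $\bar D_g$, and one must track the parities. Because the $d_g$ have the same parity, the leftover coefficients of $E$ after Step~1 of normalization are all $1$ or all $0$. Step~2 then moves $E$ into at most one $D_g$, matching the ``one more'' case. I would verify this parity bookkeeping explicitly, as in Remark~\ref{rem:ot}.
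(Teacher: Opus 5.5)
Your proof is correct, but at its key steps it takes a different route from the paper.

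\textbf{Forward direction.} The paper lists every pattern of multiplicities at $p$ compatible with three branch points on a general line of $\mathcal F$. These are $(a-1,b-1,c-1)$, $(a,b-1,c-2)$, $(a,b,c-3)$ if $p$ is not a branch point, and $(a,b-1,c-1)$, $(a,b,c-2)$ if it is. It excludes the unwanted patterns one at a time: blow up $p$, normalize, and check that the resulting coefficient of $E$ in the branch data contradicts the assumed behaviour at $p$. You replace this case analysis with a structural fact. A $\mathbb Z_2^2$-cover $\PP^1\to\PP^1$ has three branch points with pairwise distinct inertia. This gives $d_g-m_g\leq 1$ for every $g$ at once, and it explains why each $D_g$ contributes exactly one moving point.

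\textbf{Converse.} The paper computes $\chi(\mathcal O)=1$ via \eqref{eq:chi}, after an only sketched claim that the singular points of the branch curve do not affect $\chi$. It then combines regularity with the pencil of rational curves. Your argument uses a connected rational general fibre over $\PP^1$, hence rationality. It needs no invariants and no analysis of singularities, and it is in the spirit of what the paper does in Propositions \ref{prop:G'2} and \ref{prop:G'=0,3}.

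\textbf{Two details to fix.}

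First, distinctness of the inertia does not follow from total ramification or connectedness. The pattern $(g,g,h)$ with $h\neq g$ is totally ramified. What excludes it is the relation $g_1+g_2+g_3=0$ among the local monodromies on $\PP^1$; equivalently, \eqref{eq:prodred} on $\PP^1$ forces even degrees.

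Second, your parity bookkeeping at $E$ is misstated. If all $m_g=d_g-1$ with $d_g$ even, the coefficients after Step 1 are all $1$. Step 2 followed by Step 1 then removes $E$ altogether. The ``one more'' case instead yields coefficients $(1,0,0)$ or $(0,1,1)$, listing first the $g$ with $m_g=d_g$, and it leaves $E$ in that $D_g$.

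You can in fact skip the bookkeeping entirely. On a general fibre $P$ the moving branch points and their inertia are known. The same relation $g_1+g_2+g_3=0$ then tells you whether $P\cap E$ is a branch point, and with which inertia.
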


\begin{proof} Let us set for simplicity $(d_{10}, d_{01}, d_{11})=(a,b,c)$ and let $(\lambda, \mu, \nu)$ be the multiplicities of $D_{10}, D_{01}, D_{11}$ at $p$. As we know, the general line $\ell$ in $\mathcal F$ has three branch points on it. This means that:\\
\begin{inparaenum}[(a)]
\item either $\ell$ intersects the sum of the curves $D_{10}, D_{01}, D_{11}$ in three points off $p$, and $p$ is not a ramification point;\\
\item or $\ell$ intersects the sum of the curves $D_{10}, D_{01}, D_{11}$ in two points off $p$, and $p$ is  a ramification point.
\end{inparaenum}

Let us examine first case (a). In this case we may have (up to reordering $D_{10}, D_{01}, D_{11}$) the following cases:\\
\begin{inparaenum}
\item [(a1)] $(\lambda, \mu, \nu)=(a-1,b-1,c-1)$;\\
\item [(a2)] $(\lambda, \mu, \nu)=(a,b-1,c-2)$;\\
\item [(a3)] $(\lambda, \mu, \nu)=(a,b,c-3)$.
\end{inparaenum}

I claim that cases (a2) and (a3) cannot happen. Look at case (a2). Let us  blow--up the plane at $p$, getting $\pi: \mathbb F_1\longrightarrow \PP^2$, with the $(-1)$--curve $E$ of $\mathbb F_1$ contracted at $p$. Consider  the Cartesian diagram \eqref {eq:compos}, with $f': Y\longrightarrow \mathbb F_1$ a $\mathbb Z_2^2$--cover with building data just pull--back by $\pi$ of the building data of $f: S\longrightarrow \PP^2$. Let $D'_{10}, D'_{01}, D'_{11}$ be the strict transforms on $\mathbb F_1$ of $D_{10}, D_{01}, D_{11}$. The branch data of $f': Y\longrightarrow \mathbb F_1$ are
$$
\bar D_{10}= D'_{10}+aE , \,\, \bar D_{01}\sim D'_{01}+(b-1)E, \,\, \bar D_{11}=D'_{11}+(c-2)E.
$$
If $a,b,c$ are even, after normalizing $Y$ the branch data become
\begin{equation}\label{eq:loi}
\bar D_{10}= D'_{10} , \,\, \bar D_{01}\sim D'_{01}+E, \,\, \bar D_{11}=D'_{11}
\end{equation}
and this is impossible, because $E$ appears in the branch curve and therefore $p$ is a branch point on a general line in $\mathcal F$, a case I have excluded for the moment. If $a,b,c$ are odd, in the process of normalizing $Y$ the branch data first become
$$
\bar D_{10}= D'_{10}+E , \,\, \bar D_{01}\sim D'_{01}, \,\, \bar D_{11}=D'_{11}+E
$$
and then again become as in \eqref {eq:loi}, and this leads to a contradiction as well. 
The argument for the case (a3) is completely similar and can be left to the reader. 

The case (a1) instead, corresponds exactly to the situation described in the statement of the proposition. 

To complete the argument in this case, let us prove that any $\mathbb Z_2^2$--cover $f: S\longrightarrow \PP^2$ with building data as these, is such that $S$ is rational.

As usual, the pull--back to $S$ of the pencil $\mathcal F$ is a pencil of rational curves. So to prove the assertion, it suffices to prove that $S$ is regular. As usual, I blow--up the plane at $p$, getting $\pi: \mathbb F_1\longrightarrow \PP^2$, with the $(-1)$--curve $E$ of $\mathbb F_1$ contracted at $p$. Consider  the Cartesian diagram \eqref {eq:compos}, with $f': Y\longrightarrow \mathbb F_1$ a $\mathbb Z_2^2$--cover with building data  pull--back by $\pi$ of the building data of $f: S\longrightarrow \PP^2$. After normalizing $Y$ with the same notation as above, the branch data become
$$
\bar D_{10} =D'_{10}\sim aH-(a-1)E, \,\, \bar D_{01}= D'_{01}\sim bH-(b-1)E, \,\, \bar D_{11}=D'_{11}\sim cH-(c-1)E
$$
and accordingly 
$$
2L'_{10}\sim (a+c)H-(a+c-2)E, \,\, 2L'_{01}\sim (b+c)H-(b+c-2)E,\,\, 2L'_{11}\sim (a+b)H-(a+b-2)E.
$$
Now I notice that the branch curve $\bar D$ of $f': Y\longrightarrow \mathbb F_1$ may have at most double points or some triple point common to the three curves $D_{10}, D_{01}, D_{11}$. These points
 may correspond to singular points of $Y$. Then, either applying the results in \cite {Cat} or directly resolving the singularities, one easily sees that they do not drop the Euler characteristic of the structure sheaf of the resolution $Y'$ of $Y$.  Then a simple application of \eqref {eq:chi} shows that $\chi(\mathcal O_{Y'})=1$, proving the assertion. 

Let us now come to case (b). In this case we may have (up to reordering $D_{10}, D_{01}, D_{11}$) the following cases:\\
\begin{inparaenum}
\item [(b1)] $(\lambda, \mu, \nu)=(a,b-1,c-1)$;\\
\item [(b2)] $(\lambda, \mu, \nu)=(a,b,c-2)$.
\end{inparaenum}

I claim that  (b2) cannot happen. The proof is completely similar to the proof of the exclusion of cases (a2) and (a3)  and can be left to the reader. Case (b1) corresponds exactly to the situation described in the statement of the proposition and, with an argument I made already in case (a1), we see that this case can really happen and give rise to a rational $\mathbb Z_2^2$--cover of the plane. \end{proof}

\begin{remark}\label{rem:wer} The cases listed in the statement of Theorem \ref {prop:G'4}  correspond to the infinitely many cases of type {\bf (C.22)} in Theorem \ref {thm:class1}. Note that supposing the three curves $D_{10}, D_{01}, D_{11}$ to be irreducible, given a general point on each of it, its pull--back via $f$ consists of two points, so  there are three 
hyperelliptic curve of positive genera fixed by the action of the elements of $G$, as stated in Theorem \ref {thm:class1}, case {\bf (C.22)}. In case the three curves $D_{10}, D_{01}, D_{11}$ become reducible, the hyperelliptic curves accordingly degenerate. 
\end{remark}

\subsection{The case $r=3$} In this case $G\cong \mathbb Z_2^3$.

\subsubsection{The behaviour of $G'$ for $r=3$} We have a $\mathbb Z_2^3$--cover $f: S\longrightarrow \PP^2$ and there is (on the minimal resolution $S'$ of $S$) an invariant pencil $|F|$ by the $\mathbb Z_2^3$ actions that is mapped to the pencil $\mathcal F$ of lines through the point $p\in \PP^2$.

In the present situation, $G'$ (that is the group that acts on the general fibre of $|F|$), by the classification of finite abelian groups of $\PP^1$, can a priori either be $0$, or $\mathbb Z_2$ or $\mathbb Z_2^2$. However the former case cannot occur. In fact in that case $\pi(G)=G$ would be  isomorphic to $\mathbb Z_2^3$, and this is not possible, because $\pi(G)$ acts on the pencil $|F|$ that is a $\PP^1$. So there are only two possibilities:\\
\begin{inparaenum}
\item [(1)] $G'\cong \mathbb Z_2$ and $\pi(G)\cong \mathbb Z_2^2$;\\
\item [(2)] $G'\cong \mathbb Z_2^2$ and $\pi(G)\cong \mathbb Z_2$.
\end{inparaenum}

Looking at Theorem \ref {thm:class1}, we have the following cases to take care of:\\
\begin{inparaenum}[(i)]
\item cases in which $G'\cong \mathbb Z_2$, corresponding to {\bf (P1.2221)}, {\bf (P1.222)} and {\bf (C.2,22)} in Theorem \ref {thm:class1};\\
\item cases in which $G'\cong \mathbb Z_2^2$, corresponding to {\bf (P1s.222)} and {\bf (C.221)} in Theorem \ref {thm:class1}.
\end{inparaenum}

Let $\ell\in \mathcal F$ be a general line in the pencil. Accordingly we have the following possibilities:\\
\begin{inparaenum}[(i)]
\item the pull back of $\ell$ via $f$ consists of four distinct  curves in $|F|$ each mapping with degree 2 to $\ell$, in which case $G'\cong \mathbb Z_2$ and there are 2 ramification points on $\ell$, each to be counted with multiplicity 4;\\
\item the pull back of $\ell$ via $f$ consists of two distinct  curves in $|F|$ each mapping with degree 4 to $\ell$, in which case $G'\cong \mathbb Z_2^2$ and there are 3 ramification points on $\ell$, each to be counted with multiplicity 4. 
\end{inparaenum}

\subsubsection{The classification in the $G'=\mathbb Z_2$ case for $r=3$}

\begin{proposition}\label{prop:G'=0,3} Consider a $G$--cover $f: S\longrightarrow \PP^2$, with $G\cong \mathbb Z_2^3$, with $S$ normal and $G'\cong \mathbb Z_2$, presenting the invariant conic bundle case. Then, up to  a Cremona transformation, the image of the invariant pencil is the pencil $\mathcal F$ of lines through a point $p$ of the plane and the branch data of the cover are as follows:\\
\begin{inparaenum}[(i)]
\item  $D_{100}, D_{010},D_{001}$ are three distinct lines of $\mathcal F$;\\
\item   $D_{111}$ is a reduced curve of odd degree $d:=d_{111}\geq 1$, not containing any of the lines $D_{100}, D_{010},D_{001}$, and either $d=1$ and  $D_{111}$ does not contain the point $p$, or $d\geq 3$ and $D_{111}$ has a point of multiplicity $d-2$ at $p$;\\
\item $D_{110}, D_{101}, D_{011}$ are zero.
\end{inparaenum} 

Conversely, any $\mathbb Z_2^3$--cover $f: S\longrightarrow \PP^2$, with the above branch data and $G'=\mathbb Z_2$ is such that $S$ is rational and normal, presenting the invariant conic bundle case. \end{proposition}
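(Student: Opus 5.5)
I would reduce the $r=3$ case to the $r=2$ results already proved, through the intermediate quotients. Let $g_0$ be the non--zero element of $G'\cong\mathbb Z_2$; after a linear change of coordinates on $G$ I may assume $g_0=(1,1,1)$. Then $\pi(G)\cong G/\langle g_0\rangle\cong\mathbb Z_2^2$ acts faithfully on the base $\PP^1$ of the invariant pencil $|F|$. Consider the quotient $S_0=S/\langle g_0\rangle$ with the induced $\mathbb Z_2^2$--cover $S_0\to\PP^2$. The invariant pencil descends to a pencil $\mathcal F'$ of rational curves on $S_0$. The general member of $\mathcal F'$ is the image of a single fibre $F$, and the pull back of a general line $\ell\in\mathcal F$ splits into four distinct members of $\mathcal F'$. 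So $S_0\to\PP^2$ is a $\mathbb Z_2^2$--cover with trivial $G'$ presenting the invariant conic bundle structure, and $S_0$ is rational, being dominated by $S$.

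By Proposition \ref{prop:G'=0}, after a Cremona transformation the branch curve of $S_0\to\PP^2$ consists of three distinct lines of $\mathcal F$. In terms of the branch data of $f$, the branch curve of $S/\langle g_0\rangle\to\PP^2$ is $\sum_{g\neq 0,\,g\neq g_0}D_g$, with the labels pushed to $G/\langle g_0\rangle$. Hence $\sum_{g\neq 0,g_0}D_g$ consists of exactly three lines through $p$, one for each non--zero class of $G/\langle g_0\rangle$. Each class $\{g,g+g_0\}$ therefore contributes exactly one line, and total ramification forces the three chosen elements together with $g_0$ to generate $G$. I would then use the freedom of an automorphism of $G$ fixing $g_0$ (for instance $(x,y,z)\mapsto$ adding $g_0$ to a generator) to rename the elements so that these lines are $D_{100},D_{010},D_{001}$ and $D_{110},D_{101},D_{011}=0$. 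The step to check here is that this renaming is compatible with fixing $g_0=(1,1,1)$. The three elements carrying lines have to be linearly independent, since they span $G/\langle g_0\rangle$ modulo $g_0$. If their sum equals $g_0$, I would replace one of them by its $g_0$--translate to break this, and then use a suitable basis change. This produces the normal form.

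It remains to determine $D_{111}$. I would pass to the intermediate quotient $S/H$, where $H$ is a subgroup of order $2$ not containing $g_0$, chosen so that $S/H\to\PP^2$ is a $\mathbb Z_2^2$--cover with $G'\cong\mathbb Z_2$. Its branch data should be two of the lines, namely the images of $D_{100},D_{010}$, together with the image of $D_{111}$ (possibly also a third line, depending on $H$). Among the three choices of $H$, I would pick the one for which the third line is absorbed: the quotient by $\langle(0,0,1)\rangle$ sends $D_{001}$ to ramification of $S\to S/H$ and $D_{111}\mapsto(1,1)$. Proposition \ref{prop:G'2} then gives that the image of $D_{111}$ has odd degree $d$ with multiplicity $d-2$ at $p$, or is a line not through $p$, and that it contains neither of the two lines. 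Applying the same argument with the roles of the three lines permuted also excludes $D_{001}$ as a component. Reducedness of $D_{111}$ follows from normality of $S$ by the criterion in Section \ref{sss:norm}. The main obstacle I expect is that the Cremona transformations used in Proposition \ref{prop:G'2} to normalise the case of multiplicity $d-1$ must preserve the three lines of $\mathcal F$. I would handle this by noting that the quadratic maps used there are based at $p$ and at points of $D_{111}$, so they preserve $\mathcal F$ and send its lines to lines of $\mathcal F$.

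For the converse, given such branch data, $f$ factors as $S\to S/\langle(0,0,1)\rangle\to\PP^2$. The second map has rational total space by Proposition \ref{prop:G'2}, and it carries the rational pencil $\mathcal F'$. The first map is a double cover branched along the pull back of $D_{001}$, which meets the general member of $\mathcal F'$ in two points. I would follow the end of the proof of Proposition \ref{prop:G'2}. The general line of $\mathcal F$ pulls back to four curves, each a double cover of $\ell$ branched at its two points on $D_{111}$, hence rational. The base of this pencil is the $\mathbb Z_2^2$--cover of $\mathcal F\cong\PP^1$ branched at the three points given by the lines, hence rational. A conic bundle over $\PP^1$ is rational, so $S$ is rational, and $G'=\langle g_0\rangle$ by construction. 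Normality follows from Section \ref{sss:norm}, since the $D_g$ are reduced and have no common components.
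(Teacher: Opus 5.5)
Your route is the paper's. You pass to $S/G'$ and use Proposition \ref{prop:G'=0} to get three lines of $\mathcal F$. You then determine the remaining component from the $r=2$, $G'\cong\mathbb Z_2$ analysis; applying Proposition \ref{prop:G'2} to $S/\langle(0,0,1)\rangle$ is a harmless variant of the paper's direct count of branch points on $\ell$. For the converse you exhibit a pencil of rational curves over a rational base.

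\textbf{The relabeling step fails.} Let $h_1,h_2,h_3$ be the inertia elements of the three lines. Their images are the three non-zero classes of $G/\langle g_0\rangle$, so $h_1+h_2+h_3\in\{0,g_0\}$. They are independent exactly when the sum is $g_0$, and then $g_0=(1,1,1)$ in the basis $h_1,h_2,h_3$, so there is nothing to fix.
\begin{itemize}
\item Your assertion that they are always independent is false.
\item If $h_1+h_2+h_3=0$, no automorphism of $G$ can send them to $e_1,e_2,e_3$, because linear relations are preserved. Adding $g_0=e_1+e_2+e_3$ to one of the $e_i$ is not even an automorphism.
\item Replacing $h_1$ by $h_1+g_0$ is not a renaming: which element of the coset $\{h_1,h_1+g_0\}$ carries the line is part of the data of the cover.
\end{itemize}
This case does occur. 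Take Blanc's model \textbf{(P1.2221)}, form the quotient $(\PP^1\times\PP^1)/G\cong\PP^1\times\PP^1$, and project it to the plane from a general point. You get three lines through $p$ whose inertia elements are $(x,y)\mapsto(-x,y)$, $(x,y)\mapsto(x^{-1},y)$ and $(x,y)\mapsto(-x^{-1},y)$, which sum to zero. Here $D_{g_0}$ is a pair of lines through another point. In this situation $\deg D_{g_0}$ is even, since $2L_\chi\sim D_{g_0}$ for the character $\chi$ vanishing on $\langle h_1,h_2\rangle$. So (ii) fails for this model.

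\textbf{What is missing is one further Cremona transformation:} an elementary transformation of the pencil at a point $q\in D_{h_1}\cap D_{g_0}$ with $q\neq p$. For example, take the quadratic transformation centred at $p$, $q$ and a general point.
\begin{itemize}
\item It contracts $D_{h_1}$ and replaces it by the line through the new centre coming from $q$.
\item By Step 2 of the normalization, that line has inertia $h_1+g_0$, so the new triple sums to $g_0$.
\item $\deg D_{g_0}$ rises by one, and its multiplicity at the new centre equals its degree minus $2$.
\end{itemize}
Only after this do your relabeling, and the parity conclusion you draw from Proposition \ref{prop:G'2}, go through. The paper's proof passes over this point in silence: it simply calls the lines $D_{100},D_{010},D_{001}$ and the rest $D_{111}$. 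So you were right to single it out, but your justification does not work.

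\textbf{A smaller slip in the converse.} The branch locus of $S\to S/\langle(0,0,1)\rangle$ is the pull-back of the line $D_{001}\in\mathcal F$. It therefore consists of members of the pencil and does not meet the general member in two points. The conic-bundle argument you then give is correct and is the one in the paper.
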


\begin{proof}  Let us set $X=S/G'$ with the double cover $g: S\longrightarrow X$. The map $f: S\longrightarrow \PP^2$ factors through $g$ and a map $h: X\longrightarrow \PP^2$, that is a $\bar G$--cover, with $\bar G\cong \mathbb Z_2^2$ presenting the invariant conic bundle case, with $\bar G'=0$. Then by Proposition \ref {prop:G'=0}, up to a Cremona transformation we may assume that the branch curve of $h: S'\longrightarrow \PP^2$ consists of three distinct lines through the point $p$. These lines are also part of the branch curve of 
$f: S\longrightarrow \PP^2$. Since, as we saw, the general line $\ell$ through $p$ contains only two branch points, then the remaining part $D'$ of the branch curve is a curve of odd degree $d$ with multiplicity $d-2$ at $p$, or with multiplicity $d-1$ at $p$, in which case $p$ counts as a branch point on $\ell$. As in the proof of Proposition \ref {prop:G'2}, one sees that if $D'$ has multiplicity $d-1$ at $p$, up to a Cremona transformation of the plane, we may assume that $D'$ has degree 1 and does not pass through $p$.  From this analysis, the first part of the assertion follows. 

Suppose conversely that we have a $\mathbb Z_2^3$--cover $f: S\longrightarrow \PP^2$ with the branch data as in (i)--(iii) above. The proof is analogous to the end of the proof of Proposition \ref {prop:G'2}. The surface $S$ has a pencil $\mathcal G$ of rational curves, i.e., the pull--back via $f$ of the pencil of lines in $\mathcal F$. To finish the proof, we have to show that $\mathcal G$ is rational. 
Consider again  the double cover $g: S\longrightarrow X$. The image via $g$ of the invariant pencil, is a pencil $\mathcal F'$ of rational curves. The $\mathbb Z_2^2$--cover  $h: X\longrightarrow \PP^2$, that has branch curve $D_{100}+D_{010}+D_{001}$,
maps with degree 4 the pencil $\mathcal F'$ to $\mathcal F$, and the ramification occurs precisely at the three points corresponding to $D_{100}$, $D_{010}$ and $D_{001}$. This implies that $\mathcal F'$ is rational. Moreover, since $G'=\mathbb Z_2$, the pull--back via $g$ of a general curve in $\mathcal F'$ is a unique curve of the invariant pencil. This proves that $\mathcal G$ is rational as well as $\mathcal F'$, and this ends the proof.\end{proof}

\begin{remark}\label{rem:cases}  Cases {\bf (P1.222)} and {\bf (P1.2221)} appear in the statement of Proposition \ref {prop:G'=0,3} corresponding to the case $d=1$. Case {\bf (C.2,22)} corresponds to the case $d>3$. The hyperelliptic curves fixed by $G'$ corresponds to the $\mathbb Z_2^2$--cover $C$ of the curve $D_{111}$ that sits on $X$ and is the branch curve of the double cover $s: S\longrightarrow X$. Actually, if $D_{111}$ has genus $g'$ (if we assume that  $D_{111}$ has an ordinary singularity at $p$ and is elsewhere smooth, one has  $g'=d-2$, by Riemann--Hurwitz formula), $C$ has genus $g=4g'+3\equiv -1$ modulo 4 as predicted in Theorem \ref {thm:class1}, at the point {\bf (C.2,22)}. \end{remark}

\subsubsection{The classification in the $G'=\mathbb Z_2^2$ case for $r=3$}

\begin{proposition}\label{prop:G'=2,3} Consider a $G$--cover $f: S\longrightarrow \PP^2$, with $G\cong \mathbb Z_2^3$, with $S$ normal and $G'\cong \mathbb Z_2^2$, presenting the invariant conic bundle  case. Then, up to  a Cremona transformation, the image of the invariant pencil is the pencil $\mathcal F$ of lines through a point $p$ of the plane. The branch data of the cover are:\\
\begin{inparaenum}[(i)]
\item  $D_{100}, D_{010}, D_{110}$  curves of degrees $a,b,c$ (with the same parity) respectively, with a point of multiplicity $a-1, b-1, c-1$ respectively at $p$ (for at most one of these curves $p$ can be of multiplicity one more);\\
\item   $D_{001}$ that is the union of $2$ distinct lines through the point $p$ with no common component with $D_{100}, D_{010}, D_{110}$;\\
\item $D_{111}, D_{101}, D_{011}$  are zero.
\end{inparaenum} 

Conversely, given any $\mathbb Z_2^3$--cover $f: S\longrightarrow \PP^2$, with the above branch data and $G'=\mathbb Z_2^2$, then $S$ is rational and normal, presenting the invariant conic bundle  case. \end{proposition}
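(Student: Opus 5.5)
The plan is to mimic the proof of Proposition \ref{prop:G'=0,3}, with the roles of $G'$ and $\pi(G)$ exchanged, and to reduce to the $r=2$ results. After a Cremona transformation the image of the invariant pencil is the pencil $\mathcal F$ of lines through $p$. Since $\pi(G)\cong\mathbb Z_2$, I choose coordinates so that $G'=\langle (1,0,0),(0,1,0)\rangle$; then $(0,0,1)$ maps to the generator of $\pi(G)$.

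\textbf{Step 1: the quotient by $G'$.} Set $X=S/G'$. Then $f$ factors as $S\to X\to \PP^2$, and $h\colon X\to\PP^2$ is a double cover. The pencil $|F|$ descends to a pencil $\mathcal F'$ on $X$. A general line $\ell\in\mathcal F$ pulls back to two members of $\mathcal F'$, each mapping isomorphically onto $\ell$. Exactly as in the proof of Proposition \ref{prop:G'2}, $h$ is then branched along two distinct lines of $\mathcal F$. The branch curve of $h=S/G\to S/G'$ consists of the components $D_g$ with $g\notin G'$, i.e. $D_{001}+D_{101}+D_{011}+D_{111}$. Hence this sum is two distinct lines through $p$.

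\textbf{Step 2: the general fibre.} Over a general $\ell$ there are three ramification points, each of multiplicity $4$, coming from the three nonzero elements of $G'$. The elements $g\notin G'$ do not fix any component of the general fibre, since they move the fibres. So they contribute no branch points on a general $\ell$ other than through the two fixed lines. I then want to show that only one of $D_{001},D_{101},D_{011},D_{111}$ is nonzero. Composing with an automorphism of $G$ fixing $G'$, that nonzero one can be taken to be $D_{001}$.

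To do this, look at the intermediate covers $S/\langle g\rangle$ for $g\notin G'$ and use irreducibility and total ramification. I would argue this via Pardini's relations in Section \ref{ssec:bd3}, or more simply as follows. On the curve $F$ over $\ell$, the inertia of a point over one of the two lines must lie in the stabiliser of $F$, which is $G'$. Yet that inertia is generated by an element not in $G'$. The way out is that the two lines are fibres over the two fixed points of $\pi(G)$ on the base $\PP^1$. The element $(0,0,1)$ acts there. After changing the complement of $G'$, all such branch lines carry the same label $g$, because the stabiliser of the fibre over a fixed point is $\langle g\rangle\times G'$ with $g$ acting on the fibre compatibly. This is the delicate point, and I expect it to be the main obstacle. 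If the two lines had different labels $g\neq g'$ outside $G'$, then $g+g'\in G'$ would have to have a special fixed curve. I would aim for a contradiction with Lemma \ref{lem:bl}, or with normality via Step 2 of the normalisation.

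\textbf{Step 3: the $G'$-part.} With Step 2 in hand, the remaining components $D_{100},D_{010},D_{110}$ are determined by considering $Y=S/\langle(0,0,1)\rangle$. This gives a $\mathbb Z_2^2$-cover $Y\to\PP^2$ whose branch data are $D_{100},D_{010},D_{110}$, with $G'=\mathbb Z_2^2$ acting on the fibres, and whose general line meets the branch locus in three points. The local analysis of Proposition \ref{prop:G'4} then applies verbatim. It uses the multiplicity counts (a1)–(a3) and (b1)–(b2), and the exclusion arguments by blowing up $p$ and normalising. It gives degrees $a,b,c$ of equal parity (by Section \ref{ssec:2}) and multiplicities $a-1,b-1,c-1$ at $p$, with at most one curve having multiplicity one larger. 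Normality of $S$ forces the absence of common components with the two lines.

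\textbf{Step 4: the converse.} Given such data, $S$ is normal after the normalisation procedure. The pull-back of $\mathcal F$ gives a pencil of rational curves, since a general $\ell$ pulls back to a $\mathbb Z_2^2$-cover of $\PP^1$ branched at three points. To see that this pencil is rational, I argue as at the end of Proposition \ref{prop:G'=0,3}. $X\to\PP^2$ is branched on the two lines $D_{001}$, so $\mathcal F'$ is a double cover of $\mathcal F\cong\PP^1$ branched at two points, hence rational. A general member of $\mathcal F'$ has connected preimage in $S$ because $G'$ acts transitively on the fibre. So $S$ is ruled over $\PP^1$, hence rational, and the invariant conic bundle structure is the one just exhibited.
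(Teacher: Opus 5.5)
Your Step~1 is fine: quotienting by $G'$ correctly shows that the vertical part $D_{001}+D_{101}+D_{011}+D_{111}$ consists of exactly two lines $\ell_1,\ell_2$ through $p$. The gap is Step~2, and it cannot be closed the way you intend, because in a given plane model the claim is false.

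Let $g_i\notin G'$ be the element fixing pointwise the fibre over $\ell_i$. The only constraint is $g_1+g_2\in G'$. A choice of complement of $G'$ can make one of them equal to $(0,0,1)$, but not both. Concretely, take the following data, all others zero:
\begin{itemize}
\item $D_{100}$ a conic through $p$;
\item $D_{010},D_{110}$ lines not through $p$;
\item $D_{001}=\ell_1$ and $D_{101}=\ell_2$.
\end{itemize}
Pardini's relations for $r=3$ hold, since the $2L_i$ have degrees $4,2,2$, and $S$ is normal. The argument of your Step~4, with $X\to\mathbb P^2$ branched on $\ell_1+\ell_2$, shows that $S$ is rational, with invariant pencil the lift of $\mathcal F$ and $G'=\langle(1,0,0),(0,1,0)\rangle$. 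Yet two of the four vertical data are nonzero, and no automorphism of $G$ preserving $G'$ can merge two distinct labels. Moreover, the relations force $d_{100}\not\equiv d_{010}\equiv d_{110}\pmod 2$, against (i).

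The group \textbf{(P1s.222)} as listed in Theorem~\ref{thm:class1} behaves the same way. The fibres $x=1$ and $x=-1$ are fixed pointwise by $(x,y)\mapsto(x^{-1},y/x)$ and $(x,y)\mapsto(x^{-1},-y/x)$, which differ by $(x,y)\mapsto(x,-y)\in G'$. So no contradiction with Lemma~\ref{lem:bl} or with normality is available. Step~3 collapses with it: $S/\langle(0,0,1)\rangle$ has branch data $D_{100}+D_{101}$, $D_{010}+D_{011}$, $D_{110}+D_{111}$, not $D_{100},D_{010},D_{110}$.

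The missing idea is to spend the Cremona freedom in the statement on relabelling. If $g_1\neq g_2$, set $h=g_1+g_2$. Apply the quadratic transformation centred at $p$, at a point $x\in\ell_1\cap D_h$ off $p$ (in the generic situation), and at a general point $y\in\ell_2$. It preserves the pencil through $p$ and contracts $\ell_1,\ell_2$. The exceptional curves over $x$ and $y$ become lines through the new centre. By the normalisation rule of Section~\ref{sss:norm} they acquire the labels $g_1+h=g_2$ and $g_2$, while the exceptional curve over $p$ leaves the branch locus. Moreover $D_h$ keeps its degree and the other two horizontal curves gain one, which restores the common parity. Only after this does Step~3 apply.

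The paper's own proof does not supply this step either. It applies Proposition~\ref{prop:G'4} to $T=S/\langle k\rangle$ for a single $k\notin G'$ and derives $d=2$ from the double cover $S\to T$. However, lines labelled in $k+G'$ are silently absorbed into the three curves of $T$. In the example above the number of lines labelled $k$ is $1$ or $0$ depending on $k$, although $S$ is rational.
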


\begin{proof} Let $k\in \mathbb Z_2^3$ be an element not in $G'$. Let us set $T=S/\langle k\rangle$ with the double cover $g: S\longrightarrow T$. The map $f: S\longrightarrow \PP^2$ factors through $g$ and a map $h: T\longrightarrow \PP^2$, that is a $\bar G$--cover, with $\bar G\cong \mathbb Z_2^2$ presenting the invariant conic bundle  case, with $\bar G'\cong \bar G\cong \mathbb Z_2^2$. Note that the image on $T$ of the invariant pencil of rational curves $|F|$ on $S$ is a pencil $\mathcal F'$ of rational curves that maps via $h$ to the pencil of lines $\mathcal F$. Since $h: T\longrightarrow \PP^2$,  is a $\bar G$--cover, with $\bar G\cong \mathbb Z_2^2$ with $\bar G'\cong \bar G\cong \mathbb Z_2^2$, then the map of $\mathcal F'$ to $\mathcal F$ is birational, hence $\mathcal F'$ is also rational. 

By Proposition \ref {prop:G'4}, up to a Cremona transformation we may assume that the branch curve of $h: T\longrightarrow \PP^2$ consists of three distinct curves of degree $a,b,c$ respectively of the same parity, with a point of multiplicity $a-1,b-1,c-1$ at $p$ respectively (for at most one of these curves $p$ can be of multiplicity one more). These curves are also part of the branch curve of 
$f: S\longrightarrow \PP^2$. Since, as we saw, the general line $\ell$ through $p$ contains only three branch points, then the remaining part of the branch curve is a 
union of $d$ lines $\ell_1,\ldots, \ell_d$ through $p$.  I claim that $d=2$. Indeed, the union of the pull back of $\ell_1,\ldots, \ell_d$ to $T$ is the branch curve of the double cover $g: S\longrightarrow T$, that therefore consists of $d$ rational curves in a rational pencil. Since $S$ is rational, one immediately has $d=2$. 

Suppose conversely that we have a $\mathbb Z_2^3$--cover $f: S\longrightarrow \PP^2$ with the branch data as in (i)--(iii) above. The same argument I made above shows that $S$  has a rational pencil of rational curves, hence it is rational, proving the proposition. \end{proof}

\begin{remark}\label{rem:frot} The cases in Proposition \ref {prop:G'=2,3} correspond to cases {\bf (P1s.222)} and {\bf (C.221)} in Theorem \ref {thm:class1}.\medskip

(i) As for case  {\bf (P1s.222)}, it corresponds to the case in which $a=b=c=1$ and the three lines $D_{100}, D_{010}, D_{110}$ have a common point $q\neq p$. The branch curve $D$ is then the union of the three lines $D_{100}, D_{010}, D_{110}$  and the two lines $D_{001}$ passing through the point $q\neq p$. It is interesting to look more closely to this $\mathbb Z_2^3$--cover of $\PP^2$.  Let us blow--up the two points $p,q$, thus obtaining a morphism $\pi: P\longrightarrow \PP^2$, with two exceptional divisors $E, E'$ contracted to $p,q$ respectively. Let us denote by $H$ the strict transform on $P$ of a general line on $\PP^2$. By pulling back the $\mathbb Z_2^3$--cover of $\PP^2$ via $\pi$ and after normalizing, we have a $\mathbb Z_2^3$--cover $f': Y\longrightarrow P$ with branch data
$$
D'_{100}\sim D'_{010} \sim D'_{110}\sim H-E',\,\, D'_{001}\sim 2H-2E
$$
where, as usual, I denote by $D'_{100},D'_{010},D'_{110},D'_{001}$ the strict transform of $D_{100},D_{010},$ $D_{110},D_{001}$ to $P$. By applying \eqref {eq:K2}, we compute $K_Y^2=0$. 

Next look at the two pencils $|H-E|$ and $|H-E'|$. The pull back of the general member $N$ of $|H-E|$ is the union of two disjoint rational curves $A+A'$ that are two $\mathbb Z^2_2$--cover of $\PP^1$, each ramified at the three points in which $N$ intersects the three curves $D'_{100},D'_{010},D'_{110}$. As we saw, $A$ and $A'$ are linearly equivalent and belong to a rational pencil of rational curves $|A|$.  
Similarly, the pull back of the general member $M$ of $|H-E'|$ is the union of four disjoint rational curves $B_1+\cdots +B_4$ each being a double cover of $\PP^1$, each ramified at the two points in which $M$ intersects the three curves in $D'_{001}$. Again, $B_1,\ldots, B_4$ belong to the same rational pencil $|B|$ of rational curves. Now we  have $8=(2A)\cdot (4B)$ hence $A\cdot B=1$ and therefore we have a birational morphism $g:Y\longrightarrow \PP^1\times \PP^1$ such that the pull back of the two natural rulings of $\PP^1\times \PP^1$ are the pencils $|A|$ and $|B|$. 

Notice that that the pull back of $E'$ via $f'$ is the union of two disjoint rational curves $E'_2, E'_2$ both being  a $\mathbb Z^2_2$--cover of $\PP^1$ branched at the three points in which the curves $D'_{100},D'_{010},D'_{110}$ intersect $E'$. One has $(E'_2)^2= (E'_2)^2=-4$. Similarly the pull back of $E$ via $f'$ is the union of four disjoint rational curves $E_1,\dots, E_4$ each being  a double cover of $\PP^1$ branched at the two points in which the curve $D'_{001}$ intersects $E$. One has $(E_i)^2=-2$, for $1\leq i\leq 4$.

The unique curve $C\in |H-E-E'|$ is a $\PP^1$ and $C^2=-1$ and $f'$ has no ramification point on it, so the pull back of $C$ via $f'$ is the union of eight distinct copies of $\PP^1$, each with self--intersection $-1$.
 The pencil $|A|$ contains the two curves consisting of $E'_2, E'_2$ plus four of the $(-1)$--curves over $C$ each intersecting $E'_2, E'_2$ in one point. Similarly $|B|$ contains the four curves consisting of  $E_1,\dots, E_4$ plus two of the $(-1)$--curves over $C$ each intersecting $E_1,\dots, E_4$ in one point.

The morphism  $g:Y\longrightarrow \PP^1\times \PP^1$ is nothing but the contraction of the eight $(-1)$--curves over $C$.

I note also that in this case one can simplify the branch curve in the following way. Consider the intersection point $r$ of the line $D_{100}$ with one of the lines 
forming $D_{001}$ and make the quadratic transformation based at $p,q,r$. This quadratic transformation contracts to points the line $D_{100}$ and the line of $D_{001}$ passing through $r$. Then it blows up to lines the points $p,q,r$. However, the first two do not contribute to the branch curve whereas the third does. So after this quadratic transformation the branch curve consists of only four lines 
$D_{001}, D_{110}, D_{010}, D_{101}$. Then one computes the building data that are all linearly equivalent to $\mathcal O_{\PP^2}(1)$ except $L_{011}\cong \mathcal O_{\PP^2}(2)$. An application of \eqref {eq:chi} and \eqref {eq:K2} shows that for the corresponding $\mathbb Z_2^3$--cover $f: S\longrightarrow \PP^2$ one has $\chi(\mathcal O_S)$ and $K^2_S=8$. 

It is easy to check that in fact $S$ is a quadric. To see this, consider the morphism
$$
\psi: (x:y:z:t)\in \PP^3\longrightarrow (x^2:y^2:z^2:t^2)\in \PP^3
$$
that is a $\mathbb Z_2^3$--cover of $\PP^3$ branched over the coordinate planes. Consider the smooth quadric $S$ with equation $x^2+y^2+z^2+t^2=0$. The image of $S$ via $\psi$ is the plane with  equation $x+y+z+t=0$. Hence the restriction of $\psi$ to $S$ determines a $\mathbb Z_2^3$--cover $f: S\longrightarrow \PP^2$ that is branched along four general lines, as above. 
\medskip

(ii) As for the case {\bf (C.221)}, look at the three curves $D_{100}, D_{010}, D_{110}$ and suppose they are irreducible and in a general position, in particular they do not pass through a same point $q$ other than $p$. For each one of them, the counterimage via $f$ of the general point consist of four points. The branch points of these fourthuple covers occur at the pairwise intersection of these curves and at the intersection with the two lines in $D_{001}$, where the counterimage via $f$ of the points consist of only two points. 
Computing the genus $g$ of the cover of each one of the curves $D_{100}, D_{010}, D_{110}$, say of $D_{100}$, by Riemann--Hurwitz formula one finds
$$
8+2g-2=4+2(2a+b+c-2)
$$
so that $g=2a+b+c-3$, that is odd, because $a,b,c$ have the same parity. It is also not difficult to see that these fourthuple covers of $D_{100}, D_{010}, D_{110}$ are in fact hyperelliptic curves, as stated in  Theorem \ref {thm:class1}, case {\bf (C.221)}. 
\end{remark}

\subsection{The case $r=4$} In this case $G\cong \mathbb Z_2^4$.

\subsubsection{The behaviour of $G'$ for $r=4$} We have a $\mathbb Z_2^4$--cover $f: S\longrightarrow \PP^2$ and there is (on the minimal desingularization $S'$ of $S$) an invariant pencil $|F|$ by the $\mathbb Z_2^3$ actions that is mapped to the pencil $\mathcal F$ of lines through the point $p\in \PP^2$.

In the present situation, $G'$ (that is the group that acts on the general fibre of $|F|$), by the classification of finite abelian groups of $\PP^1$, can a priori either be $0$, or $\mathbb Z_2$ or $\mathbb Z_2^2$. However the former two cases cannot occur. In fact in that case $\pi(G)=G$ would be isomorphic to $\mathbb Z_2^s$, with $s\geq 3$, and this is not possible by the classification of finite abelian groups of $\PP^1$, because $\pi(G)$ acts on the pencil $|F|\cong \PP^1$. So there is only the possibility that $G'\cong \pi(G)\cong \mathbb Z_2^2$. 

Looking at Theorem \ref {thm:class1}, we have only two cases to take care of, namely  {\bf (P1.2222)} and {\bf (C.22,22)}. 

Let $\ell\in \mathcal F$ be a general line in the pencil. Then we have only the following possibility:  the pull back of $\ell$ via $f$ consists of four distinct  curves in $|F|$ each mapping with degree 4 to $\ell$ and there are 3 ramification points on $\ell$, each to be counted with multiplicity 8. 

\subsubsection{The classification in the case  $r=4$}

\begin{proposition}\label{prop:G'=2,4} Consider a $G$--cover $f: S\longrightarrow \PP^2$, with $G\cong \mathbb Z_2^4$, with $S$ normal, presenting the invariant conic bundle  case. Then, up to  a Cremona transformation, the image of the invariant pencil is the pencil $\mathcal F$ of lines through a point $p$ of the plane. The branch data of the cover are:\\
\begin{inparaenum}[(i)]
\item  $D_{1000}, D_{0100}, D_{1100}$ are curves of degrees $a,b,c$ (with the same parity) respectively, with a point of multiplicity $a-1, b-1, c-1$ respectively at $p$ (and only one of them can have multiplicity one more at $p$);\\
\item three more distinct lines $D_{0010}, D_{0001}, D_{0011}$ containing $p$, not contained in $D_{1000}, D_{0100}$, $D_{1100}$;\\
\item the remaining branch data are zero.
\end{inparaenum} 

Conversely, any $\mathbb Z_2^4$--cover $f: S\longrightarrow \PP^2$, with the above branch data is such that $S$ is rational and normal, presenting the invariant conic bundle case. \end{proposition}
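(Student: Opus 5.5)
The plan is to imitate the proof of Proposition \ref{prop:G'=2,3}, reducing to the $r=2$ case with $G'=G$ (Proposition \ref{prop:G'4}) and to the $r=2$ case with $G'=0$ (Proposition \ref{prop:G'=0}). As established above, $G'\cong\pi(G)\cong\mathbb Z_2^2$ and the sequence \eqref{eq:split} splits. After relabelling, we may take $G'=\langle (1,0,0,0),(0,1,0,0)\rangle$ and a complement $K=\langle (0,0,1,0),(0,0,0,1)\rangle$, with $\pi|_K$ an isomorphism onto $\pi(G)$.

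\textbf{Step 1: the quotient by $K$.} Set $T=S/K$, with $f$ factoring as $S\to T\to\PP^2$. Here $h\colon T\to \PP^2$ is a $G/K\cong\mathbb Z_2^2$--cover. The invariant pencil descends to a pencil $\mathcal F'$ on $T$. Since $K$ acts faithfully on the base of $|F|$, each curve of $\mathcal F'$ is birational to a fibre. The group $G/K$ acts trivially on the base of $\mathcal F'$, so $h$ is a cover with $\bar G'=\bar G\cong\mathbb Z_2^2$ in the invariant conic bundle case. Moreover $\mathcal F'\to\mathcal F$ is birational, so $\mathcal F'$ is rational and $T$ is rational. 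Proposition \ref{prop:G'4} then applies: up to a Cremona transformation, the branch curve of $h$ consists of curves of degrees $a,b,c$ with the same parity and multiplicities $a-1,b-1,c-1$ at $p$ (one of them possibly one more). These are the components $D_g$ of $D$ with $g\notin K$, grouped by their classes in $G/K$.

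\textbf{Step 2: the quotient by $G'$.} Dually, $X=S/G'$ is a $\mathbb Z_2^2$--cover of $\PP^2$ whose group acts faithfully on the pencil, so $\bar G'=0$. Proposition \ref{prop:G'=0} then forces its branch curve to be three distinct lines through $p$, one for each nonzero element of $G/G'$. These are the components $D_g$ with $g\notin G'$.

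\textbf{Step 3: combining the two descriptions.} Count branch points on a general line $\ell\in\mathcal F$. There are exactly 3 ramification points, each counted with multiplicity 8, and all of them are already accounted for by Step 1. Hence every component $D_g$ with $g\notin G'$ must be a line through $p$.

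Next, a component $D_g$ with $g\in G'$ maps into the branch of $h$, while it does not contribute to the branch of $S\to X$. The components with $g\notin G'\cup K$ would be lines through $p$ that contribute both to the branch of $T\to \PP^2$ and to that of $X\to\PP^2$. But the branch curve of $h$ described in Step 1 has no lines through $p$ as whole components forced, beyond possibly the extra-multiplicity case. So a careful comparison should show that these $D_g$ vanish, giving (iii). It is also what lets us identify $D_{1000},D_{0100},D_{1100}$ as in (i) and the three lines $D_{0010},D_{0001},D_{0011}$ as in (ii).

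Step 3 is the main obstacle. The difficulty is making rigorous that mixed elements $g\notin G'\cup K$ carry no branch, and that the Cremona transformation of Step 1 can be chosen compatibly with keeping the lines through $p$ as lines. To handle this I would first try the following: choose $K$ adapted so that $K\setminus\{0\}$ consists exactly of the labels of the three lines of Step 2. This is possible because the inertia elements of lines through $p$ must act nontrivially on the base. I would then use the fact that the pencil-preserving Cremona maps of Proposition \ref{prop:G'4} (de Jonquières maps centred at $p$) send lines through $p$ to lines through $p$.

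\textbf{Converse.} Given such data, $S$ carries the pencil pulled back from $\mathcal F$. It is the composite of the rational pencil $\mathcal F'$ on $T$ (rational by Proposition \ref{prop:G'4}) with the $\mathbb Z_2^2$--cover of $\PP^1$ branched at the three points corresponding to the lines $D_{0010},D_{0001},D_{0011}$, as in Example \ref{ex:class}. Hence the pencil is rational with rational fibres, and $S$ is rational. Normality follows from the reducedness of the $D_g$ and the absence of common components.
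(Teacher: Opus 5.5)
Your Step 1 is the paper's route: quotient by a splitting, apply Proposition \ref{prop:G'4}, and the leftover components are lines through $p$. Step 2 is a correct and cleaner way to see that the components with labels outside $G'$ are exactly three lines through $p$, one in each nonzero coset of $G'$. Your Cremona-compatibility worry is empty: once $\mathcal F$ is the pencil of lines through $p$, neither Proposition \ref{prop:G'=0} nor Proposition \ref{prop:G'4} moves the plane again.

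The gap is Step 3, and your proposed fix does not work. Let $g_1,g_2,g_3$ be the labels of the three lines. Acting nontrivially on the base only tells you that $\pi(g_1),\pi(g_2),\pi(g_3)$ are the three nonzero elements of $\pi(G)$. That gives $s:=g_1+g_2+g_3\in G'$, not $s=0$, so a complement $K\supset\{g_1,g_2,g_3\}$ need not exist. Nor does Proposition \ref{prop:G'4} exclude line components in the mixed cosets: a curve of degree $a$ with a point of multiplicity $a-1$ at $p$ can contain lines through $p$.

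The case $s\neq 0$ does occur. Take $D_{1000}$ a conic through $p$, $D_{0100},D_{1100}$ lines not through $p$, and $D_{0010},D_{0001},D_{1011}$ lines through $p$.
\begin{itemize}
\item The relations \eqref{eq:prodred} hold.
\item The exceptional curve over $p$ gets label $1000+0010+0001+1011=0$.
\item A general line through $p$ carries three branch points, labelled by $G'\setminus\{0\}$.
\item The base of the pencil is a $\mathbb Z_2^2$--cover of $\PP^1$ branched at three points, so $S$ is rational with the right $G'$.
\end{itemize}
Yet no complement of $G'$ contains the three line labels, and the degrees $2,1,1$ do not have the same parity. In general the parity relations read $a+c\equiv s_1$ and $b+c\equiv s_2$, where $s=s_1\cdot 1000+s_2\cdot 0100$. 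So on such a plane model (i)--(iii) fail as stated, and a further Cremona transformation is unavoidable.

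The missing idea is that de Jonqui\`eres maps centred at $p$ do \emph{not} preserve the labels of the special lines. Let $q\neq p$ be the point where $\ell_i$ meets $D_u$, with $u\in G'\setminus\{0\}$. Apply the quadratic transformation centred at $p$, $q$ and a general point. It does the following:
\begin{itemize}
\item It sends the pencil to the pencil of lines through a new centre.
\item It replaces $\ell_i$ by the exceptional line over $q$, whose label is $g_i+u$, and keeps the other two labels.
\item It turns each curve of degree $d$ with a point of multiplicity $d-1$ at $p$ into a curve of the same type. The degree of $D_u$ is unchanged and the other two rise by one, consistent with the parity relations.
\end{itemize}
One such move achieves $s=0$. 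Then $K=\{0,g_1,g_2,g_3\}$ is a complement, Step 1 with this $K$ gives (i) with equal parities, and (ii)--(iii) follow after an automorphism of $\mathbb Z_2^4$. In the example you obtain three conics through the new centre and three lines labelled $0011,0010,0001$.

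The paper's own count $d=3$, via the $\mathbb Z_2^2$--cover of $|F|$ onto $\mathcal F'$, tacitly makes the same assumption. That cover of bases is branched at the point corresponding to $\ell_i$ even when $g_i\notin\pi(G)$, and in that case $S\to T$ is not branched along the preimage of $\ell_i$. So this normalization must be added in either route. Your converse coincides with the paper's and is fine.
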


\begin{proof} The proof imitates the one of Proposition \ref {prop:G'=2,3}.

The sequence \eqref {eq:split} splits. So we have $\pi(G)\subset G$. Let us set $T=S/\pi(G)$ with the $\mathbb Z_2^2$--cover $g: S\longrightarrow T$. The map $f: S\longrightarrow \PP^2$ factors through $g$ and a map $h: T\longrightarrow \PP^2$, that is a $\bar G$--cover, with  $\bar G\cong \mathbb Z_2^2$ presenting the invariant conic bundle structure case, with $\bar G'\cong \bar G\cong \mathbb Z_2^2$. 

Then by Proposition \ref {prop:G'4}, up to a Cremona transformation we may assume that the branch curve of $h: T\longrightarrow \PP^2$ consists of three distinct curves of degree $a,b,c$ respectively of the same parity, with a point of multiplicity $a-1,b-1,c-1$ at $p$ respectively (and only one of them can have multiplicity one more at $p$). These curves are also part of the branch curve of  $f: S\longrightarrow \PP^2$. Since, as we saw, the general line $\ell$ through $p$ contains only three branch points, then the remaining part of the branch curve is a  union of $d$ lines $\ell_1,\ldots, \ell_d$  through $p$.  

The pull--back on $T$ of a general line $\ell$ in $\mathcal F$ is a unique rational curve that is a $\mathbb Z_2^2$--cover of $\ell$. So on $T$ there is a rational pencil $\mathcal F'$ of rational curves, i.e., the pull back of $\mathcal F$.  

The pull back of $\ell_1,\ldots, \ell_d$ are $d$ curves in the pencil $\mathcal F'$ that are  the branch curve of the $\mathbb Z_2^2$--cover $g: S\longrightarrow T$. The pull back via $f$ of a general line $\ell\in \mathcal F$ is the union of 4 rational curves (each belonging to the invariant pencil $|F|$), each being a $\mathbb Z_2^2$--cover of $\ell$. This implies that  the map $g: S\longrightarrow T$ induces a $\mathbb Z_2^2$--cover of the invariant pencil $|F|$ to $\mathcal F'$, that has therefore exactly 3 branch points. This implies that $d=3$ proving that the branch data are as in (i)--(iii). 

Suppose conversely that we have a $\mathbb Z_2^4$--cover $f: S\longrightarrow \PP^2$ with the branch data as in (i)--(iii) above. The same argument I made above shows that $S$  has a rational pencil of rational curves, hence it is rational, proving the proposition. \end{proof}

\begin{remark}\label{rem:klip} (i) As a sanity check, suppose we have a $\mathbb Z_2^4$--cover $f: S\longrightarrow \PP^2$ with the branch data as in (i)--(iii) of Proposition \ref {prop:G'4}. Then $S$ has a pencil of rational curves.  Let us check that $S$ is regular, hence rational. Suppose that we are in a \emph{general situation}, namely the curves $D_{1000}, D_{0100}, D_{1100}$ have ordinary singularities at $p$ and intersect transversely off $p$ (otherwise the proof becomes slightly more complicated but is conceptually the same). Let us blow--up the plane at $p$, getting $\pi: \mathbb F_1\longrightarrow \PP^2$, with the $(-1)$--curve $E$ of $\mathbb F_1$ contracted at $p$. Consider again the Cartesian diagram \eqref {eq:compos}, with $f': Y\longrightarrow \mathbb F_1$ a $\mathbb Z_2^4$--cover with building data just pull--back by $\pi$ of the building data of $f: S\longrightarrow \PP^2$. After normalizing $Y$ to a surface $S'$ the branch data become (with the usual notation)
$$
\begin{array}{c}
\bar D_{1000}\sim aH-(a-1)E,\,\, \bar D_{0100}\sim bH-(b-1)E,\,\, \bar D_{1100}\sim cH-(c-1)E,\\
\bar D_{0010}\sim \bar D_{0001}\sim \bar D_{0011}\sim H-E
\end{array}
$$
and all the other branch data are zero. At this point using \eqref 
{eq:data}, one computes the building data and checks that $a,b,c$ must have the same parity. Then, by applying  \eqref {eq:chi}, one sees  that $\chi(\mathcal O_{S'})=1$.\medskip

(ii) The cases in Proposition \ref {prop:G'=2,4} are related to cases
{\bf (P1.2222)} and  {\bf (C.22,22)} of Theorem \ref {thm:class1}. As for case {\bf (P1.2222)}, the situation is not so different from the one in the case {\bf (P1s.222)} described in Remark \ref {rem:frot}, (i). It corresponds to the case in which $a=b=c=1$ and the three lines $D_{1000}, D_{0100}, D_{1100}$ have a common point $q\neq p$. The branch curve $D$ is then the union of the three lines $D_{1000}, D_{0100}, D_{1100}$ passing through the point $q$ and the three lines $D_{0010}, D_{0001}, D_{0011}$ passing through the point $p$. 

Let us blow--up the two points $p,q$, thus obtaining a morphism $\pi: P\longrightarrow \PP^2$, with two exceptional divisors $E, E'$ contracted to $p,q$ respectively. Let us denote by $H$ the strict transform on $P$ of a general line on $\PP^2$. By pulling back the $\mathbb Z_2^4$--cover of $\PP^2$ via $\pi$ and after normalizing, we have a $\mathbb Z_2^4$--cover $f': Y\longrightarrow P$ with branch data
$$
D'_{1000}\sim D'_{0100} \sim D'_{1100}\sim H-E',\,\, D'_{0010}\sim D'_{0001} \sim D'_{0011}\sim H-E,
$$
(the notation is the usual one).  By applying \eqref {eq:K2}, we compute $K_Y^2=-8$. 

Look at the two pencils $|H-E|$ and $|H-E'|$. The pull back of the general member of $|H-E|$ [resp. of $|H-E'|$] is the union of four  disjoint rational curves belonging to rational pencil of rational curves $|A|$  [resp.  $|B|$] Now we  have $16=(4A)\cdot (4B)$ hence $A\cdot B=1$ and therefore we have a birational morphism $g:Y\longrightarrow \PP^1\times \PP^1$ such that the pull back of the two natural rulings of $\PP^1\times \PP^1$ are the pencils $|A|$ and $|B|$. 

The pull back of $E$ [resp. of $E'$] via $f'$ is the union of four disjoint rational curves each being  a $\mathbb Z^2_2$--cover of $\PP^1$ branched at the three points in which the curves $D'_{0010}, D'_{0001},  D'_{0011}$ [resp. the curves  $D'_{1000}, D'_{0100}, D'_{1100}$] intersect $E$ [resp. intersect $E'$]. The self--intersection of each such curve is $-4$. 

The unique curve $C\in |H-E-E'|$ is a $\PP^1$ and $C^2=-1$ and $f'$ has no ramification point on it, so the pull back of $C$ via $f'$ is the union of sixteen distinct copies of $\PP^1$, each with self--intersection $-1$. The pencil $|A|$ [resp. $|B|$] contains the four curves over $E'$ [resp. over $E$] each plus four of the $(-1)$--curves over $C$ each intersecting the curves over $E'$ [resp. over $E$] in one point. The morphism  $g:Y\longrightarrow \PP^1\times \PP^1$ is nothing but the contraction of the sixteen  $(-1)$--curves over $C$.\medskip

(iii) As in the case {\bf (C.22,22)}, again the situation is similar to the one in Remark \ref {rem:frot}, (ii).
Look at the three curves $D_{1000}, D_{0100}, D_{1100}$ and suppose they are irreducible and in a general position, in particular they do not pass through a same point $q$ other than $p$. For each one of them, the counterimage via $f$ of the general point consist of 8  points. The branch points of these 8--tuple covers occur at the pairwise intersection of these curves and at the intersections with the the lines in $D_{0010}, D_{0001}, D_{0011}$, where the counterimage via $f$ of the points consist of only 4 points. So each branch point counts with multiplicity 4.
Computing the genus $g$ of the cover of each one of the curves $D_{1000}, D_{0100}, D_{1100}$, say of $D_{1000}$, by Riemann--Hurwitz formula one finds
$$
16+2g-2=12+4(2a+b+c-2)
$$
so that $g=2(2a+b+c)-1$, that is congruent to 3 modulo 4. It is also not difficult to see that these 8--tuple covers of $D_{1000}, D_{0100}, D_{1100}$ are in fact hyperelliptic curves, as stated in  Theorem \ref {thm:class1}, case {\bf (C.221)}. 
\end{remark}

\section{The classification in the Del Pezzo case}\label{sec:classinva}

In this section I consider a $G$--cover $f: S\longrightarrow \PP^2$, with $G\cong \mathbb Z_2^r$, with $S$ rational and normal (and $r>1$), and  I will consider the Del Pezzo case. Again, as we saw in Theorem \ref {thm:class1}, we have $2\leq r\leq 4$. I will consider the minimal desingularization $\phi: S'\longrightarrow S$. I will assume that the pair $(S', \mathbb Z_2^r)$ is a minimal $\mathbb Z_2^r$--surface and that ${\rm Pic}(S')^{\mathbb Z_2^r}\cong \mathbb Z$ generated over $\mathbb Q$ by the class of $K_{S'}$ (see Proposition \ref {prop:bla}). I will keep this notation throughout this section.

\subsection{The classification in the $r=2$ case} In the Del Pezzo case with $r=2$, according to Theorem  \ref {thm:class2}, we have to take care of the cases  {\bf (2.G2)} and {\bf (1.B2.1)}. 

\subsubsection{The degree 2 Del Pezzo case} In the case {\bf (2.G2)}, the $\mathbb Z_2^2$--cover $f: S\longrightarrow \PP^2$ is such that $S'$ is a Del Pezzo surface of degree 2 that is a double cover $g: S'\longrightarrow \PP^2$ (via the Bertini involution $\sigma$), branched along a smooth quartic curve $\Gamma$ that is invariant via a non--trivial involution $\iota$ of the plane, and $\mathbb Z_2^2=\langle \sigma, \iota\rangle$. 

\begin{proposition}\label{prop:bert}  In the case {\bf (2.G2)}, the $\mathbb Z_2^2$--cover $f: S\longrightarrow \PP^2$ has, up to a Cremona transformation of the plane, the following branch data:\\
\begin{inparaenum}[(i)]
\item $D_{10}$ is a quartic curve with a tacnode at a point $x$;\\
\item $D_{01}$ is a smooth conic passing through $x$  and having at $x$ the same 
 tangent as the tacnodal tangent of $D_{10}$;\\
 \item $D_{11}=0$.
\end{inparaenum}

Conversely, any $\mathbb Z_2^2$--cover $f: S\longrightarrow \PP^2$ with the above branch data presents the Del Pezzo case {\bf (2.G2)}.
\end{proposition}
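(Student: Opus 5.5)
The plan is to make the cover explicit and push it down to the plane by an explicit birational map. By Theorem \ref{thm:class2}, case \textbf{(2.G2)}, we may assume that $S'$ is the double cover $g\colon S'\to\PP^2$ branched along $\Gamma\colon f_4(x,y)+f_2(x,y)z^2+z^4=0$, and that $G=\langle\sigma,\iota\rangle$ with $\iota(x:y:z)=(x:y:-z)$. The map $f$ then factors as $S'\stackrel{g}{\to}\PP^2\to\PP^2/\langle\iota\rangle$.

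\emph{Step 1: the quotient and its branch locus.} The quotient $\PP^2/\langle\iota\rangle$ is the weighted plane $\PP(1,1,2)$ with coordinates $(x:y:w)$, $w=z^2$, i.e. the quadric cone $Q\subset\PP^3$. The quotient map $\PP^2\to Q$ is branched along the conic section $\{w=0\}$, the image of the fixed line $z=0$, and at the vertex, the image of the isolated fixed point $(0:0:1)$. The image of $\Gamma$ is $\Gamma'\colon f_4(x,y)+f_2(x,y)w+w^2=0$, which is a quadric section of $Q$, hence a curve of degree $4$ on $Q$.

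\emph{Step 2: assign the branch data.} The fixed curve of $\sigma$ is the ramification curve over $\Gamma$, so its image $\Gamma'$ is $D_\sigma$. The fixed curve of $\iota$ lies over $z=0$, so $\{w=0\}$ is $D_\iota$. I expect $\sigma\iota$ to fix only finitely many points, the preimages of $(0:0:1)$, so $D_{\sigma\iota}=0$. Naming $\sigma\leftrightarrow(1,0)$ and $\iota\leftrightarrow(0,1)$ gives $D_{11}=0$.

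\emph{Step 3: pass from $Q$ to $\PP^2$.} I will project $Q$ from a point $q\in\Gamma'\cap\{w=0\}$; this is a birational map $Q\dasharrow\PP^2$ and hence, composed with $f$, changes the cover only by a Cremona transformation. Under this projection the ruling of $Q$ through $q$ is contracted to a point $x$, the vertex is blown up to a line, and $q$ is blown up to a line. I then check three things.
\begin{itemize}
\item The degree-$4$ curve $\Gamma'$ passing through $q$ becomes a plane quartic. Writing it in affine coordinates adapted to $q$, I expect a tacnode at $x$; this comes from the terms $w^2$ and $f_2w$ and the fact that $\Gamma'$ meets the contracted ruling with multiplicity $2$ away from the vertex.
\item The hyperplane section $\{w=0\}$ through $q$ becomes a smooth conic through $x$, tangent there to the tacnodal tangent.
\item The exceptional line over the vertex and the line over $q$ either appear with even multiplicity or occur in two $D_g$ at once. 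I will use the normalization rules of Section \ref{sss:norm} to show that they drop out of the branch data.
\end{itemize}
Since $\Gamma$ is smooth, the quartic is otherwise smooth, and (i), (ii), (iii) follow.

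\emph{Step 4: the converse.} Starting from branch data as in (i)--(iii), I will reverse the construction. The inverse projection sends them to $\Gamma'$ and $\{w=0\}$ on $Q$. The intermediate double cover $S_{01}$, branched along $D_{10}+D_{11}=D_{10}$, pulls back to the plane double cover of $Q$ branched along $\{w=0\}$ and the vertex. There $D_{10}$ becomes a smooth quartic invariant under $z\mapsto -z$, so $S'$ is a degree $2$ Del Pezzo surface with the group $\langle\sigma,\iota\rangle$. As a check, \eqref{eq:K2} should give $K^2=2$ after normalization. Uniqueness in Theorem \ref{thm:class2} then identifies the case as \textbf{(2.G2)}.

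\emph{Main obstacle.} The hardest part is Step 3: the local computation at $x$ and the bookkeeping of the normalization for the exceptional lines. I would carry this out first in the coordinates $(x,w)$ with $y=1$, and then apply one explicit quadratic transformation to put $D_{01}$ into the stated conic form.
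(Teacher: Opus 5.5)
\textbf{Gap in Step 3: the centre of projection.} Your overall route is the paper's: factor $f$ through the quadric cone $Q=\PP^2/\langle\iota\rangle$, then map $Q$ birationally to the plane. Step 3 fails, however, because you project from $q\in\Gamma'\cap\{w=0\}$. With that centre each of your three checks comes out wrong:
\begin{itemize}
\item $\Gamma'$ is a degree $4$ curve passing simply through the centre, so it goes to a \emph{cubic}, not a quartic. Its strict transform meets the contracted ruling through $q$ only once, so there is a single branch at $x$ and no tacnode.
\item The hyperplane section $\{w=0\}$ passes through the centre, so it goes to a \emph{line}, not a conic.
\item The exceptional line over $q$ does not drop out. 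After blowing up $q$ it appears once in $D_\sigma$ and once in $D_\iota$, so Step 2 of the normalization in Section \ref{sss:norm} moves it into $D_{\sigma\iota}$ instead of removing it. You get $D_{11}\neq 0$, a line tangent to the cubic at $x$.
\end{itemize}
This is exactly the model of Remark \ref{rem:sc}(ii), and it violates (i)--(iii). Also, the vertex is not blown up to a line: it lies on the ruling through $q$ and is sent to $x$. Your closing ``one explicit quadratic transformation'' cannot repair this as written, since all three components come out wrong, not only $D_{01}$.

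\textbf{The fix.} Project from a \emph{general} point $o\in Q$, as the paper does. Work on the minimal resolution $\mathbb F_2$ of $Q$, with negative section $E_\infty$, fibre $F$ and hyperplane class $E_\infty+2F$. Blow up $o$, then contract the strict transform of the fibre through $o$, then $E_\infty$. This identifies $E_\infty+2F$ with $2H-e_1-e_2$, where $e_2$ is infinitely near to $e_1$ over a point $x$. Then:
\begin{itemize}
\item $\Gamma'\sim 2(E_\infty+2F)$ misses $o$ and the vertex and meets the ruling through $o$ in two points, so it becomes a quartic of class $4H-2e_1-2e_2$, i.e.\ with a tacnode at $x$.
\item $\{w=0\}$ becomes a conic of class $2H-e_1-e_2$, with the tacnodal tangent at $x$.
\item $E_\infty$ lies in $D_\iota$ (it records the branching at the vertex) and is contracted, while the line over $o$ is not a branch curve. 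Hence $D_{11}=0$.
\end{itemize}

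\textbf{The converse, once corrected.} Reversing this map (conics through $x$ tangent to the tacnodal tangent map $\PP^2$ birationally onto $Q$) gives a converse genuinely different from the paper's. The paper resolves the cover over $x$, proves regularity via $h^1(\tilde{\mathbb F}_1,\tilde L_\chi^{-1})=0$, proves $P_2=0$ via $2K_T\sim -2(f''')^*E'$, and applies Castelnuovo's criterion. Your route identifies the action as \textbf{(2.G2)} directly. Both need the same genericity: the quartic smooth off $x$ and the conic meeting it transversally off $x$.

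\textbf{Minor slips.}
\begin{itemize}
\item Step 2: take $\iota$ to be the lift fixing the curve over $\{z=0\}$. Then $\iota$ also fixes the two points over $(0:0:1)$, which is why $E_\infty\subset D_\iota$, and $\sigma\iota$ instead fixes the four points over $\Gamma\cap\{z=0\}$. Your conclusion $D_{\sigma\iota}=0$ still holds.
\item Step 4: the double cover of $Q$ branched along $\{w=0\}$ and the vertex is $S_{10}$ (branched along $D_{01}+D_{11}$), not $S_{01}$.
\end{itemize}
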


\begin{proof} We start with a double cover $g: S'\longrightarrow \PP^2$ with $S'$ a Del Pezzo surface of degree 2, $g$ given by  the Bertini involution $\sigma$, branched along a smooth quartic curve $\Gamma$. Suppose  that $\Gamma$ is invariant by an involution $\iota$ of the plane. The involution $\iota$ has a fixed point $p\not\in \Gamma$ and a line of fixed points $\ell$ that does not contain $p$. Consider the quotient $Q=\PP^2/\langle \iota\rangle$ and the double cover $h: \PP^2\longrightarrow Q$. The surface $Q$ is isomorphic to a quadric cone in $\PP^3$ with vertex $q$  and the double cover $h: \PP^2\longrightarrow Q$ is branched at $q$ (corresponding to the point $p$) and along a smooth conic $\Lambda$ corresponding to the line $\ell$. Let $\Delta$ be the image of $\Gamma$ on $Q$. Note that $h: \Gamma \longrightarrow \Delta$ has  ramification points at the intersections of $\Gamma$ with $\ell$, hence $\Delta$ is a quartic curve with arithmetic genus 1, hence it is the complete intersection of $Q$ with a quadric not containing $q$. The composite map $h\circ g: S'\longrightarrow Q$ is a $\mathbb Z_2^2$--cover. To get a model of our original $\mathbb Z_2^2$--cover $f: S\longrightarrow \PP^2$, we need to birationally project $Q$ to the plane  from a general point of $Q$. In doing so we have the branch data as stated in (i)--(iii). 

Conversely, assume we have a $\mathbb Z_2^2$--cover $f: S\longrightarrow \PP^2$, with the branch data as stated in (i)--(iii) and let us prove that $S$ is rational and in fact it is related to  case {\bf (2.G2)}. We will assume that the quartic $D_{10}$ and the conic $D_{10}$ have intersection multiplicity exactly 4 at $x$ and intersect transversally in four points off $x$. If not the proof is analogous (and only slightly more involved) and can be left to the reader. 

The building data here are
$$
L_{10}\cong \mathcal O_{\PP^2}(2), \,\, L_{01}\cong \mathcal O_{\PP^2}(1),\,\, L_{11}\cong \mathcal O_{\PP^2}(3). 
$$

The surface $S$ is singular over the point $x$. We want to desingularize it. First I blow up $x$ and get $\pi: \mathbb F_1\longrightarrow \PP^2$, with the $(-1)$--curve $E$ of $\mathbb F_1$ contracted at $x$. Consider  the Cartesian diagram \eqref {eq:compos}, with $f': Y\longrightarrow \mathbb F_1$ a $\mathbb Z_2^2$--cover with building data the pull--backs via $\pi$ of the building data of $f: S\longrightarrow \PP^2$. Let $D'_{10}, D'_{01}$ be the strict transforms of $D_{10}, D_{01}$ on $\mathbb F_1$. The branch data of $f': Y\longrightarrow \mathbb F_1$ are 
$$\bar D_{10}=D'_{10}+2E, \bar D_{01}=D'_{01}+E, \bar D_{11}=0,$$
 so that $Y$ is not normal. Let us normalize it with the recipe described in Section \ref {sss:norm}. The normalization $Z$ determines a $\mathbb Z_2^2$--cover  $f'': Z\longrightarrow \mathbb F_1$ with branch data 
 $$\bar D'_{10}=D'_{10}, \bar D'_{01}=D'_{01}+E, \bar D'_{11}=0.$$
  The normalization $Z$ is still singular, since the total branch curve $\bar D'=\bar D'_{10}+\bar D'_{01}$ has a quadruple point at the intersection point $y$ of $E$ with $D'_{10}$ and $D'_{01}$ (because $D'_{10}$ has a node on $E$ and $D'_{01}$ passes simply through that node). 
At this point I could apply the results in \cite {Cat} to compute the invariants of the minimal resolution of $Z$ and prove that it is rational. However, to be self--contained, I proceed as above desingularizing $Z$. 

Let us  blow up $y$ getting a morphism $\pi': \tilde {\mathbb F}_1\longrightarrow \mathbb F_1$, with exceptional curve $E'$ contracted at $y$. The strict transform of $E$ is a $(-2)$--curve that I still denote by $E$. By pulling back the $\mathbb Z^2_2$--cover $f'': Z\longrightarrow \mathbb F_1$ and normalizing, we get a new $\mathbb Z^2_2$--cover $f''': T\longrightarrow \tilde {\mathbb F}_1$. Let us look at the branch data of $f''': T\longrightarrow \tilde {\mathbb F}_1$. These are $\tilde D_{10}$, the strict transform of $D'_{10}$ on $\tilde {\mathbb F}_1$, and $\tilde D_{01}$ that is the sum of the strict transform of $D'_{01}$ with $E$, whereas $\tilde D_{11}=0$. Since the total branch curve has only normal crossing singularities, $T$ is smooth.

Let us denote by $H$ the pull--back on $\tilde {\mathbb F}_1$ of a general line of $\PP^2$. Then we have
$$
\tilde D_{10}\cong 4H-2E-4E', \,\, \tilde D_{01}\cong 2H-2E'
$$
so that the  building data are 
$$
\tilde L_{10}\cong 2H-E-2E', \,\, \tilde L_{01}\cong H-E', \,\, \tilde L_{11}\cong 3H-E-3E'.
$$

Now I claim that 
$$
h^1(\tilde {\mathbb F}_1, \tilde L_{10}^{-1})=h^1(\tilde {\mathbb F}_1, \tilde L_{01}^{-1})=
h^1(\tilde {\mathbb F}_1, \tilde L_{11}^{-1})=0.
$$

The proof that $h^1(\tilde {\mathbb F}_1, \tilde L_{10}^{-1})=0$ is trivial because $\tilde L_{10}\cong 2H-E-2E'$ is clearly big and nef. As for $h^1(\tilde {\mathbb F}_1, \tilde L_{01}^{-1})$, this equals $h^1(\tilde {\mathbb F}_1, K_{\tilde {\mathbb F}_1}+\tilde L_{01})$. Now 
$$
K_{\tilde {\mathbb F}_1}+\tilde L_{01}\cong -3H+E+2E'+H-E'=-2H+E+E'
$$
and therefore
$$
h^1(\tilde {\mathbb F}_1, \tilde L_{01}^{-1})=h^1(\tilde {\mathbb F}_1, -2H+E+E')
$$ 
that is 0 because $2H-E-E'$ is big and nef. 

As for $h^1(\tilde {\mathbb F}_1, \tilde L_{11}^{-1})$, this equals $h^1(\tilde {\mathbb F}_1, K_{\tilde {\mathbb F}_1}+\tilde L_{11})$. Now 
$$
K_{\tilde {\mathbb F}_1}+\tilde L_{11}\cong -3H+E+2E'+3H-E-3E'=-E'
$$
and therefore
$$
h^1(\tilde {\mathbb F}_1, \tilde L_{01}^{-1})=h^1(\tilde {\mathbb F}_1, -E')=0.
$$ 
Indeed, consider the exact sequence 
$$
0\longrightarrow \mathcal O_{\tilde {\mathbb F}_1}(-E')\longrightarrow \mathcal O_{\tilde {\mathbb F}_1}\longrightarrow \mathcal O_{E'}\longrightarrow 0.
$$
The map $H^0(\tilde {\mathbb F}_1, \mathcal O_{\tilde {\mathbb F}_1})\longrightarrow H^0(E', \mathcal O_{E'})$ is surjective and $h^1(\tilde {\mathbb F}_1, \mathcal O_{\tilde {\mathbb F}_1})=0$, that implies that $h^1(\tilde {\mathbb F}_1, -E')=0$.

In conclusion, by taking into account \eqref {eq:reg}, we see that $T$ has irregularity 0. 

Next, by Hurwitz formula, we have that 
\begin{equation}\label{eq:sc}
2K_T\sim (f''')^*(2K_{\tilde {\mathbb F}_1}+\tilde D)\sim (f''')^*\Big (2(-3H+E+2E')+(6H-2E-6E')\Big)= -2 (f''')^*(E')
\end{equation}
whence clearly $h^0(T, 2K_T)=0$. Then by Castelnuovo's rationality criterion $T$ is rational, ending our proof. 
\end{proof}

\begin{remark}\label{rem:sc} (i) Let us keep the notation of the proof of Proposition \ref {prop:bert}.  By \eqref {eq:sc} or \eqref {eq:K2}, we see that $K_T^2=-4$. On the other hand we know that $T$ is a modification of a Del Pezzo surface $S'$ of degree 2. Precisely, $T$ is the blow--up of $S'$ at six points. In fact, remember that we started with a double cover $g: S'\longrightarrow \PP^2$ with $S'$ a Del Pezzo surface of degree 2, $g$ given by  the Bertini involution $\sigma$, branched along a smooth quartic curve $\Gamma$ that is invariant by an involution $\iota$ of the plane. As we know, $\iota$ has a fixed point $p$ off $\Gamma$. Then we 
considered the quotient $Q=\PP^2/\langle \iota\rangle$ that is isomorphic to a quadric cone in $\PP^3$ with vertex $q$ (the image of $p$) and there is the double cover $h: \PP^2\longrightarrow Q$. We have then the $\mathbb Z_2^2$--cover $h\circ g: S'\longrightarrow Q$. To get a model of the original $\mathbb Z_2^2$--cover $f: S\longrightarrow \PP^2$, we need to birationally project $Q$ to the plane  from a general point $q$ of $Q$. Now it is immediate to see that $T$ is the blow--up of $S'$ at the two points pull--back of $p$ via $g$ and at the four points pull--back of $q$  via $h\circ g$. On the whole, we find $K_{S'}^2=2$, as it should be.\smallskip

(ii) One can slightly simplify the branch curve in Proposition \ref {prop:bert} in the following two equivalent ways. One way is to project down the quadric cone $Q$ from an intersection point $t$ of $\Lambda$ with $\Delta$. Then one sees that the branch data become $D'_{10}$ a cubic curve, $D'_{01}$ a  line, $D'_{11}$ a line tangent to the cubic curve $D'_{10}$. Another way to see this is to make a quadratic transformation based at $x$, at the infinitely near point $y$ to $x$ along the conic $D_{01}$ and at an intersection point $z$ of $D_{10}$ and $D_{01}$ off $x$ and $y$.  The quartic $D_{10}$ maps to a cubic $D'_{10}$, the conic $D_{01}$ maps to a line $D'_{01}$, the point $z$ blows--up to a line $D'_{11}$ that is tangent to $D'_{10}$. 

If we consider the $\mathbb Z_2^2$--cover with these branch data, this is still singular over the tangency point of $D'_{11}$ with $D'_{10}$. After desingularizing (for this one has to blow up the plane at the tangency point of $D'_{11}$ with $D'_{10}$ and at the infinitely near point that $D'_{11}$ and $D'_{10}$ still have in common), one finds a rational surface $X$ such that $K_X^2=-1$. This is no contradiction with (i) above, because in this case $X$ is obtained by the degree 2 Del Pezzo surface $S'$ by blowing up only three points (not six as before), namely the two points of $S'$ over $p\in \PP^2$ and the only point of $S'$ over the point $t$. 

 \end{remark}

\subsubsection{The degree 1 Del Pezzo case} In the case  {\bf (1.B2.1)}, the $\mathbb Z_2^2$--cover $f: S\longrightarrow \PP^2$ is such that $S$ is a Del Pezzo surface of degree 1 that is a double cover $g: S\longrightarrow Q$ of a quadric cone $Q$ in $\mathbb P^3$ (via the Geiser involution $\tau$), branched at the vertex of the cone and along a smooth sextic  curve $\Gamma$ of genus 4 (the complete intersection of $Q$ with a cubic surface),  and $\Gamma$ is invariant via a non--trivial involution $\iota$ of $Q$, and $\mathbb Z_2^2=\langle \tau, \iota\rangle$. We have a double cover $h: Q\longrightarrow Q/\langle \iota \rangle\cong \PP^2$ and $f=h\circ g$. 

\begin{proposition}\label{prop:geiser}  In the case {\bf (1.B2.1)}, the $\mathbb Z_2^2$--cover $f: S\longrightarrow \PP^2$ has, up to a Cremona transformation of the plane, the following branch data:\\
\begin{inparaenum}[(i)]
\item $D_{10}$ and $D_{01}$ are two distinct lines intersecting at a point $p$ of the plane;\\
\item $D_{11}$ is a smooth cubic curve  that does not pass through the point $p$ and intersects transversely the lines $D_{10}$ and $D_{01}$.
\end{inparaenum}

Conversely, any $\mathbb Z_2^2$--cover $f: S\longrightarrow \PP^2$ with the above branch data presents  the Del Pezzo case {\bf (1.B2.1)}.
\end{proposition}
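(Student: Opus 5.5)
Following the proof of Proposition \ref{prop:bert}, I will first build a concrete model of $f=h\circ g$, then check the converse by desingularizing and applying Castelnuovo's criterion.

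\emph{Direct part.} Let $\iota$ be a non--trivial involution of the cone $Q\subset\PP^3$. It lifts to an involution of $\PP^3$ fixing the vertex $v$. Up to projectivities, I will take $Q=\{xy=z^2\}$ and $\iota\colon(x:y:z:w)\mapsto(x:y:z:-w)$, so that the pencil-type involution is the one whose quotient is a plane. Projection from $(0:0:0:1)$ gives the double cover $h\colon Q\to\PP^2$, $(x:y:z:w)\mapsto(x:y:z)$. Its branch locus is the conic $C=\{xy=z^2\}$ of $\PP^2$ together with the image $p$ of the vertex, which lies on that conic.

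This choice needs some care, and I expect it to be the first delicate point. One must list the involutions of the cone (via $Q\cong\PP(1,1,2)$) and decide which ones give a quotient $\cong\PP^2$. For example, $(x:y:z:w)\mapsto(y:x:z:w)$ or $z\mapsto -z$ fix a line of the ruling. I will choose $\iota$ compatibly with the statement, namely so that $Q/\langle\iota\rangle\cong\PP^2$ with branch a pair of lines $D_{10}+D_{01}$ through $p$.

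Concretely, use weighted coordinates $(s:t:w)$ of weights $(1,1,2)$ on $Q$ and $\iota(s:t:w)=(s:-t:w)$. The quotient map is $(s^2:t^2:w)$, which lands in $\PP(2,2,2)\cong\PP^2$. Its branch lines are $\{s=0\}$ and $\{t=0\}$, which meet at the point $p=(0:0:1)$, the image of the vertex.

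The curve $\Gamma=Q\cap\{\text{cubic}\}$ is $\iota$--invariant, so it is given by $F(s^2,t^2,w)=0$ with $F$ a cubic. Hence its image is a plane cubic $D_{11}$ with $\Gamma\to D_{11}$ of degree $2$. The Geiser cover $g$ is branched on $\Gamma$ and at the vertex. Composing, the branch data are as follows:
\begin{itemize}
\item $D_{10},D_{01}$ are the two lines (coming from the fixed curves of $\iota$ and of $\iota\tau$);
\item $D_{11}$ is the image of $\Gamma$ (fixed curve of $\tau$).
\end{itemize}
The group-theoretic labelling needs to be checked with the intermediate double covers of Section~\ref{ssec:inter}.

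Smoothness of $\Gamma$ and $v\notin\Gamma$ give the remaining conditions. First, $F(0,0,1)\neq0$, so $p\notin D_{11}$. Second, $D_{11}$ is smooth, since $\Gamma$ is an étale-away-from-lines double cover. Third, $D_{11}$ meets the lines transversally, since tangency would make $\Gamma$ singular. Finally, the degree condition $2a_{11}=d_{10}+d_{01}=2$ is consistent, since the three degrees $1,1,3$ all have the same parity.

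\emph{Converse.} Given such data, I will form the double cover of $\PP^2$ branched on $D_{10}+D_{01}$. This is a quadric cone $Q$ whose vertex lies over $p$. The pull-back of $D_{11}$ to $Q$ is a smooth complete intersection of $Q$ with a cubic, not through the vertex. Then $S\to Q$ is the double cover branched on this curve and on the vertex, which is exactly the degree $1$ Del Pezzo surface with its Geiser involution.

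To check this I will use the explicit Pardini data $L_{10}=\mathcal O(2)$, $L_{01}=\mathcal O(2)$, $L_{11}=\mathcal O(1)$, and verify the rationality and Del Pezzo property directly. I blow up $p$ and normalize as in Section~\ref{sss:norm}, which gives an $A_1$ singularity over $p$. Then formula \eqref{eq:K2} gives
\[
K^2=4\Bigl(-3+\tfrac12\cdot5\Bigr)^2H^2=1,
\]
and the Hurwitz formula shows that $-K_S$ is the pullback of $\mathcal O(1)$ twisted appropriately, hence ample. Therefore $S$ is a Del Pezzo surface of degree $1$ with $G=\langle\tau,\iota\rangle$, as in case {\bf (1.B2.1)}.

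The main obstacle is the correct identification of the involution $\iota$ with quotient $\PP^2$ and the matching of the labels $D_g$ with the group elements.
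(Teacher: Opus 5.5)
\textbf{Main gap: the converse.} Blowing up $p$ and normalizing does \emph{not} produce an $A_1$ singularity over $p$; $S$ is already smooth there.

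\begin{itemize}
\item Through $p$ pass only $D_{10}$ and $D_{01}$, meeting transversally. Their inertia elements $(1,0)$ and $(0,1)$ generate $\mathbb Z_2^2$. So by Proposition~\ref{prop:par}, $S$ is smooth over $p$ (locally the cover is $(u,v)\mapsto(u^2,v^2)$). The same holds over $D_{11}\cap(D_{10}\cup D_{01})$.
\item The $A_1$ point lives on the intermediate quotient $S_{11}=S/\langle(1,1)\rangle$, the cone, not on $S$.
\item Concretely: after the blow--up, $E$ lies in both $\bar D_{10}$ and $\bar D_{01}$, and Step 2 of Section~\ref{sss:norm} moves it into $\bar D_{11}$. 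The resulting cover is smooth, and the reduced preimage $R$ of $E$ is a smooth rational curve with $4R^2=((f')^*E)^2=-4$. So you have only blown up a smooth point of $S$.
\end{itemize}

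This is not cosmetic. \eqref{eq:K2} is stated for smooth $S$, and case \textbf{(1.B2.1)} requires a \emph{smooth} del Pezzo surface of degree $1$. As written, your last sentence contradicts your own $A_1$ claim. The missing ingredient is exactly the smoothness check with which the paper opens its converse.

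Likewise, ``$S\to Q$ is exactly the degree $1$ del Pezzo surface'' needs a reason. Two options:
\begin{itemize}
\item $\operatorname{Cl}(Q)\cong\mathbb Z$ has no $2$--torsion, so the normal double cover of $Q$ branched on $\Gamma$ is unique.
\item More directly, $\{v^2=F(s^2,t^2,w)\}\subset\PP(1,1,2,3)$ with the map $(s^2:t^2:w)$ has exactly the given branch data. Since $\operatorname{Pic}(\PP^2)$ has no $2$--torsion, it is $S$ by Pardini's uniqueness theorem.
\end{itemize}

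Once smoothness is in place, your converse is a little leaner than the paper's. The paper computes $\chi(\mathcal O_S)=1$ via \eqref{eq:chi} and gets $q=0$ from \cite{CCT} before applying Castelnuovo. On a smooth $S$, $-2K_S\sim f^*\mathcal O_{\PP^2}(1)$ is ample and $K_S^2=1$, which already makes $S$ a del Pezzo surface of degree $1$, hence rational. Moreover $f_{11}^*\mathcal O_Q(1)\cong -2K_S$, so $S\to Q$ is given by $|-2K_S|$ and $(1,1)$ is $\tau$.

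\textbf{The direct part.} It is fine once the first model is deleted, and it is more explicit than the paper's Riemann--Hurwitz count ($\Gamma$ has genus $4$ and $6$ branch points over the two fixed lines, so its image is a cubic of genus $1$).

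\begin{itemize}
\item In the first model, $(0:0:0:1)$ is the vertex of $\{xy=z^2\}$, so projecting from it maps $Q$ onto a conic, not onto $\PP^2$. Also $w\mapsto -w$, i.e.\ $(s:t:w)\mapsto(s:t:-w)$, has quotient $\PP(1,1,4)$ and preserves no smooth $\Gamma$ missing the vertex.
\item Your ``delicate point'' closes quickly. Up to conjugacy the involutions of $\PP(1,1,2)$ are $(s:t:-w)$, $(s:-t:w)$ and $(s:-t:-w)$. Write $\Gamma=\{P=0\}$ with $P=aw^3+w^2F_2+wF_4+F_6$ and $a\neq 0$. The third involution changes the sign of $aw^3$, so its lifts to $S=\{v^2=P\}\subset\PP(1,1,2,3)$ are $(s:t:w:v)\mapsto(s:-t:-w:\pm iv)$, which square to $\tau$. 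Hence $G\cong\mathbb Z_2^2$ forces your $\iota$; the paper simply assumes $Q/\langle\iota\rangle\cong\PP^2$.
\item The labelling is forced by parity: $d_{10},d_{01},d_{11}$ must have the same parity and the degrees are $1,1,3$.
\item ``\'Etale away from the lines'' gives smoothness of $D_{11}$ only off the lines. At points with $s=0$, use $\nabla P=(2sF_x,2tF_y,F_w)$ for $P=F(s^2,t^2,w)$, which gives smoothness and transversality at once.
\end{itemize}
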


\begin{proof} In the double cover $h: Q\longrightarrow Q/\langle \iota \rangle\cong \PP^2$ the branch curve consists of two distinct lines intersecting at a point $p$ of the plane. These two lines belong to the branch curve $D$ of the $\mathbb Z_2^2$--cover $f: S\longrightarrow \PP^2$. The only other component of the branch curve is the image on the plane of the curve $\Gamma$. The restriction of $h$ to $\Gamma$ is a double cover $h_{|\Gamma}: \Gamma \longrightarrow \Gamma'$, that has 6 branch points, i.e., the intersections of $\Gamma$ with the two lines that are the branch curve of $h$. This implies that $\Gamma'$ is a degree three curve of genus 1. Then we see that  the branch data verify (i)--(ii).

Conversely, suppose that  we have a  $\mathbb Z_2^2$--cover $f: S\longrightarrow \PP^2$ with the  branch data given by (i) and (ii). Then $S$ is smooth and we have
the building data
$$
L_{10}\cong L_{01}\cong \mathcal O_{\PP^2}(2), \,\, L_{11}\cong \mathcal O_{\PP^2}(1).
$$
and applying \eqref {eq:chi} and \eqref {eq:K2} we get $\chi(\mathcal O_S)=1$ and $K_S^2=1$. It remains to prove that $S$ is rational and actually a Del Pezzo surface. 

First of all $S$ is regular, by \cite [Prop. 2.13, (3)]{CCT}. Next we have
$$
2K_S\sim f^*(2K_{\PP^2}+D)\sim f^*(-6H+5H)=f^*(-H)
$$
that proves that the bigenus of $S$ is $0$ and the rationality of $S$ follows. Finally it is clear that $S$ is a Del Pezzo surface. 
\end{proof}

\subsection{The classification in the $r=3$ case} In the Del Pezzo case with $r=3$, according to Theorem  \ref {thm:class2}, we have to take care of the cases  {\bf (4.222)} and {\bf (2.G22)}. 

\subsubsection{The degree 4 Del Pezzo case} In the case  {\bf (4.222)}, the $\mathbb Z_2^3$--cover $f: S\longrightarrow \PP^2$ arises from  a Del Pezzo surface $S'$ of degree 4 given by equations \eqref {eq:dp4} and the group $G=\mathbb Z_2^3$ is given by \eqref {eq:GG}.

\begin{proposition}\label{prop:geiserol}  In the case {\bf (4.222)}, the $\mathbb Z_2^3$--cover $f: S\longrightarrow \PP^2$ has, up to a Cremona transformation of the plane, the following branch data:\\
\begin{inparaenum}[(i)]
\item three lines $D_{010}, D_{001}, D_{011}$ not passing through the same point;\\
\item a conic $D_{100}$ passing through the two intersections points of the line $D_{011}$ with the two other lines $D_{100}, D_{010}$ and not tangent to any of the three lines. 
\end{inparaenum}

Conversely, any $\mathbb Z_2^3$--cover $f: S\longrightarrow \PP^2$ with the above branch data is related to the Del Pezzo case {\bf (4.222)}.
\end{proposition}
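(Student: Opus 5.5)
We work directly with the explicit model. The degree 4 Del Pezzo surface $S'$ is given by \eqref{eq:dp4}, and $G$ acts as in \eqref{eq:GG} by sign changes on $x_1,x_2,x_3$. The first step is to compute the quotient map $S'\to S'/G$. The invariant ring is generated by $x_1^2,x_2^2,x_3^2,x_4,x_5$. The two equations \eqref{eq:dp4} are linear in $u_i=x_i^2$ for $i=1,2,3$, so we can solve for $u_1,u_2$ in terms of $u_3,x_4,x_5$. Hence the quotient is the weighted projective plane with coordinates $(u_3:x_4:x_5)$, of weights $(2,1,1)$. This is the quadric cone $Q\subset\PP^3$, embedded by $(x_4^2:x_4x_5:x_5^2:u_3)$. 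The map $f$ is then $S'\to Q$ followed by projection of $Q$ to a plane from a general point of $Q$, exactly as in the proof of Proposition \ref{prop:bert}.

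Next we read off the branch data on $Q$. The fixed curve of the involution changing the sign of $x_i$ is the hyperplane section $\{x_i=0\}$. Its image on $Q$ is:
\begin{itemize}
\item for $i=3$, the section $u_3=0$, a conic on $Q$ away from the vertex;
\item for $i=1,2$, the curves $u_1=0$ and $u_2=0$, each linear in $u_3,x_4^2,x_4x_5,x_5^2$, hence also hyperplane sections (conics) of $Q$.
\end{itemize}
So the branch locus on $Q$ consists of three conic sections $C_1,C_2,C_3$, one for each of $D_{100},D_{010},D_{001}$ after relabeling. We must also check whether the vertex is a branch point. The vertex is $x_4=x_5=0$, and there the equations force $x_1=x_2=x_3=0$ (for general $a,b,c$), so the vertex is not in the image. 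We should also check that no other element of $G$ (such as $x_1,x_2\mapsto -x_1,-x_2$) has a fixed curve. Only isolated fixed points occur for these, so $D_g=0$ for the remaining $g$.

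We then project from a general point $o\in Q$. Each plane section through $o$ becomes a line, and a section not through $o$ becomes a conic through the images of the two rulings through $o$. This is the step I expect to be the main obstacle: choosing the projection and Cremona normalization so that exactly the configuration of the statement appears. Projecting from a general point turns the three conics into conics through the two points $P_1,P_2$ coming from the lines of $Q$ through $o$.

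Instead, we project from a point $o\in C_2\cap C_3$, or equivalently we apply a quadratic transformation based at $P_1$, $P_2$ and a point of $C_2\cap C_3$. This turns $C_2$ and $C_3$ into lines. The exceptional line coming from the cone lines then becomes a third line with nonzero $D_g$; the parity relations of Section \ref{ssec:bd3} determine which $g$ is attached to it (here $011$). Meanwhile $C_1$ stays a conic passing through the two points where the new line meets the other two. Relabeling the basis of $G$ gives (i)--(ii). In carrying this out, we track multiplicities at the base points and normalize via Section \ref{sss:norm}.

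For the converse, we take the given branch data, compute the $L_i$ from the relations of Section \ref{ssec:bd3}, and resolve the singularities over the two special points by blowing them up and normalizing. We then apply Proposition \ref{prop:inv} to obtain $\chi=1$, and show $2K=$ pullback of an anti-effective divisor as in \eqref{eq:sc}, so that rationality follows by Castelnuovo's criterion. Finally, we identify the blow-down to a degree 4 Del Pezzo surface with $G$ acting by sign changes, invoking the uniqueness in Theorem \ref{thm:class2}.
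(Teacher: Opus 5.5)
Your set-up is correct: $S'/G$ is the quadric cone $Q\cong\PP(2,1,1)$, and its branch curves are the three plane sections $C_i$, the images of the elliptic curves $S'\cap\{x_i=0\}$, with elements $g_1,g_2,g_3$. However, the step you flagged fails, and two side claims feeding into it are wrong.

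First, over the vertex $x_4=x_5=0$ the equations become $cx_1^2=ax_3^2$, $cx_2^2=bx_3^2$. These have four solutions, namely the isolated fixed points of $g_1+g_2+g_3$. So on the resolution $\mathbb F_2$ of $Q$ (section $\sigma$, fibre $f$, hyperplane class $h=\sigma+2f$), the $(-2)$-section $\sigma$ is a branch curve with element $g_1+g_2+g_3$. Parity forces this, since $C_1\sim\sigma+2f$ is not divisible by $2$. Second, a cone has only one line through $o$, not two.

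Projecting from $o\in C_2\cap C_3$ is the map given by $|h-e|$ on $\mathrm{Bl}_o\mathbb F_2$. It contracts the chain $\sigma\cup(f-e)$ to a single point $P$. It sends $e$ to a line $\ell_e$ through $P$, which after normalization carries $g_2+g_3$. It sends $C_2,C_3$ (class $h-e$) to lines missing $P$. The total transform of $\ell_e$ is $e+2(f-e)+\sigma$, and $C_1\cdot e=C_1\cdot\sigma=0$. Hence $\bar C_1$ meets $\ell_e$ only at $P$: it is \emph{tangent} to $\ell_e$ at a point lying on neither $\bar C_2$ nor $\bar C_3$. It is not a conic through $\ell_e\cap\bar C_2$ and $\ell_e\cap\bar C_3$.

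This is not a slip that a further Cremona transformation can repair. The number of elements of $G$ fixing a curve of positive genus is invariant under $G$-equivariant birational maps. In \textbf{(4.222)} this number is three, and your tangency configuration indeed has three: $g_1,g_2,g_3$ over $\bar C_1,\bar C_2,\bar C_3$, each with elliptic preimage. For the data (i)--(ii) it is one. Blow up the triple points $\xi,\eta$; the new branch curves carry $101$ and $110$. Then $D_{010}$ meets the rest of the branch locus in three points with inertia $001$, $101$, $100$, which are pairwise distinct modulo $010$, so its preimage is rational. The same holds for $D_{001}$ and $D_{011}$, and only the conic has an elliptic preimage. Moreover, the lines through $\xi$ pull back to a $G$-invariant pencil of rational curves. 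So your route, carried out correctly, contradicts (i)--(ii) rather than proving it, and your converse step (recognizing the sign-change action via Theorem \ref{thm:class2}) would fail as well.

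The paper reaches (i)--(ii) differently. It passes to $S'/\langle k\rangle$, a smooth quadric, and places the two nodes of $\Gamma'$ at the points at infinity of the axes while letting the residual $\mathbb Z_2^2$ act as $(x,y)\mapsto(\pm x,\pm y)$. These two normalizations are incompatible. Take $k\colon x_3\mapsto -x_3$; the residual involution $x_1\mapsto -x_1$ fixes the smooth conic $\{x_1=0\}$ on that quadric, so it exchanges the rulings. Consequently the two nodes (images of the lines through a centre on $x_1=x_2=0$) form a pair $(\pm\lambda:1:0)$ swapped by the action. The image conic is then $A(X-\lambda^2Y)^2+Z(\cdots)=0$, tangent to $Z=0$, which is exactly your cone picture.
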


\begin{proof} Fix an involution $k\in \mathbb Z_2^3$ and consider the map $g: S'\longrightarrow S'/\langle k\rangle=Q\subset \PP^3$, where $Q$ is a smooth quadric in $\PP^3$. The branch curve of $g$ is easily seen to be a smooth genus 1 quartic curve $\Gamma$ cut out on $Q$ by another quadric. Fix a general point $p\in Q$ and project $Q$ to the plane from $p$. The map $g: S'\longrightarrow Q$ birationally transforms to a double cover $h: S\longrightarrow \PP^2$, with branch curve the image of $\Gamma$ in the projection, that is a quartic curve $\Gamma'$ with two distinct nodes $\xi,\eta$. Note that $S$  has two double points over $\xi$ and $\eta$ and its resolution is the blow--up of $S'$ at the two points in $g^{-1}(p)$. 

Next we consider the $\mathbb Z_2^2$--cover $r: \PP^2\longrightarrow \PP^2$
given by the action of  $\mathbb Z_2^2\cong \mathbb Z_2^3/\langle k\rangle$ on $\PP^2$
that has to globally fix $\Gamma'$. The cover $f: S\longrightarrow \PP^2$ is the composition of $h$ with $r$. 

In affine coordinates $(x,y)$ in the plane, the map $r$ is given by $(x,y)\mapsto (x^2,y^2)$ (see Remark \ref {rem:ot}). We may assume that the curve $\Gamma'$ has double points at the points at infinity of the $x$ and $y$ axes, and must be fixed by $r$, so it has to have an equation of the form
$$
ax^2y^2+bx^2+cy^2+d=0.
$$
with $a,b,c,d$ sufficiently general. The image of this curve via $r$ is the conic $\Delta$ with equation
$$
axy+bx+cy+d=0
$$  
that passes through the points at infinity of the $x$ and $y$ axes and is part of the branch curve of $f$. Moreover $r$ has the branch curve consisting of the union of the three lines given by the $x, y$ axes and the line at infinity. I denote these three lines $D_{010}, D_{001}, D_{011}$ respectively and the conic $\Delta$ by $D_{100}$ that verify (i)--(ii) of the statement of the proposition.

Conversely, suppose we have a $\mathbb Z_2^3$--cover $f: S\longrightarrow \PP^2$ with the branch data as in (i)--(ii). Then the branch curve $D=D_{010}+ D_{001}+ D_{011}+D_{100}$ has two triple points over the two points $\xi, \eta$ (at infinity of the $x$ and $y$ axes) where the conic $D_{100}$ intersects the line $D_{100}$ (at infinity). We need to resolve these singularities and to do so, we blow--up $\xi$ and $\eta$, thus getting the morphism $\pi: P\longrightarrow \PP^2$ with two exceptional divisors $E_1, E_2$ contracted to $\xi, \eta$ respectively. By pulling back the  $\mathbb Z_2^3$--cover $f: S\longrightarrow \PP^2$ via $\pi$ we have a $\mathbb Z_2^3$--cover $\phi: X\longrightarrow P$ that has as building data the pull--back on $P$ of the building data of $f: S\longrightarrow \PP^2$ via $\pi$.

Let $H$ be the pull--back to $P$ of a general line of $\PP^2$. Consider the strict transforms $D'_{010}, D'_{001}, D'_{011}, D'_{100}$ of $D_{010}, D_{001}, D_{011}, D_{100}$ on $P$. We have
$$
D'_{010}\sim H-E_1,\,\, D'_{001}\sim H-E_2,\,\, D'_{011}\sim H-E_1-E_2,\,\, D'_{100}\sim 2H-E_1-E_2.
$$
The branch data of $\phi: X\longrightarrow P$ are
$$
\bar D_{010}=D'_{010}+E_1, \,\, \bar D_{001}=D'_{001}+E_2, \,\, \bar D_{011}=D'_{011}+E_1+E_2, \,\, \bar D_{100}=D'_{100}+E_1+E_2.
$$
So  the branch curve of $\phi$ is not reduced and therefore $X$ is not normal. Let us normalize it with the recipe given in Section \ref {sss:norm}. If we do so, we get a new $\mathbb Z_2^3$--cover $\psi: Y\longrightarrow P$ with branch data
$$
\begin{array}{c}
 {\bar D}'_{010}=D'_{010}, \,\, {\bar D}'_{001}=D'_{001}, \,\, {\bar D}'_{011}=D'_{011},\\
 {\bar D}'_{100}=D'_{100},\,\, {\bar D}'_{110}=E_2,\,\, {\bar D}'_{101}=E_1, \,\, {\bar D}'_{111}=0,\\
\end{array}
$$
and $Y$ is now smooth. Accordingly the building data are
$$
\begin{array}{c}
L_{100} \sim L_{011} \sim H, \,\, L_{010} \sim H-E_1, \,\, L_{0001} \sim H-E_2,
\\
L_{110}\sim L_{101}\sim  L_{111}\sim 2H-E_1-E_2.
\end{array}
$$
Then we can compute using \eqref {eq:chi} and \eqref {eq:K2}, getting $\chi(\mathcal O_Y)=1$ and $K^2_Y=2$. 

Moreover, if $\bar D'$ is the branch curve, we have
$$
2K_Y\sim \psi^*(2K_P+\bar D')\sim \psi^*(-H)
$$
that proves that the bigenus of $Y$ is 0 and therefore also the geometric genus is 0 and  the irregularity is 0, so that $Y$ is rational. Moreover it is clear that $Y$ is a Del Pezzo surface. \end{proof}

\begin{remark}\label{rem:simpl} In the case of Proposition \ref {prop:geiserol} it is possible to simplify a bit the branch curve in the following way. The branch curve has two triple points $\xi, \eta$. Consider the intersection point $\zeta$ of the conic $D_{100}$ with the line $D_{001}$ and make a quadratic transformation based at $\xi, \eta, \zeta$. This transformation contracts to points the lines $D_{011}$ and $D_{001}$, and maps the conic $D_{100}$ to a line. Moreover it blows up the three points  $\xi, \eta, \zeta$ to three lines, that belong to the new branch curve. To be precise the new branch curve consists of five lines that are $D'_{100}$ (the image of the conic $D_{100}$), $D'_{010}$ (the image of the line $D_{010}$), $D'_{110}$ (the line corresponding to the blow--up of the point $\eta$), plus $D'_{101}$ consisting of the two lines corresponding to the blow--ups of the points 
$\xi$ and $\zeta$.

As a sanity check, I compute the building data, that are all isomorphic to $\mathcal O_{\PP^2}(1)$ except 
$$
L'_{100}\cong L'_{110}\cong L_{011}\cong \mathcal O_{\PP^2}(2)
$$
so that, using \eqref {eq:chi} and \eqref {eq:K2} we see that for the new $\mathbb Z_2^3$--cover $f': S'\longrightarrow \PP^2$ with this new branch curve $D'$ consisting of five lines, we have $\chi(\mathcal O_{S'})=1$ and $K_{S'}^2=2$ and 
$$
2K_{S'}\sim (f')^*(2K_{\PP^2}+\bar D')\sim  (f')^*(\mathcal O_{\PP^2}(-1)). 
$$
\end{remark}

\subsubsection{The degree 2 Del Pezzo case} In the case  {\bf (2.G22)}, the $\mathbb Z_2^3$--cover $f: S\longrightarrow \PP^2$ arises from a Del Pezzo surface $S'$ of degree 2. The group $\mathbb Z_2^3$ contains the Bertini involution $\sigma$.

\begin{proposition}\label{prop:geisero}  In the case {\bf (2.G22)}, the  $\mathbb Z_2^3$--cover $f: S\longrightarrow \PP^2$ has, up to a Cremona transformation of the plane, the following branch data:\\
\begin{inparaenum}[(i)]
\item three lines $D_{100}, D_{010}, D_{110}$ not passing through the same point;\\
\item an irreducible conic $D_{011}$ not passing through the intersection points of 
 the lines $D_{100}, D_{010}, D_{110}$ and not tangent to them. 
\end{inparaenum}

Conversely, any $\mathbb Z_2^3$--cover $f: S\longrightarrow \PP^2$ with the above branch data is in  the Del Pezzo case {\bf (2.G22)}.
\end{proposition}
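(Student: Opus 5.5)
I will follow the same pattern as in Propositions \ref{prop:bert} and \ref{prop:geiserol}: first read off the branch data from Blanc's explicit description of case {\bf (2.G22)}, then verify the converse by computing invariants. By Theorem \ref{thm:class2}, $S'$ is the double cover $g: S'\longrightarrow \PP^2$ given by the Bertini involution $\sigma$, branched along the smooth quartic $\Gamma$ with equation $f_2(x^2,y^2,z^2)=0$. The group is $\mathbb Z_2^3=\langle\sigma\rangle\times H$, where $H\cong\mathbb Z_2^2$ acts by sign changes of the coordinates. So $f$ factors as $r\circ g$, where $r:\PP^2\longrightarrow \PP^2$, $(x:y:z)\mapsto (x^2:y^2:z^2)$, is the $\mathbb Z_2^2$--cover of Remark \ref{rem:ot}(iii). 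The map $r$ is branched along the three coordinate lines, which are not concurrent. These lines give three of the branch components, corresponding to the three nonzero elements of $H$; after relabelling they are $D_{100}, D_{010}, D_{110}$. The remaining component is the image of $\Gamma$, namely the conic $\Delta=\{f_2(x,y,z)=0\}$, whose inertia element involves $\sigma$; after relabelling this is $D_{011}$. Smoothness of $\Gamma$ forces $\Delta$ to be smooth (hence irreducible), to avoid the three coordinate points, and to meet each coordinate line transversally. Indeed, a tangency of $\Delta$ with $\{x=0\}$ or a passage through a vertex would produce a singular point of $\Gamma=r^{-1}(\Delta)$. I will check this by a local computation in the affine chart $(x,y)\mapsto(x^2,y^2)$.

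For the converse, I take a $\mathbb Z_2^3$--cover with the stated branch data. The branch curve $D$ is reduced and has only normal crossings, because the conic avoids the vertices and is transverse to the lines. I still need to check, via Proposition \ref{prop:par}, that at each crossing the two inertia elements are independent; they are, since the elements $100,010,110,011$ are pairwise distinct. Hence $S$ is smooth. Next I solve \eqref{eq:prodred} for the line bundles. We have $2L_{100}\sim D_{100}+D_{110}$, $2L_{010}\sim D_{010}+D_{110}+D_{011}$ and $2L_{001}\sim D_{011}$, so $L_{100}, L_{010}, L_{001}$ have degrees $1,2,1$. The remaining $L_\chi$ then follow from the relations in Section \ref{ssec:bd3}. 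Plugging into \eqref{eq:chi} should give $\chi(\mathcal O_S)=1$. Formula \eqref{eq:K2} gives $K_S^2=8(-3+\tfrac 52)^2=2$. Hurwitz gives $2K_S\sim f^*(2K_{\PP^2}+D)\sim f^*(\mathcal O_{\PP^2}(-1))$, so $P_2(S)=0$, hence $p_g=0$. Then $q=0$ follows from $\chi=1$, and Castelnuovo's criterion shows $S$ is rational. Moreover $-K_S$ is ample, being half the pullback of an ample class, so $S$ is a Del Pezzo surface of degree $2$.

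It remains to identify the group action, i.e.\ to show we are in case {\bf (2.G22)} and not {\bf (4.222)}. I would quotient by the subgroup $H$ fixing the character structure, which recovers $r$ composed with the double cover of $\PP^2$ branched along $r^{-1}(D_{011})$. This is a smooth quartic of the form $f_2(x^2,y^2,z^2)=0$, and the corresponding involution is the Bertini involution of $S$. The main obstacle I expect is the bookkeeping of which group element plays the role of $\sigma$ versus elements of $H$: the labelling of the $D_g$ must be consistent with $r$ being the quotient by $\sigma$. I would settle this by checking that the lift of $r$ corresponds to the intermediate cover $S/\langle 001\rangle$, whose branch data are exactly the three lines. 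The computation of $\chi(\mathcal O_S)$ from the seven $L_\chi$ is routine.
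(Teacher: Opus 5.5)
Your argument is essentially the paper's. In the forward direction you factor $f=r\circ g$ and use smoothness of $\Gamma=r^{-1}(D_{011})$ to rule out the conic passing through a vertex or being tangent to a coordinate line. The paper does this with explicit coefficients; your local charts give the same argument. Your converse (normal crossings and distinct inertia elements giving smoothness, $\chi(\mathcal O_S)=1$, $K_S^2=2$, $2K_S\sim f^*(\mathcal O_{\PP^2}(-1))$, Castelnuovo) is the paper's verbatim. The step you left as routine also checks out: the $L_\chi$ have degrees $1,2,1,2,2,1,1$, so $\chi(\mathcal O_S)=8-7=1$.

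The one thing that is wrong is your extra identification step, which the paper does not carry out; it stops at ``rational Del Pezzo of degree $2$''.

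\begin{itemize}
\item $S/\langle 001\rangle$ does not realize $r$. Since $001$ is not an inertia element, all four components stay in the branch locus of $S/\langle 001\rangle\to\PP^2$. Its branch data are $D_{100}$, $D_{010}+D_{011}$ and $D_{110}$.
\item Quotienting by $H=\{0,100,010,110\}$ does not work either. Only the character $001$ vanishes on $H$, and $2L_{001}\sim D_{011}$, so $S/H$ is the double cover branched on the conic, i.e.\ a quadric.
\end{itemize}

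You seem to have conflated the character $001$ with the group element $001$. The correct subgroup is $\langle 011\rangle$, generated by the inertia element of the conic, consistent with your own forward labelling $\sigma\leftrightarrow 011$.

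\begin{itemize}
\item In $G/\langle 011\rangle$ the elements $100,010,110$ map to the three distinct nonzero classes and $011$ maps to $0$. Hence $S/\langle 011\rangle\to\PP^2$ is the $\mathbb Z_2^2$--cover branched on three non-concurrent lines, which is $r$ on $\PP^2$ by Remark \ref{rem:ot}(iii).
\item As a check, the characters vanishing on $011$ are $100,011,111$, and all three have $L_\chi\cong\mathcal O_{\PP^2}(1)$.
\item $S\to S/\langle 011\rangle$ is the double cover branched on the quartic $r^{-1}(D_{011})$, which has the form $f_2(x^2,y^2,z^2)=0$.
\end{itemize}

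So $011$ is the Bertini involution and $\Pic(S)^G$ has rank $1$. With $001$ replaced by $011$, your step places the cover in case \textbf{(2.G22)}, which makes explicit a point the paper leaves implicit.
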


\begin{proof} We start with the double cover $g: S'\longrightarrow \PP^2$ given by  the Bertini involution  branched along a smooth quartic curve $\Gamma$. Then $f$ is obtained by composing $g$ with the $\mathbb Z_2^2$--cover $h: \PP^2\longrightarrow \PP^2$ as in Remark \ref {rem:ot} that globally fixes the curve $\Gamma$. Here $\mathbb Z_2^2\cong \mathbb Z_2^3/\langle \sigma \rangle$. The branch curve of $h: \PP^2\longrightarrow \PP^2$ is given by three lines not in a pencil. The image of $\Gamma$ via $h$ is a conic $\Delta$ in general position with respect to the above three lines. These are the components of the branch curve of $f: S\longrightarrow \PP^2$ and one checks that (i) and (ii) are verified.

Precisely,  we need to check that $D_{011}$ does not  pass through the intersection points of 
 the lines $D_{100}, D_{010}, D_{110}$ and is not tangent to them. To see this, we can assume that the lines $D_{100}, D_{010}, D_{110}$ have respectively equation $x=0, y=0, z=0$, and that $D_{011}$ has equation $f_2(x,y,z)=0$. Recalling Theorem \ref {thm:class2}, case {\bf (2.G22)}, the equation of the smooth quartic $\Gamma$ is $F(x,y,z)=f_2(x^2,y^2,z^2)=0$. Now, suppose that $D_{011}$ passes through one of the intersection points of $D_{100}, D_{010}, D_{110}$, say through the point $(0:0:1)$. Then 
 $$
 f_2(x,y,z)=ax^2+by^2+cxy+dxz+eyz,
 $$
 hence 
 $$
 f_2(x^2,y^2,z^2)=ax^4+by^4+cx^2y^2+dx^2z^2+ey^2z^2,
 $$
 and it follows that $\Gamma$ is singular at $(0:0:1)$, a contradiction. Similarly, assume that one of lines $D_{100}, D_{010}, D_{110}$, say $x=0$, is tangent to $D_{011}$. Then the system
 $$
 f_2(x,y,z)=ax^2+by^2+cxy+dxz+eyz+fz^2=0,\,\,\, x=0
 $$
 has a solution with multiplicity 2. This is equivalent to say that $e^2-4bf=0$. Now look at the system
 $$
 \begin{array}{c}
 \frac {\partial F}{\partial x}=2x\frac {\partial f_2}{\partial x}=0
 \\
 \frac {\partial F}{\partial y}=2y\frac {\partial f_2}{\partial y}=2y(2by+cx+ez)=0
 \\
  \frac {\partial F}{\partial z}=2z\frac {\partial f_2}{\partial z}=2z(2fz+dx+ey)=0
  \end{array}
 $$
and, since $e^2-4bf=0$, this has a solution for $x=0$, meaning that $\Gamma$ is singular, a contradiction again. 

Conversely, if (i) and (ii) are verified, we have that $S$ is smooth because the branch curve $D$ has normal crossings. Moreover 
$$
2K_S\sim f^*(2K_{\PP^2}+D)\sim f^*(-H)
$$
so that  the bigenus of $S$ is 0 and $K^2_S=2$. Moreover, one easily computes the builiding data and using \eqref {eq:chi} one checks that $\chi(\mathcal O_S)=1$. So that $S$ is rational and it is in fact a Del Pezzo surface of degree 2.
\end{proof}

\subsection{The classification in the $r=4$ case} In the Del Pezzo case with $r=4$, according to Theorem  \ref {thm:class2}, we have to take care only of the case {\bf (4.2222)}. In the case  {\bf (4.2222)}, the $\mathbb Z_2^4$--cover $f: S\longrightarrow \PP^2$ arises from  a Del Pezzo surface $S'$ of degree 4 given by equations \eqref {eq:dp4} and the group $G=\mathbb Z_2^4$ is given by \eqref {eq:GGG}. 

\begin{proposition}\label{prop:r4} In the case {\bf (4.2222)}, the $\mathbb Z_2^4$--cover $f: S\longrightarrow \PP^2$ has, up to a Cremona transformation of the plane, branch data given by five lines $D_{1000},D_{0100}, D_{0010},$ $D_{0001}, D_{1111}$ in general position.

Conversely, any $\mathbb Z_2^4$--cover $f: S\longrightarrow \PP^2$ with the above branch data fall in the Del Pezzo case {\bf (4.2222)}.
\end{proposition}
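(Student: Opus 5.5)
The plan is to argue in both directions, using the explicit model of case \textbf{(4.2222)} in Theorem \ref{thm:class2} and the numerical tools of Proposition \ref{prop:inv}.

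\emph{Direct part.} Start from the degree 4 Del Pezzo surface $S'\subset\PP^4$ of \eqref{eq:dp4} and the group $G$ of \eqref{eq:GGG}, which acts diagonally by sign changes on $x_1,\dots,x_4$. The quotient map can be written explicitly. The invariant ring of $G$ acting on $\PP^4$ is generated by $x_1^2,\dots,x_4^2,x_5$, so $\PP^4/G$ is a weighted projective space. Restricted to $S'$, I will use the map
\[
(x_1:\dots:x_5)\mapsto (u_1:\dots:u_5)=(x_1^2:x_2^2:x_3^2:x_4^2:x_5^2),
\]
which is the quotient by $G\times\langle x_5\mapsto -x_5\rangle=G$ in $\PP^4$ (this group equals $G$ projectively). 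This sends $S'$ onto the plane cut out in $\PP^4$ by the two linear equations obtained from \eqref{eq:dp4} with $x_i^2$ replaced by $u_i$. The map $S'\to\PP^2$ has degree $16=|G|$, so it is the quotient map $f$. Its branch curve is the image of the fixed curves, namely the five hyperplane sections $x_i=0$, $i=1,\dots,5$. These map to the five lines $u_i=0$ in that plane. For general $(a,b,c)$ these lines are in general position: no three are concurrent, which I check from the coefficient matrix of the two linear equations.

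The inertia elements are read off directly. The stabilizer of $x_i=0$ for $i\le 4$ is the sign change of $x_i$, giving $D_{1000},\dots,D_{0001}$. For $x_5=0$ the stabilizer is the element changing all four signs, which projectively equals $x_5\mapsto -x_5$; this gives $D_{1111}$. So in this case no Cremona transformation is even needed.

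\emph{Converse.} Take five lines in general position with the stated labels. The branch divisor then has normal crossings and each $D_g$ is smooth, so $S$ is smooth by Proposition \ref{prop:par}. From \eqref{eq:prodred} I get $2L_i\sim D_{e_i}+D_{1111}$, so $L_i\cong\mathcal O(1)$. More generally, $2L_\chi$ is the sum of the lines $D_g$ with $\chi(g)\neq0$, so $L_\chi\cong\mathcal O(1)$ or $\mathcal O(2)$ according to the parity of the weight of $\chi$. Applying \eqref{eq:K2} gives $K_S^2=16(-3+5/2)^2=4$, and \eqref{eq:chi} gives $\chi(\mathcal O_S)=1$; this is a routine count over the 15 characters. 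Hurwitz gives $2K_S\sim f^*(2K_{\PP^2}+D)=f^*(-H)$, so $-K_S$ is ample. Hence the bigenus vanishes, and Castelnuovo's criterion shows $S$ is rational.

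To identify $S$ with $S'$, I run the first construction backwards. Any five lines in general position can be realized as the coordinate hyperplanes $u_i=0$ restricted to a plane in $\PP^4$; general linear forms give an arbitrary such configuration up to projectivity. The square-root cover of that plane is then exactly the intersection of two diagonal quadrics, which can be brought to the form \eqref{eq:dp4}. Since $\mathbb Z_2^4$-covers of $\PP^2$ are determined by their branch data (Pardini, recalled above), the two covers agree, and $G$ is the group \eqref{eq:GGG}.

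I expect the main obstacle to be this identification step. One must show that every configuration of five general lines arises from some $(a,b,c)$, and that the induced action is the one in \eqref{eq:GGG} and not some other $\mathbb Z_2^4$-action. For the first point I would compare moduli: configurations of five general lines in $\PP^2$ up to projectivity are 2-dimensional, matching the parameters of \eqref{eq:dp4} up to scaling. For the second, I would use the uniqueness in Blanc's list, Theorem \ref{thm:class2}: since $\mathrm{Pic}(S)^G$ has rank 1, the Del Pezzo case \textbf{(4.2222)} is the only option with $r=4$.
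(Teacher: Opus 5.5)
Your proposal is correct and follows essentially the paper's route. The direct part uses the squaring map $x_i\mapsto x_i^2$ restricted to the diagonal intersection of two quadrics. The converse uses the building-data computation giving $\chi(\mathcal O_S)=1$, $K_S^2=4$ and $2K_S\sim f^*(-H)$. Your extra identification step via Pardini's uniqueness, $\Pic(S)^G\otimes\mathbb Q\cong\Pic(\PP^2)\otimes\mathbb Q$ and Blanc's list makes explicit what the paper leaves implicit.

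One small slip: for a character of weight $w$ one has $\deg L_\chi=\lceil w/2\rceil$, so ten of the $L_\chi$ are $\mathcal O(1)$ and five are $\mathcal O(2)$; the degree is not governed by the parity of $w$. This is harmless, since $L\cdot(L+K_{\PP^2})=-2$ for both degrees.
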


\begin{proof} Let us go back to the description of case {\bf (4.2222)} in Theorem \ref {thm:class2}. The $\mathbb Z_2^4$--cover $f: S\longrightarrow \PP^2$ arises as the quotient of the Del Pezzo surface $S$ of degree 4, given by equations \eqref {eq:dp4}, by the action of the group $G\cong \mathbb Z_2^4$ of projective transformations of $\PP^4$ given by \eqref {eq:GGG}. Consider the morphism
$$
\psi: (x_1 : x_2 : x_3 : x_4 : x_5)\in \PP^4 \longrightarrow (x_1^2 : x_2^2 : x_3^2: x_4^4 : x_5^5)\in \PP^4.
$$
This is a $\mathbb Z_2^4$--cover of $\PP^4$ ramified along the five coordinate hyperplanes $x_i=0$, with $1\leq i\leq 5$. The image of the Del Pezzo surface via $\psi$ is clearly a plane and the restriction of $\psi$ to $S$ is just the $\mathbb Z_2^4$--cover $f: S\longrightarrow \PP^2$. This yields that the branch curves of $f$ are exactly five lines in general position, and it follows that the branch data are the ones as in the statement of the proposition. 

Conversely, suppose we have a $\mathbb Z_2^4$--cover $f: S\longrightarrow \PP^2$ with the above branch data. Then using \eqref {eq:data}, one computes the building data, and it turns out that
$$
L_{1110}\sim L_{1011}\sim L_{0111}\sim L_{1101}\sim L_{1111}\sim \mathcal O_{\PP^2}(2)
$$
whereas all the remaining ten building data are isomorphic to $\mathcal O_{\PP^2}(1)$. An  application of \eqref {eq:chi} and \eqref {eq:K2} shows that $\chi(\mathcal O_S)=1$ and $K_S^2=4$. Moreover 
$$
2K_S\sim f^*(2K_{\PP^2}+ D)\sim f^*(\mathcal O_{\PP^2}(-1))
$$
whence it follows that $S$ is rational and it is actually a Del Pezzo surface of degree 4. \end{proof}

  \end{document}